 \title[Tubes in sub-Riemannian geometry]{Tubes in sub-Riemannian geometry and a Weyl's invariance result for curves in the Heisenberg groups}
\author{Tania Bossio}
\address{Tania Bossio:  University of Fribourg, Chemin du Musée 23, 1700 Fribourg, Switzerland}
\email{\href{mailto:tania.bossio@unifr.ch}{tania.bossio@unifr.ch}}
\author{Luca Rizzi}
\address{Luca Rizzi: SISSA, via Bonomea 265, 34136 Trieste, Italy}
\email{\href{mailto:lrizzi@sissa.it}{lrizzi@sissa.it}}
\author{Tommaso Rossi}
\address{Tommaso Rossi: Università degli Studi dell'Aquila, Via Vetoio, 67100 L'Aquila AQ, Italy}
\email{\href{tommaso.rossi1@univaq.it}{tommaso.rossi1@univaq.it}}
\date{\today}
\newcommand{\sr}{sub-Rieman\-nian }
\newcommand{\di}{{\sf d}}
\newcommand{\de}{\,{\rm d}}
\newcommand{\diverg}{{\rm div}}
\newcommand{\N}{\mathbb N}
\newcommand{\R}{\mathbb{R}}
\newcommand{\ann}{A}
\newcommand{\D}{\mathcal{D}}
\newcommand{\unitexp}{E^+}
\newcommand\restr[2]{{
  \left.\kern-\nulldelimiterspace 
  #1 
  \right|_{#2} 
  }}
\newcommand{\ds}{\delta}                               
\theoremstyle{plain}
\newtheorem{thm}{Theorem}[section]
\newtheorem{cor}[thm]{Corollary}
\newtheorem{lem}[thm]{Lemma}
\newtheorem*{lem*}{Lemma}
\newtheorem{prop}[thm]{Proposition}
\theoremstyle{definition}
\newtheorem{defn}[thm]{Definition}
\theoremstyle{remark}
\newtheorem{rmk}[thm]{Remark}
\newtheorem{example}[thm]{Example}
\crefname{thm}{Theorem}{Theorems}						
\crefname{theoremintro}{Theorem}{Theorems}				
\begin{document}
\begin{abstract}
The purpose of the paper is threefold: first, we prove optimal regularity results for the distance from $C^k$ submanifolds of general rank-varying sub-Riemannian structures. Then, we study the asymptotics of the volume of tubular neighbourhoods around such submanifolds. Finally, for the case of curves in the Heisenberg groups, we prove a Weyl's invariance result: the volume of small tubes around curves does not depend on the way the curve is isometrically embedded, but only on its Reeb angle. The proof does not need the computation of the actual volume of the tube, and it is new even for the three-dimensional Heisenberg group.
\end{abstract}

\maketitle

\section{Introduction}

Let $S$ be a closed submanifold of codimension $m$ in the $n$-dimensional Euclidean space $\R^n$. In \cite{Weyl-Tubes}, Weyl derived a formula for the volume of a tube of small radius $r$ around $S$:
\begin{equation}\label{eq:tube}
V(r) = \frac{\pi^{m/2}}{\Gamma\left(\tfrac{n}{2}+1\right)} \sum_{\substack{e=0 \\e \text{ even}}}^{n-m}\frac{k_{e}(S) r^{m+e}}{(m+2)(m+4)\cdots (m+e)}.
\end{equation}
There are two noteworthy features of this formula. Firstly, its \emph{regularity}: the function $V(r)$ is a polynomial, and not some more complicated function. In particular, it is real-analytic, even if $S$ is only supposed to be smooth, and only the terms $r^{m+e}$, with $e$ even, appear in the polynomial. Secondly, its \emph{invariant nature}: the coefficients $k_{e}(S)$ can be written in terms of the intrinsic curvature tensor of $S$, and thus formula \eqref{eq:tube} does not depend on the way $S$ is embedded in $\R^n$, but only on its inner metric structure induced by $\R^n$.

Inspired by Weyl's results, we study the volume of tubes in more general metric spaces, and more precisely in (sub-)Riemannian manifolds. This research is motivated by the recent works in this area \cite{F-StainerBalls,Balogh-Steiner,R-tub-neigh}, which focused on the case of tubes around  \emph{hypersurfaces} embedded in the Heisenberg group, and \cite{BB-Steiner3D}, for tubes around hypersurfaces in three-dimensional contact structures. (In this case, the corresponding volume asymtptotics are known as \emph{Steiner's formulas}.) See also \cite{BBL-balls} where, with a different spirit, the authors study the volume of small balls in three-dimensional contact structures.

We develop the theory for $C^k$ non-characteristic submanifolds of arbitrary codimension embedded in a general, possibly rank-varying, sub-Riemannian manifold. In the specific case of the Heisenberg groups, we prove that the volume of small tubes around curves does not depend on the way the curve is isometrically embedded, but only on its Reeb angle. This result extends the classical Euclidean one due to Weyl. Its proof is based on a symmetry argument of independent interest, new even in the Euclidean setting.

\subsection{Regularity of the distance and tubular neighbourhoods}


The study of tubes begins with the definition of what a tube is, and instrumental to this is the study of the regularity of the distance from a submanifold. This constitutes our first result, which continues the research line initiated in \cite{AF-normal,AF-Hessian,F-StainerBalls,R-tub-neigh} for $C^2$ hypersurfaces or submanifolds of the Heisenberg groups and \cite{AFM-step2} for $C^\infty$ hypersurfaces of some special  step $2$ Carnot groups. Our contribution extends all previous results to $C^k$ submanifolds of arbitrary codimension, embedded in general rank-varying sub-Riemannian manifolds, with optimal regularity in the $C^k$ class.

In the following, $\delta: M \to \R$ is the distance from an embedded submanifold $S\subset M$, $\pi:T^*M \to M$ is the cotangent bundle projection, $AS \subset T^*M$ is the bundle of covectors that vanish on $TS$ (the so-called \emph{annihilator bundle}), $E:AS \to M$ is the restriction of the sub-Riemannian exponential map to $AS$, $H  \in C^\infty(T^*M)$ denotes the sub-Riemannian Hamiltonian, while $\nabla$ represents the horizontal gradient of a $C^1$ function. We refer to \cref{sec:preliminaries} for precise definitions. The next result corresponds to \cref{thm:tubolarneigh}.

\begin{thm}[Sub-Riemannian tubular neighbourhoods and regularity of the distance] 
\label{thm:tubolarneigh-intro}
    Let $M$ be a sub-Riemannian manifold (smooth, without boundary, complete) and let $S \subset M$ be a non-characteristic submanifold of codimension $m\geq 1$ and of class $C^k$, with $k\geq 2$ (without boundary). Then, there exists a continuous function $\varepsilon : S \to \R_{>0}$, such that, letting
    \begin{equation}\label{eq:defV-intro}
V:=\left\{\lambda \in AS \,\Big|\, \sqrt{2H(\lambda)} < \varepsilon(\pi(\lambda))\right\},
    \end{equation}
    the following statements hold:
    \begin{enumerate}[(i)]
        \item \label{item:tubolarneigh1-intro} The restriction of the normal exponential map $E:V\to U:=E(V)$ is a $C^{k-1}$ diffeomorphism;
        \item \label{item:tubolarneigh2-intro} For all $p=E(\lambda) \in U$ there exists a unique minimizing geodesic $\gamma:[0,1]\to M$ from $S$, which is normal, and it is given by $\gamma_t = E(t\lambda)$. In particular, on $V$ it holds
        \begin{equation}
                \ds \circ E = \sqrt{2H};
        \end{equation}
        \item \label{item:tubolarneigh3-intro}  $\delta \in C^{k}(U\setminus S)$, with
                \begin{equation}
        \label{eq:eikonal-intro}
            \|\nabla \ds\|=1,\qquad\text{on } U\setminus S;
        \end{equation}
\item \label{item:tubolarneigh4-intro} $\delta^2\in C^{k}(U)$;
\item \label{item:tubolarneigh5-intro} Let $X,Y$ be smooth (or real-analytic, if $M$ is real-analytic) vector fields. Then the functions $\delta$, $X\delta$, $YX\delta$ are smooth (or real-analytic) along any minimizing geodesic from $S$ contained in $U\setminus S$.
 \end{enumerate}
\end{thm}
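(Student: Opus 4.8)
The plan is to run the classical tubular-neighbourhood argument in the Hamiltonian picture, exploiting that $AS$ is a $C^{k-1}$ submanifold of $T^*M$ of dimension $\dim M$ (the fibre $A_\sigma S$ is $m$-dimensional and $\dim S = n-m$), so that $E=\pi\circ e^{\vec H}$ restricted to $AS$ is automatically $C^{k-1}$. First I would compute $\diff E$ along the zero section. At $\sigma\in S$ the zero-section directions reproduce $T_\sigma S\hookrightarrow T_\sigma M$, while a vertical direction $\lambda\in A_\sigma S$ is sent to $\tfrac{\diff}{\diff s}\big|_0 E(s\lambda)=\lambda^\sharp$, the horizontal vector metrically dual to $\lambda$. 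Writing the non-characteristic hypothesis as $T_\sigma S+\D_\sigma=T_\sigma M$ (equivalently $A_\sigma S\cap \D_\sigma^\circ=\{0\}$, so $\lambda\mapsto\lambda^\sharp$ is injective on $A_\sigma S$), a one-line metric computation shows $T_\sigma S\cap\{\lambda^\sharp:\lambda\in A_\sigma S\}=\{0\}$: if $\lambda^\sharp\in T_\sigma S$ then $0=\langle\lambda,\lambda^\sharp\rangle=\|\lambda^\sharp\|^2$. Hence $\diff E_\sigma$ is a bijection and $E$ is a local $C^{k-1}$ diffeomorphism near each point of $S$. To upgrade to injectivity on a genuine neighbourhood and produce the continuous radius $\varepsilon$, I would run the standard compactness/contradiction argument: if injectivity failed on every sublevel $\{\sqrt{2H}<\varepsilon\}$, two distinct converging sequences of covectors with the same image would contradict the local diffeomorphism property at a zero-section point. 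Since $S$ need not be compact, I would carry this out locally over a countable cover and patch the local radii into a continuous $\varepsilon:S\to\R_{>0}$ via a partition of unity, giving (i).

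For (ii), I would invoke the first-variation/transversality condition: any geodesic from $S$ realizing the distance must be normal with initial covector annihilating $TS$, i.e.\ lying in $AS$, and shortness rules out abnormal minimizers for a non-characteristic $S$. Conversely, for $p=E(\lambda)\in U$ the curve $\gamma_t=E(t\lambda)$ is a normal geodesic from $S$ of length $\sqrt{2H(\lambda)}$, and the diffeomorphism property forces every competitor to stay in $U$ (after possibly shrinking $\varepsilon$ to confine minimizers, using completeness) and hence to coincide with $\gamma$. Uniqueness and minimality give $\delta\circ E=\sqrt{2H}$ on $V$, and the eikonal identity $\|\nabla\delta\|=1$ follows since $\gamma$ is unit-speed with terminal covector $\diff\delta$.

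The genuine difficulty is the one-derivative gain in (iii)--(iv): a priori $\delta=\sqrt{2H}\circ E^{-1}$ and $\delta^2=2H\circ E^{-1}$ are only $C^{k-1}$, yet the claim is $C^k$. I would recover the extra derivative by an envelope (Foote-type) argument. For $p\in U\setminus S$ the minimizing geodesic to the foot point $P(p)\in S$ is unique, normal and conjugate-point-free, so $p$ avoids the cut locus of $P(p)$ and the squared distance $\Phi(p,q)=\di(p,q)^2$ is smooth near $(p,P(p))$. The foot point $P(p)=\mathrm{argmin}_{q\in S}\Phi(p,q)$ is characterized by $\partial_q\Phi(p,P(p))=0$, the second $q$-variation is nondegenerate on the tube by the diffeomorphism property, so the implicit function theorem yields $P\in C^{k-1}$ (the $C^k$ regularity of $S$ drops by one under a $q$-derivative). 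The key cancellation is then $\diff_p(\delta^2)(p)=\partial_p\Phi(p,P(p))+\partial_q\Phi(p,P(p))\cdot\diff P(p)=\partial_p\Phi(p,P(p))$, the second term vanishing at the critical point; as $\partial_p\Phi$ is smooth and $P$ is $C^{k-1}$, the differential of $\delta^2$ is $C^{k-1}$, hence $\delta^2\in C^k(U)$ — including across $S$, where $\partial_p\Phi(p,p)=0$. Away from $S$, $\delta=\sqrt{\delta^2}$ with $\delta^2>0$ is then $C^k$, yielding (iii) and (iv). This envelope cancellation is the step I expect to be the main obstacle, the subtlety being to justify smoothness of $\Phi$ off the cut locus and the nondegeneracy of the second variation in the sub-Riemannian setting.

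Finally, for (v) I would observe that, although $\delta$, $X\delta$, $YX\delta$ carry only the finite transverse regularity coming from the $C^{k-1}$ initial data, their restrictions to a fixed minimizing geodesic $\gamma_t=E(t\lambda)$ are governed entirely by the smooth (or analytic) Hamiltonian flow. Indeed $\delta(\gamma_t)=t\sqrt{2H(\lambda)}$ is linear, $\diff\delta_{\gamma_t}=(2H(\lambda))^{-1/2}\,e^{t\vec H}\lambda$ is smooth/analytic in $t$ so that $X\delta(\gamma_t)$ is smooth/analytic, and the transverse second derivative $YX\delta$ along $\gamma$ is described by the linearization of the flow (the Jacobi equation, or the Riccati equation for the Hessian of $\delta$), whose coefficients are the smooth/analytic curvature of the structure evaluated along the smooth/analytic curve $\gamma$. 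Solutions of such ODEs inherit the regularity of their coefficients, giving the claimed smoothness/analyticity along geodesics.
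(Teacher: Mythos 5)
Your treatment of \cref{item:tubolarneigh1-intro,item:tubolarneigh2-intro} follows the same route as the paper (inverse function theorem at the zero section of $AS$ using the non-characteristic condition, exclusion of abnormal lifts satisfying the transversality condition, completeness to produce minimizers), and your sketch of \cref{item:tubolarneigh5-intro} via the Hamiltonian flow and its linearisation is also in the spirit of the paper's argument. The problem is the step you yourself identify as the crux, namely the gain of one derivative in \cref{item:tubolarneigh3-intro,item:tubolarneigh4-intro}. Your Foote-type envelope argument rests on the assertion that the two-point squared distance $\Phi(p,q)=\di(p,q)^2$ is smooth in a neighbourhood of $(p,P(p))$ because the minimizer is ``unique, normal and conjugate-point-free''. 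In a general (possibly rank-varying) sub-Riemannian manifold this is not justified: a minimizing geodesic from $S$, viewed as a point-to-point minimizer between $p$ and its foot point, may \emph{also} admit abnormal lifts (the paper explicitly records this possibility in the remark after \cref{prop:no_abn} — only the lift satisfying the transversality condition is forced to be normal). Smoothness of $\di^2$ near $(p,q)$ requires, besides uniqueness and non-conjugacy, the absence of abnormal minimizers between $p$ and $q$, and none of the hypotheses of the theorem rule these out; nor does the nondegeneracy of the submanifold exponential $E|_{AS}$ (absence of focal points) give you non-conjugacy for the point-to-point exponential. Consequently both the implicit-function-theorem regularity of the foot-point map $P$ and the cancellation $\diff_p(\delta^2)=\partial_p\Phi(p,P(p))$ are built on an unproved (and in general false) premise.

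The paper avoids exactly this obstruction: instead of joint smoothness of $\di^2$, it uses a one-sided touching lemma (\cref{lem:rifford}, from Rifford) which only requires a function $\phi$ differentiable at the single point $p$ with $\phi(p)=\di^2(p,q)$ and $\phi\le\di^2(q,\cdot)$ everywhere, and concludes that the final covector of the unique minimizer equals $\tfrac12 d_p\phi$. Taking $\phi=\ds^2$ (already known to be $C^{k-1}$, hence differentiable, from \cref{item:tubolarneigh1-intro,item:tubolarneigh2-intro}) yields the identity $d_p\ds^2=2\,\Phi(1,E^{-1}(p))$ of \cref{lem:diff_delta}; since the Hamiltonian flow is smooth and $E^{-1}$ is $C^{k-1}$, the differential of $\ds^2$ is $C^{k-1}$ and the extra derivative follows, with no reference to the regularity of the two-point distance. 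If you want to salvage your envelope argument you would have to either restrict to settings without abnormal minimizers or replace the smoothness of $\Phi(p,q)$ by such a one-sided differentiability statement — at which point you have essentially reconstructed the paper's proof.
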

\begin{rmk}[Necessity of assumptions]
Without adding further hypotheses, the assumption $k\geq2$ is necessary, already in the Euclidean case, see \cite{KrantzParks-distance,MR749908}. The non-characteristic assumption is also crucial: $\delta$ is not even Lipschitz in charts at characteristic points, see \cite{ACS_reg}.
\end{rmk}
\begin{rmk}[From $C^{k-1}$ to $C^k$]
 \cref{item:tubolarneigh1-intro,,item:tubolarneigh2-intro} immediately imply \cref{item:tubolarneigh3-intro,,item:tubolarneigh4-intro} with $C^{k-1}$ regularity. The improvement to $C^k$ regularity, $k\geq 2$, requires more work. This unexpected gain of regularity was observed in the Euclidean case in \cite{KrantzParks-distance,MR749908}.
\end{rmk}

\begin{rmk}[Regularity along geodesics]
\cref{item:tubolarneigh5-intro} states that $\delta$ and its derivatives up to order two are smooth (or real-analytic) along minimizing geodesics from the submanifold, while the latter is only $C^2$. This is a consequence of the action of the underlying Hamiltonian flow.
\end{rmk}

A version of \cref{thm:tubolarneigh-intro} appeared in \cite{Rossi22} when $S$ is closed and smooth ($k=\infty$), building on the codimension $1$ case treated in \cite{MR4117982}. With respect to those references, \cref{item:tubolarneigh3-intro,,item:tubolarneigh4-intro,,item:tubolarneigh5-intro} are new and require new arguments for $2\leq k<+\infty$.
In \cite{ACS_reg}, among other results, a local analogue of \cref{item:tubolarneigh3,item:tubolarneigh4} is obtained for smooth hypersurfaces ($k=\infty$, $m=1$), with different techniques. A version of \cref{thm:tubolarneigh-intro} for the Heisenberg groups was proven in \cite{R-tub-neigh}, using the explicit knowledge of minimizing geodesics.

For \emph{two-sided} hypersurfaces, it is customary to define an associated \emph{signed distance} $\delta_{\mathrm{sign}}: M \to \R$, see \cref{sec:twosided}, enjoying better regularity properties. We record in this case a variant of \cref{thm:tubolarneigh-intro},  corresponding to \cref{cor:twoside}.

\begin{cor}[The two-sided case]\label{cor:twoside-intro}
In the same setting of \cref{thm:tubolarneigh-intro}, assuming furthermore that $S$ is a two-sided non-characteristic hypersurface, \cref{item:tubolarneigh3-intro,item:tubolarneigh4-intro,item:tubolarneigh5-intro} hold on up to $S$ (i.e.\ on the whole $U$), replacing $\delta$ with $\delta_{\mathrm{sign}}$.
\end{cor}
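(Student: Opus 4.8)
The plan is to deduce the statement from \cref{thm:tubolarneigh-intro} by using the two-sided structure to pass from the unsigned distance $\delta$ to the signed distance $\delta_{\mathrm{sign}}$, the point being that the latter removes the corner that $\delta$ exhibits along $S$. Since $S$ is two-sided, the annihilator line bundle $AS$ is trivial and thus admits a global $C^{k-1}$ unit section $\mu$ (that is, $2H(\mu_q)=1$ for all $q\in S$), which fixes a co-orientation. Setting $\Phi(q,s):=E(s\mu_q)$, the map $\Phi$ is the composition of $E$ with the $C^{k-1}$ trivialization $(q,s)\mapsto s\mu_q$ of $AS$; since $\sqrt{2H(s\mu_q)}=|s|$, its domain is $\{(q,s):|s|<\varepsilon(q)\}$, and by \cref{item:tubolarneigh1-intro} it is a $C^{k-1}$ diffeomorphism onto $U$. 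By construction $U\setminus S$ splits as $U^+\sqcup U^-$ according to the sign of $s$, and $\delta_{\mathrm{sign}}:=\pm\delta$ on $U^\pm$, so that \cref{item:tubolarneigh2-intro} reads $\delta_{\mathrm{sign}}\circ\Phi(q,s)=s$.

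First I would dispose of the statements that do not require crossing $S$. Item~(iv) is immediate, since $\delta_{\mathrm{sign}}^2=\delta^2\in C^k(U)$ by \cref{item:tubolarneigh4-intro}. On $U\setminus S$ one has $\delta_{\mathrm{sign}}=\pm\delta\in C^k(U\setminus S)$ and $\|\nabla\delta_{\mathrm{sign}}\|=\|\nabla\delta\|=1$ by \cref{item:tubolarneigh3-intro}, so items~(iii) and~(v) hold there verbatim. Thus the entire content of the corollary is the regularity of $\delta_{\mathrm{sign}}$ up to and across $S$.

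The decisive observation is that, in the coordinates $\Phi$, the signed distance is the coordinate function $s$, which is real-analytic across $\{s=0\}=S$; this is exactly what fails for the unsigned distance $\delta=|s|\circ\Phi^{-1}$, whose corner along $S$ is now removed. Consequently the sole regularity deficit of $\delta_{\mathrm{sign}}$ stems from the $C^{k-1}$ regularity of $\Phi$, with no obstruction left at $s=0$. I would therefore re-run the gain-of-regularity argument underlying \cref{item:tubolarneigh3-intro,item:tubolarneigh5-intro}, but now uniformly up to $s=0$. Concretely, a normal geodesic issued from $S$ on the positive side, of the form $t\mapsto E(t\lambda)$ with $\lambda=s_0\mu_q$ and $s_0>0$, extends as $t$ ranges through a neighbourhood of the origin to the geodesic through $S$ reaching the negative side; along it $\delta_{\mathrm{sign}}=t\,s_0$ is linear, while $X\delta_{\mathrm{sign}}$ and $YX\delta_{\mathrm{sign}}$ are driven by the smooth Hamiltonian flow and are hence smooth (real-analytic in the analytic category), including at the crossing point $t=0\in S$. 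This yields item~(v) on all of $U$; combining the resulting flow-smoothness of the second-order data with the transverse $C^{k-1}$ dependence on the base point $q$ upgrades $\delta_{\mathrm{sign}}$ to $C^k(U)$, whereupon $\|\nabla\delta_{\mathrm{sign}}\|=1$ extends to $S$ by continuity (with $\nabla\delta_{\mathrm{sign}}|_q=\mu_q^\sharp$), giving item~(iii).

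The main obstacle is precisely this last upgrade across $S$: the exponential parametrization is intrinsically only $C^{k-1}$, so one must prove, uniformly as $s\to0$, that the top-order derivative of $\delta_{\mathrm{sign}}$ exists and is continuous at points of $S$. Here the two-sided hypothesis is essential rather than cosmetic: it supplies a globally consistent co-orientation, so that the horizontal unit field $\nabla\delta_{\mathrm{sign}}$ extends continuously through $S$ instead of reversing sign, and it guarantees that the signed arc-length parameter crosses $S$ smoothly. This is what lets the Riccati/Jacobi-type estimates from the proof of \cref{thm:tubolarneigh-intro} be carried out with initial data posed on $S$ and used on both sides simultaneously, thereby removing the single value $s=0$ at which the unsigned argument necessarily breaks down.
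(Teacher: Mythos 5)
Your reduction is the right one, and up to the point where you establish $\delta_{\mathrm{sign}}\circ\Phi(q,s)=s$ --- hence $\delta_{\mathrm{sign}}\in C^{k-1}(U)$ and the Eikonal equation on all of $U$ --- your argument matches the paper's (which uses the local chart $\lambda=z\,df$ for $AS$ instead of a global unit section; an immaterial difference). The gap is at the decisive upgrade from $C^{k-1}$ to $C^{k}$ across $S$. The inference you propose --- smoothness of the first- and second-order data along each geodesic, ``combined with'' the $C^{k-1}$ transverse dependence on the foot point, yields $C^{k}$ joint regularity --- is not a valid general principle: smoothness along the leaves of a foliation plus $C^{k-1}$ dependence on the leaf does not give $C^{k}$ regularity of a function. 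What actually closes the argument in the paper is an explicit composition formula: once $\delta_{\mathrm{sign}}\in C^{k-1}(U)$ is known, \cref{lem:diff_delta} extends by continuity to $t,s\in[0,1]$, and taking $t=1$, $s=0$ gives
\begin{equation}
d_p\delta_{\mathrm{sign}} \;=\; \Phi\Bigl(\delta_{\mathrm{sign}}(p),\; \nu|_{\pi\circ E^{-1}(p)}\Bigr),
\end{equation}
where $\nu=df/\sqrt{2H(df)}$ is the unit conormal on $S$ (your unit section). The right-hand side is the smooth map $\Phi$ evaluated on $C^{k-1}$ functions of $p$, with no division by $\delta(p)$ as in \eqref{eq:dds}; this is precisely what makes $d\delta_{\mathrm{sign}}$ of class $C^{k-1}$ on the whole $U$, i.e.\ $\delta_{\mathrm{sign}}\in C^{k}(U)$ including at $S$. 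You gesture at this mechanism but never write the identity, and instead appeal to ``Riccati/Jacobi-type estimates from the proof of \cref{thm:tubolarneigh-intro}'' which do not appear in that proof.

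A secondary issue is the order of your deductions: you derive item (v) on all of $U$ first and then use it to conclude $C^{k}$ regularity. But item (v) at the crossing time $t=0$ involves $YX\delta_{\mathrm{sign}}$ at points of $S$, i.e.\ second derivatives of $\delta_{\mathrm{sign}}$ on $S$, which for $k=2$ presupposes exactly the $C^{2}$-up-to-$S$ regularity you are trying to establish. The paper proves $\delta_{\mathrm{sign}}\in C^{k}(U)$ first, via the displayed identity, and only then extends item (v) up to $t=0$.
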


The function $\varepsilon: S\to \R$ of \cref{thm:tubolarneigh-intro} may tend to zero when $S$ is non-compact or $\mathrm{cl}(S)\setminus S$ is non-empty. If $S$ is bounded and \emph{extendible} (roughly speaking, it is a subset of a larger submanifold, see \cref{def:extendible} for a precise definition), the tubular neighbourhood of \cref{thm:tubolarneigh-intro} can be chosen to be uniform. See \cref{thm:uniformneigh}.

\begin{thm}[Uniform tubular neighbourhoods]\label{thm:uniformneigh-intro}
In the setting of \cref{thm:tubolarneigh-intro}, assume furthermore that $S$ is bounded and satisfies the extendibility property. Then, there exists $r_0=r_0(S)>0$ such that \cref{item:tubolarneigh1-intro,item:tubolarneigh2-intro,item:tubolarneigh3,item:tubolarneigh4-intro,item:tubolarneigh5-intro} hold for
\begin{equation}\label{eq:Vuniform-intro}
V:=\left\{ \lambda \in AS \,\Big|\, \sqrt{2H(\lambda)}< r_0 \right\}.
\end{equation}
\end{thm}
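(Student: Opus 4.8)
The plan is to deduce the uniform statement from the pointwise one by applying \cref{thm:tubolarneigh-intro} to a fixed \emph{compact} extension of $S$ and then using compactness to extract a single radius. First, I would combine boundedness with extendibility: since $S$ is bounded, $\mathrm{cl}(S)$ is compact, and by \cref{def:extendible} it is contained in a larger $C^k$ submanifold $\hat S \supset \mathrm{cl}(S)$ of the same codimension $m$. Being non-characteristic is an open condition, and $\mathrm{cl}(S)$ is compact and non-characteristic, so I can select a relatively compact open subset $\tilde S \subset \hat S$ which is itself a non-characteristic $C^k$ submanifold without boundary with $\mathrm{cl}(S) \subset \tilde S$. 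Since $S$ is relatively open in $\tilde S$, the two submanifolds have the same tangent spaces along $S$; hence the annihilator $AS$ is exactly the restriction $A\tilde S|_S$, an open subset of the total space $A\tilde S$.

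Next, I apply \cref{thm:tubolarneigh-intro} to $\tilde S$, producing a continuous $\tilde\varepsilon : \tilde S \to \R_{>0}$, the set $\tilde V = \{\lambda \in A\tilde S : \sqrt{2H(\lambda)} < \tilde\varepsilon(\pi(\lambda))\}$, and $\tilde U = E(\tilde V)$ on which all conclusions hold for the distance $\delta_{\tilde S}$. As $\mathrm{cl}(S)$ is a compact subset of $\tilde S$ and $\tilde\varepsilon$ is continuous and strictly positive, I set $r_0 := \min_{\mathrm{cl}(S)}\tilde\varepsilon > 0$, so that $\tilde\varepsilon(p) \ge r_0$ for every $p \in S$. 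With this choice, the set $V = \{\lambda \in AS : \sqrt{2H(\lambda)} < r_0\}$ of \eqref{eq:Vuniform-intro} is an open subset of $\tilde V$, and therefore $U := E(V) \subset \tilde U$ is open and $E|_V : V \to U$ is a $C^{k-1}$ diffeomorphism, being the restriction of $E|_{\tilde V}$ to an open set. This yields \cref{item:tubolarneigh1-intro}.

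The heart of the argument, which I expect to be the main (if short) obstacle, is the identity $\delta_S = \delta_{\tilde S}$ on $U$; once it is in place, everything else transfers. Fix $p = E(\lambda)$ with $\lambda \in V$. By construction $\pi(\lambda) \in S$, so the minimizing geodesic $t \mapsto E(t\lambda)$ from $\tilde S$ furnished by \cref{item:tubolarneigh2-intro} already \emph{starts at a point of} $S$; hence $\delta_S(p) \le \sqrt{2H(\lambda)} = \delta_{\tilde S}(p)$. Since $S \subset \tilde S$ forces $\delta_S \ge \delta_{\tilde S}$ everywhere, equality holds, and the unique minimizer from $\tilde S$ is the unique minimizer from $S$: any minimizer from $S$ is a curve issued from $\tilde S$ of the same minimal length, hence coincides with it by the uniqueness in \cref{item:tubolarneigh2-intro}. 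This proves \cref{item:tubolarneigh2-intro} and the identity $\delta_S \circ E = \sqrt{2H}$ on $V$.

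Finally, I would transfer the regularity. Using injectivity of $E|_{\tilde V}$ and the fact that $E$ carries the zero section of $\tilde V$ onto $\tilde S$, one checks $U \cap \tilde S = S$, whence $U \setminus S = U \cap (\tilde U \setminus \tilde S)$ is an open subset of $\tilde U \setminus \tilde S$. Consequently $\delta_S = \delta_{\tilde S}|_{U\setminus S} \in C^k(U\setminus S)$ with $\|\nabla \delta_S\| = 1$, giving \cref{item:tubolarneigh3-intro}; $\delta_S^2 = \delta_{\tilde S}^2|_U \in C^k(U)$, giving \cref{item:tubolarneigh4-intro}; and, since the minimizing geodesics from $S$ inside $U\setminus S$ are precisely the minimizing geodesics from $\tilde S$ inside $\tilde U\setminus\tilde S$, the regularity along geodesics of \cref{item:tubolarneigh5-intro} is inherited from the corresponding statement for $\tilde S$. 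This completes the plan.
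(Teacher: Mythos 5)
Your proposal is correct and follows essentially the same route as the paper: apply \cref{thm:tubolarneigh-intro} to the extension $\tilde S$, take $r_0$ as the minimum of $\tilde\varepsilon$ over the compact set $\mathrm{cl}(S)$, and transfer all conclusions via the identity $\delta_S=\delta_{\tilde S}$ on $E(V)$, which you justify exactly as the paper does (the minimizer from $\tilde S$ already starts on $S$). The only differences are cosmetic: your preliminary shrinking of the extension to a relatively compact piece is unnecessary (the minimum over $\mathrm{cl}(S)$ already exists), and the paper additionally takes the supremum over all admissible radii to make $r_0$ canonical, which the statement does not require.
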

\begin{rmk}\label{rmk:injrad-intro}
If there is $V$ of the form \eqref{eq:Vuniform-intro} such that \cref{item:tubolarneigh1-intro,item:tubolarneigh2-intro,item:tubolarneigh3-intro,item:tubolarneigh4-intro,item:tubolarneigh5-intro} of \cref{thm:tubolarneigh-intro} hold, we say that $S$ has \emph{positive injectivity radius}. Note that if $\mathrm{cl}(S)\supsetneq S$ in $M$, then $r_0$ can be smaller of the supremum of the $r>0$ s.t.\ $E$ is a diffeomorphism on $V_r=\{\lambda \in AS \mid \sqrt{2H(\lambda)}<r\}$.
\end{rmk}

For submanifolds with positive injectivity radius we can finally define tubes. 

\begin{figure}
 \includegraphics[scale=0.8]{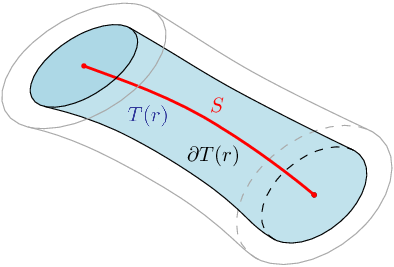} \qquad\qquad\qquad
 \includegraphics[scale=0.8]{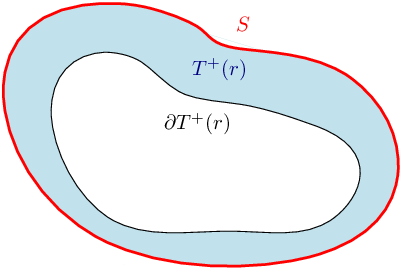} 
	\caption{Tubes and half-tubes.}
\end{figure}
\begin{defn}[Tubes and half-tubes]
\label{def:tube}
Let $S\subset M$ be a $C^2$ non-characteristic embedded submanifold of codimension $m\geq 1$, bounded and with the extendibility property. Let $r_0=r_0(S)>0$ be its injectivity radius. The \emph{tube of radius} $r\in (0,r_0)$ is the set
\begin{equation}
T(r):= E\left(\{\lambda \in A S \mid \sqrt{2H(\lambda)}<r\}\right).
\end{equation}
The \emph{tubular hypersurface at radius} $r \in (0,r_0)$ is the set 
\begin{equation}
\partial T(r):= E\left(\{\lambda \in A S \mid \sqrt{2H(\lambda)}=r\}\right).
\end{equation}
If $S$ is a two-sided hypersurface, the \emph{half-tubes} of radius $r \in (0,r_0)$ are the sets
    \begin{equation} \label{eq:def_omega_prime-intro}
    T^{\pm}(r):= E\left(\{\lambda \in A^{\pm} S \mid \sqrt{2H(\lambda)}<r\}\right).
    \end{equation}
In this case, the \emph{tubular hypersurface at radius} $r \in (0,r_0)$ is the set 
\begin{equation}
\partial T^{\pm}(r):= E\left(\{\lambda \in A^{\pm} S \mid \sqrt{2H(\lambda)}=r\}\right).
\end{equation}
\end{defn}
If $S$ is a closed manifold embedded in $M$, then $T(r)$ coincides with the set of points at distance smaller than $r$ from $S$. In general, however, $T(r) \subsetneq\{\delta < r\}$.

\subsection{The volume of tubes}

Our second main result is on the asymptotics of the volume of tubes. We were inspired, in particular, by the approach of \cite{Balogh-Steiner}, who introduced the concept of \emph{iterated divergences}, to account for the lack of a tensorial calculus well-adapted to the sub-Riemannian setting, akin to the one used in deriving the classical Weyl's tube formula \cite{Gray-Tubes}. Here, we improve and extend this approach to general codimension.

\begin{defn}[Iterated divergences]
\label{def:diverg}
Let $\omega$ be a smooth positive density on a smooth manifold, and $X$ be a vector field. The \emph{iterated divergences} are the functions $\diverg^k_\omega(X)$ such that
\begin{equation}
\mathscr{L}^k_X \omega = \diverg^k_\omega(X) \omega,\qquad k\in \N,
\end{equation}
provided that the Lie derivatives exists.
\end{defn}

Note that $\diverg^0_\omega(X) = 1$, and $\diverg^1_\omega(X) = \diverg_\omega(X)$ is the classical divergence of $X$. If they exist, the iterated divergences satisfy the following recursive relations:
\begin{equation}\label{eq:recursive_relation}
\diverg^{k+1}_\omega(X) = \diverg_\omega(\diverg^{k}_\omega(X)X)  = \diverg_\omega(X)\diverg^{k}_\omega(X) + X(\diverg^{k}_\omega(X)).
\end{equation}

In the next result, $A^1S = AS \cap \{2H = 1\}$ is the unit annihilator bundle, and $E_r: A^1S\to M$ is the exponential map at time $r$, namely $E_r(\lambda) = E(r\lambda)$. See \cref{thm:weyl}.
\begin{thm}[Weyl's tube formula]
\label{thm:weyl-intro}
 Let $M$ be a smooth (or real-analytic) \sr manifold, equipped with a smooth (or real-analytic) measure $\mu$. Let $S \subset M$ be a bounded non-characteristic embedded submanifold of codimension $m\geq 1$, of class $C^2$ (without boundary) and with the extendibility property. Let $r_0=r_0(S)>0$ be its injectivity radius. Then the volume of the tube $r\mapsto \mu(T(r))$ is smooth (or real-analytic) on 
    $[0,r_0)$. Furthermore, there exists a continuous density $\sigma_m$ on $A^1S$, defined by
    \begin{equation}\label{eq:sigmam-intro}
        \sigma_m := \lim_{r \to 0} \frac{E^*_r(\iota_{\nabla\delta} \mu)}{r^{m-1}} ,
    \end{equation}
    and continuous functions $w_{m}^{(j)}:A^1S \to \R$ defined for $j\in \N$ by
    \begin{equation}\label{eq:wmj-intro}
        w_{m}^{(j)} := \lim_{r\to 0} \diverg^{j}_{\mu/\delta^{m-1}}(\nabla\delta) \circ E_r,
    \end{equation}
    such that $\mu(T(r))$ has the following Taylor expansion at $r=0$:
    \begin{equation}
    \label{eq:expansion-intro}
         \mu(T(r))= \sum_{\substack{k\geq m \\ k-m \text{ even}}}  \frac{1}{k(k-m)!}\left(\int_{A^1 S} w_m^{(k-m)} \,\de\sigma_m \right) r^k,
    \end{equation}
    where $\de\sigma_m$ denotes the measure induced by $\sigma_m$.
    \end{thm}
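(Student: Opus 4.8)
The plan is to reduce $\mu(T(r))$ to a one-parameter family of flux integrals over the unit annihilator bundle $A^1 S$, and then to differentiate this family repeatedly in $r$ by exploiting the eikonal equation and the flow of $\nabla\delta$. First I would use \cref{thm:tubolarneigh-intro} and \cref{thm:uniformneigh-intro} (where boundedness and extendibility of $S$ guarantee $r_0>0$) to transport everything to $A^1 S$. Since $E$ is a diffeomorphism and $\delta\circ E=\sqrt{2H}$, the tube is $T(s)=U\cap\{\delta<s\}$, and the identity $\diff\delta\wedge\iota_{\nabla\delta}\mu=(\nabla\delta(\delta))\,\mu=\mu$, valid precisely because $\|\nabla\delta\|^2=\nabla\delta(\delta)=1$, splits the pulled-back measure as $E^*\mu=\diff s\wedge E^*(\iota_{\nabla\delta}\mu)$, with $s=\delta\circ E$ the radial coordinate. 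Fubini then gives
\[
\mu(T(r))=\int_0^r\Big(\int_{A^1 S}E_s^*(\iota_{\nabla\delta}\mu)\Big)\,\diff s .
\]
Because $\delta\circ E_s=s$, I pass to the renormalised density $\tilde\mu:=\mu/\delta^{m-1}$, for which $E_s^*(\iota_{\nabla\delta}\mu)=s^{m-1}E_s^*(\iota_{\nabla\delta}\tilde\mu)$; setting $\psi(s):=\int_{A^1 S}E_s^*(\iota_{\nabla\delta}\tilde\mu)$ turns this into $\mu(T(r))=\int_0^r s^{m-1}\psi(s)\,\diff s$. The renormalisation is natural: a direct computation gives $\diverg_{\tilde\mu}(\nabla\delta)=\diverg_\mu(\nabla\delta)-(m-1)/\delta$, which cancels the singular part of the mean curvature of $\partial T(s)$ as $s\to0$.

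Next I would compute all derivatives of $\psi$. Writing $\omega:=\iota_{\nabla\delta}\tilde\mu$ and letting $\Phi_t$ be the flow of $\nabla\delta$, whose integral curves are the unit-speed geodesics from $S$, the relation $E_s=\Phi_{s-s_0}\circ E_{s_0}$ yields $\tfrac{\diff^j}{\diff s^j}E_s^*\omega=E_s^*(\mathscr{L}^j_{\nabla\delta}\omega)$. Since $\iota_{\nabla\delta}$ and $\mathscr{L}_{\nabla\delta}$ commute, one has $\mathscr{L}_{\nabla\delta}\omega=\iota_{\nabla\delta}(\mathscr{L}_{\nabla\delta}\tilde\mu)=\diverg_{\tilde\mu}(\nabla\delta)\,\omega$, and iterating with the recursion \eqref{eq:recursive_relation} gives $\mathscr{L}^j_{\nabla\delta}\omega=\diverg^j_{\tilde\mu}(\nabla\delta)\,\omega$. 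Hence $\psi^{(j)}(s)=\int_{A^1 S}(\diverg^j_{\tilde\mu}(\nabla\delta)\circ E_s)\,E_s^*\omega$, so that, granting the limits \eqref{eq:sigmam-intro} and \eqref{eq:wmj-intro}, the Taylor coefficients of $\psi$ at $0$ are $\psi^{(j)}(0)=\int_{A^1 S}w_m^{(j)}\,\de\sigma_m$. Integrating $s^{m-1}\psi(s)$ term by term and relabelling $k=m+j$ reproduces \eqref{eq:expansion-intro}, a priori over all $k\geq m$.

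The \emph{main obstacle} is to prove that $\sigma_m=\lim_{s\to0}E_s^*\omega$ and $w_m^{(j)}=\lim_{s\to0}\diverg^j_{\tilde\mu}(\nabla\delta)\circ E_s$ exist as continuous objects on $A^1 S$, and that $\psi$ — hence $r\mapsto\mu(T(r))$ — is genuinely smooth (real-analytic) up to $s=0$, even though $S$ is only $C^2$. Here I would lean on \cref{item:tubolarneigh5-intro}: along each geodesic $s\mapsto E_s(\lambda)$ the functions $\delta$, $X\delta$, $YX\delta$ are smooth (real-analytic), so $\diverg_\mu(\nabla\delta)\circ E_s$, a combination of such second-order derivatives, is smooth in $s$; the collapse of $E_s$ along the $(m-1)$-sphere fibres produces the singular term $(m-1)/s$, whose subtraction in $\diverg_{\tilde\mu}(\nabla\delta)$ leaves a quantity extending smoothly to $s=0$ with value $w_m^{(j)}(\lambda)$. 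Continuity in $\lambda$, the legitimacy of differentiating $\psi$ under the integral sign, and — in the real-analytic case — uniform Cauchy estimates for the $s$-derivatives are then established on the compact portion of $S$ afforded by boundedness and extendibility. This uniform control is the technical heart of the statement.

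Finally, to remove the odd powers of $r$ I would use the antipodal symmetry $\lambda\mapsto-\lambda$ of the fibres of $A^1 S$. The leading density $\sigma_m$ is even, while $w_m^{(1)}$, being the renormalised trace of the sub-Riemannian shape operator of $S$ in the conormal direction $\lambda$, is odd. Expanding $\diverg^j_{\tilde\mu}(\nabla\delta)$ along geodesics through a Riccati-type recursion presents $w_m^{(j)}$ as a sum of terms, each built from $p$ factors of the shape operator (odd, weight $1$) and $q$ curvature factors (even, weight $2$) with $p+2q=j$; every such term has parity $(-1)^p=(-1)^j$. Thus $w_m^{(j)}(-\lambda)=(-1)^j w_m^{(j)}(\lambda)$, and invariance of $\sigma_m$ forces $\int_{A^1 S}w_m^{(j)}\,\de\sigma_m=(-1)^j\int_{A^1 S}w_m^{(j)}\,\de\sigma_m=0$ whenever $j=k-m$ is odd, leaving exactly \eqref{eq:expansion-intro}. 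Identifying the correct sub-Riemannian shape operator and curvature along geodesics from $S$, so as to make this weight grading rigorous in the rank-varying setting, is the geometric input underlying this last step.
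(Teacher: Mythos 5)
Your overall architecture --- disintegrate the volume over the radius, identify the radial derivatives of the flux density $E_s^*(\iota_{\nabla\delta}\mu)$ with the iterated divergences via $\mathscr{L}^j_{\nabla\delta}$, and kill the odd coefficients by the antipodal symmetry $\lambda\mapsto-\lambda$ of the fibres --- is the same as the paper's, which carries out exactly this computation in polar coordinates $(x,\varrho,u)$ on the annihilator bundle. But the step you yourself flag as the ``main obstacle'' is left unresolved, and it is the actual content of the theorem. \cref{item:tubolarneigh5-intro} of \cref{thm:tubolarneigh-intro} only gives, for each \emph{fixed} $\lambda$, smoothness of $s\mapsto YX\delta(E_s(\lambda))$ on the \emph{open} segment $s\in(0,r_0)$; it provides neither the extension of $\diverg^{j}_{\mu/\delta^{m-1}}(\nabla\delta)\circ E_s$ through $s=0$ (which is what makes the limits \eqref{eq:sigmam-intro} and \eqref{eq:wmj-intro} exist), nor continuity in $\lambda$, nor the uniform bounds needed to differentiate $\psi$ under the integral sign or, in the analytic case, to sum the Taylor series. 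Your appeal to ``uniform Cauchy estimates\dots established on the compact portion of $S$'' is precisely the assertion that has to be proved, not a tool. (A related circularity: for $\nabla\delta$ only $C^1$, the objects $\mathscr{L}^j_{\nabla\delta}\omega$ with $j\geq 2$ are not classically defined; their existence along the flow is again part of what must be established.)

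The paper's mechanism for all of this is a single identity that your proposal does not use: the reparametrization $\Phi(1,t\lambda)=t\,\Phi(t,\lambda)$ of the Hamiltonian flow. In graph coordinates it turns the radial variable into the \emph{time} of the flow, $E(x,\varrho u)=\pi\circ\Phi\left(\varrho,\,\cdot\,\right)$ with base point and covector depending only on $(x,u)$, so that $(x,\varrho,u)\mapsto E(x,\varrho u)$ is smooth (or real-analytic) in $\varrho$ for \emph{all} $\varrho\in\R$, including $\varrho\leq 0$, with joint continuity of every $\varrho$-derivative and locally uniform analytic bounds coming from \cref{lem:ham_flow_analytic} --- even though $S$ is only $C^2$. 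The Jacobian identity $J^\varrho(x,u)=\varrho^m J(x,\varrho u)$ plus an iterated-integral representation then shows that the renormalized integrand extends smoothly through $\varrho=0$, which simultaneously yields the existence and continuity of $\sigma_m$ and $w_m^{(j)}$, the regularity of $r\mapsto\mu(T(r))$, and the parity: the vanishing of the odd terms is the elementary chain-rule identity $\partial_\varrho^{j}\big|_0\,[Jf](x,\varrho u)=(-1)^{j}\,\partial_\varrho^{j}\big|_0\,[Jf](x,-\varrho u)$, i.e.\ exactly your claimed $w^{(j)}_m(-\lambda)=(-1)^j w^{(j)}_m(\lambda)$ --- no Riccati/shape-operator weight grading is needed, and making one rigorous in the rank-varying $C^2$ setting would be a separate (and harder) undertaking. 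Without the reparametrization identity or an equivalent device, your argument has a genuine gap at its central step.
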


\begin{rmk}[On the regularity]
In this generality, one cannot expect $\mu(T(r))$ to be a polynomial in $r$, as it happens for Euclidean tubes. However by the above theorem, as soon as the ambient structure is smooth (resp.\ real-analytic), the volume of tubes is smooth (resp.\ real-analytic) even if the corresponding submanifold $S$ is just $C^2$. Note that the real-analyticity of the volume of the tube around smooth hypersurfaces ($k=\infty$, $m=1$) in the three-dimensional Heisenberg group was obtained in \cite{Balogh-Steiner} as a consequence of the explicit computation of all coefficients in the Taylor expansion. Our proof uses different ideas.
\end{rmk}

If $S$ is a two-sided hypersurface, and one considers its (positive) half-tube $T^+(r)$, the density $\sigma_1$ and the functions $w_1^{(j)}$ of \cref{thm:weyl-intro} can be identified, respectively, with the sub-Riemannian perimeter measure $\sigma$ and the iterated divergences $\diverg^j_\mu(\nabla\delta_{\mathrm{sign}})$ on $S$. In this setting, we generalize the Steiner's formula obtained in \cite{Balogh-Steiner}.  See \cref{thm:Steiner}.

   \begin{thm}[Steiner's tube formula]\label{thm:Steiner-intro}
   In the setting of \cref{thm:weyl-intro}, assume furthermore that $S$ is a two-sided hypersurface. Then the volume of the half-tube $r \mapsto \mu(T^+(r))$ is smooth (or real-analytic) on $[0,r_0)$.
 Moreover, $\mu(T^+(r))$ has the following Taylor expansion at $r=0$:
     \begin{equation} \label{eq:Steiner_formula-intro}
        \mu(T^+(r))=\sum_{k\geq 1 }  \frac{1}{k!}\left(\int_{S} \diverg^{k-1}(\nabla\delta_{\mathrm{sign}}) \de \sigma\right) r^k,
    \end{equation}
    where $\mathrm{d}\sigma$ denotes the \sr perimeter measure on the hypersurface $S$.
   \end{thm}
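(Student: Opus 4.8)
The plan is to run the computation underlying \cref{thm:weyl-intro} verbatim, specialised to codimension $m=1$ and with the annihilator bundle $AS$ replaced by its positive component $A^+S$, which is exactly the set of conormals generating the half-tube $T^+(r)$. Recall that the proof of \cref{thm:weyl-intro} pulls $\mu$ back through the exponential diffeomorphism of \cref{thm:tubolarneigh-intro} and decomposes the integral along the radial variable $s=\sqrt{2H}$: since the normal geodesics from $S$ are the gradient flow lines of $\ds$ (by the eikonal equation \eqref{eq:eikonal-intro} and \cref{item:tubolarneigh2-intro}), the map $E_s$ is the time-$s$ flow of $\nabla\ds$, and the evolution of the induced area density is governed by $\diverg_\mu(\nabla\ds)$, whose iterates produce the coefficients \eqref{eq:wmj-intro} through the recursion \eqref{eq:recursive_relation}. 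For a two-sided hypersurface one has the disjoint decompositions $T(r)=T^+(r)\sqcup T^-(r)$ and $A^1S=A^{1,+}S\sqcup A^{1,-}S$ (up to $\mu$-null sets), where $A^{1,+}S:=A^1S\cap A^+S$; performing the same argument over $A^{1,+}S$ alone yields, since $s^{m-1}\equiv1$ for $m=1$, the intermediate identity
\begin{equation}
\mu(T^+(r))=\int_0^r\!\!\int_{A^{1,+}S} f(s,\lambda)\,\de\sigma_1\,\de s,\qquad f(s,\lambda)=\sum_{j\geq0}\frac{w_1^{(j)}(\lambda)}{j!}\,s^j.
\end{equation}
Integrating term by term and setting $k=j+1$ gives the expansion \eqref{eq:Steiner_formula-intro}. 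Crucially, on the opposite sheet $A^{1,-}S$ one has $\nabla\ds=-\nabla\ds_{\mathrm{sign}}$, whence $\diverg^j_\mu(\nabla\ds)=(-1)^j\diverg^j_\mu(\nabla\ds_{\mathrm{sign}})$ by \eqref{eq:recursive_relation}; summing the two sheets annihilates the odd-$j$ terms---this is the origin of the ``$k-m$ even'' restriction in \cref{thm:weyl-intro}---whereas restricting to the single sheet $A^{1,+}S$ retains all powers $r^k$ with $k\geq1$.

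It remains to identify the two objects in this expansion with their hypersurface counterparts under the canonical diffeomorphism $A^{1,+}S\cong S$, $p\mapsto\lambda_p^+$, sending each point to its positive unit conormal. For the density, as $r\to0$ one has $E_r\to\iota_S$ and $\nabla\ds\to\nabla\ds_{\mathrm{sign}}|_S=:\nu_H$, the horizontal unit normal pointing to the positive side; since the rescaling in \eqref{eq:sigmam-intro} is trivial for $m=1$, this gives $\sigma_1=\iota_{\nu_H}\mu|_S$, which is precisely the \sr perimeter measure $\sigma$. For the functions $w_1^{(j)}$, note that $\mu/\ds^{m-1}=\mu$ for $m=1$ and that $\ds=\ds_{\mathrm{sign}}$ on the positive half; by \cref{cor:twoside-intro} the signed distance is $C^k$ up to $S$, and, most importantly, by \cref{item:tubolarneigh5-intro} the vector field $\nabla\ds_{\mathrm{sign}}$ and all its iterated divergences are smooth (resp.\ real-analytic) along each geodesic leaving $S$, so that $\diverg^j_\mu(\nabla\ds_{\mathrm{sign}})$ is well-defined and continuous up to $S$ for every $j$, even though $S$ is only $C^2$. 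Passing to the limit along $r\mapsto E_r(\lambda)$ then gives $w_1^{(j)}(\lambda)=\diverg^j_\mu(\nabla\ds_{\mathrm{sign}})(\pi(\lambda))$, i.e.\ the iterated divergence on $S$.

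Finally, the smoothness (resp.\ real-analyticity) of $r\mapsto\mu(T^+(r))$ on $[0,r_0)$ follows from the intermediate identity, since $f(s,\lambda)$ is smooth (resp.\ analytic) in $s$ uniformly in $\lambda\in A^{1,+}S$ and integration in $s$ preserves this regularity; this is the half-tube analogue of the regularity already established in \cref{thm:weyl-intro}. I expect the main difficulty to lie in the two identifications of the second paragraph: proving that the limiting density \eqref{eq:sigmam-intro} is exactly the \sr perimeter measure, and justifying that all iterated divergences $\diverg^j_\mu(\nabla\ds_{\mathrm{sign}})$ extend continuously up to the merely $C^2$ hypersurface $S$---the latter being the content of the regularity-along-geodesics statement \cref{item:tubolarneigh5-intro}, on which the whole term-by-term expansion ultimately rests.
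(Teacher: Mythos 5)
Your proposal is correct and follows essentially the same route as the paper: restrict the computation of \cref{thm:weyl-intro} to the positive sheet $A^{1,+}S\cong S$ (where $\delta=\delta_{\mathrm{sign}}$ and the $r^{m-1}$ rescaling is trivial for $m=1$), observe that without the spherical integration the odd-order coefficients no longer cancel, and identify $\sigma_1$ with the sub-Riemannian perimeter measure and $w_1^{(j)}$ with $\diverg^j_\mu(\nabla\delta_{\mathrm{sign}})$ on $S$. The only caveat is that in the merely smooth case your intermediate identity $f(s,\lambda)=\sum_{j\ge 0} w_1^{(j)}(\lambda)\,s^j/j!$ must be read as an asymptotic Taylor expansion rather than a convergent equality, which is exactly how the paper handles it by working with the derivatives of $\mu(T^+(r))$ at $r=0$.
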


\subsection{Polynomial character of iterated divergences}

The coefficients in the Weyl's (or Steiner's) tube formulas of \cref{thm:weyl-intro,,thm:Steiner-intro} are written in terms of the iterated divergences of $\nabla \delta$. In \cref{sec:polynomials}, we investigate to what extent these coefficients are \emph{polynomial} functions in the derivatives of the distance $\delta$ from the submanifold. This is the sub-Riemannian analogue of the fact, instrumental in the proof of Weyl's Euclidean tube formula, that the coefficients of the tube formula can be expressed as polynomials of the second fundamental form of the submanifold. In this introduction we only present the following simplified result for left-invariant sub-Riemannian structures (see \cref{lem:iter_div_poly,cor:iter_div_poly_m,rmk:invariance}). 

\begin{thm}
\label{thm:div_poly-intro}
   Let $M$ be a Lie group equipped with a left-invariant \sr structure, and with a left-invariant measure $\mu$. Let $Y_1,\ldots,Y_n$ be a global left-invariant frame for $TM$. Then, for all $k,m \in \N$ with $m\geq 1$, there exists a polynomial function $P^k_m$ in $n+n^2+1$ variables and with real coefficients, homogeneous of degree $k$, such that for any $C^2$ non-characteristic submanifold $S\subset M$ of codimension $m\geq 1$, it holds
      \begin{equation}\label{eq:Pk-intro}
        \diverg^k_{\mu/\delta^{m-1}}(\nabla\delta)=P^k_m\left(\dots,Y_\alpha\delta,\dots,Y_{\alpha}Y_{\beta}\delta,\dots,\tfrac{m-1}{\delta}\right),\qquad \text{on }  U\setminus S,
    \end{equation}
    where in the variables $Y_\alpha\delta$, $Y_\alpha Y_\beta\delta$ the indices $\alpha,\beta$ run over the set $\{1,\dots,n\}$.
    
    For the case of hypersurfaces ($m=1$) then the polynomials $P^k_1$ do not depend on the last variable.  Furthermore, if $S$ is also two-sided, \eqref{eq:Pk-intro} is valid on the whole $U$, replacing $\delta$ with $\mathrm{\delta}_{\mathrm{sign}}$.
   \end{thm}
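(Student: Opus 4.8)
The plan is to establish the base case and then induct on $k$ using the recursive relation for iterated divergences. First I would set up notation: write the horizontal gradient $\nabla\delta$ in the left-invariant frame as $\nabla\delta=\sum_{\alpha}a_\alpha(Y\delta)\,Y_\alpha$, where the coefficients $a_\alpha$ are the components of the gradient with respect to the frame, which by definition of the horizontal gradient depend only on the quantities $Y_\beta\delta$ (indeed, $\nabla\delta=\sum_{i=1}^{r}(X_i\delta)X_i$ for an orthonormal horizontal frame $X_i$, and each $X_i$ is a constant-coefficient combination of the $Y_\alpha$, so $a_\alpha$ is a linear—hence polynomial—function of the $Y_\beta\delta$). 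I would first verify the $k=0$ case, where $\diverg^0_{\mu/\delta^{m-1}}(\nabla\delta)=1=P^0_m$, trivially a homogeneous degree-zero polynomial.

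\smallskip

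For the inductive step I would use the recursion \eqref{eq:recursive_relation} adapted to the weighted density $\omega_m:=\mu/\delta^{m-1}$: namely
\begin{equation*}
\diverg^{k+1}_{\omega_m}(\nabla\delta)=\diverg_{\omega_m}(\nabla\delta)\,\diverg^{k}_{\omega_m}(\nabla\delta)+\nabla\delta\bigl(\diverg^{k}_{\omega_m}(\nabla\delta)\bigr).
\end{equation*}
The key computation is to express the weighted divergence $\diverg_{\omega_m}(\nabla\delta)$ as a polynomial in the listed variables. Since $\omega_m=\delta^{-(m-1)}\mu$, one has $\diverg_{\omega_m}(X)=\diverg_\mu(X)-(m-1)\,\delta^{-1}X\delta$ for any vector field $X$; applied to $X=\nabla\delta$ and using that $\mu$ is left-invariant (so $\diverg_\mu(Y_\alpha)$ is a constant $c_\alpha$ given by the structure constants), I get
\begin{equation*}
\diverg_{\omega_m}(\nabla\delta)=\sum_\alpha\bigl(c_\alpha\,a_\alpha(Y\delta)+Y_\alpha a_\alpha(Y\delta)\bigr)-\tfrac{m-1}{\delta}\,\nabla\delta(\delta),
\end{equation*}
where $Y_\alpha a_\alpha$ is a linear combination of the $Y_\alpha Y_\beta\delta$ (by the chain rule, since $a_\alpha$ is linear in the $Y_\beta\delta$) and $\nabla\delta(\delta)=\sum_\beta a_\beta(Y\delta)\,Y_\beta\delta$. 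Thus $\diverg_{\omega_m}(\nabla\delta)$ is a polynomial in the variables $Y_\alpha\delta$, $Y_\alpha Y_\beta\delta$, and $\tfrac{m-1}{\delta}$, homogeneous of degree $1$ if we assign weight $1$ to each of $Y_\alpha\delta$, $\tfrac{m-1}{\delta}$ and weight $1$ to each $Y_\alpha Y_\beta\delta$ (so that a single ``derivative'' counts as degree one, consistent with the eikonal scaling).

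\smallskip

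Assuming inductively that $\diverg^{k}_{\omega_m}(\nabla\delta)=P^k_m(\dots)$ is a homogeneous polynomial of degree $k$ in these variables, the first term on the right-hand side of the recursion is a product of a degree-$1$ and a degree-$k$ polynomial, hence homogeneous of degree $k+1$. For the second term, $\nabla\delta\bigl(P^k_m\bigr)=\sum_\beta a_\beta(Y\delta)\,Y_\beta\bigl(P^k_m\bigr)$; applying $Y_\beta$ to each argument of $P^k_m$ via the chain rule produces the third-order terms $Y_\beta Y_\alpha\delta$ and $Y_\beta Y_\alpha Y_{\beta'}\delta$, and the derivative of $\tfrac{m-1}{\delta}$ gives $-\tfrac{m-1}{\delta^2}Y_\beta\delta=-\tfrac{1}{m-1}(\tfrac{m-1}{\delta})^2Y_\beta\delta$. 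The main obstacle, and the point that requires care, is precisely that the chain rule introduces third- (and higher-) order derivatives $Y_\beta Y_\alpha\delta$, $Y_{\gamma}Y_\beta Y_\alpha\delta$ that are not among the allowed variables: I must show these all collapse back into the first- and second-order variables. This is where the eikonal equation and its differential consequences enter. Differentiating $\|\nabla\delta\|^2=2H(\diff\delta)=1$ (from \cref{item:tubolarneigh3-intro}) gives an algebraic relation among the first and second derivatives of $\delta$, and differentiating it repeatedly, together with the fact that $\delta$ solves the eikonal equation so that its second-order jet along the Hamiltonian flow is governed by a Riccati-type equation (this is the content of \cref{item:tubolarneigh5-intro}, that $X\delta$ and $YX\delta$ are smooth along geodesics and satisfy a closed ODE system), must be used to rewrite every third-order derivative as a polynomial in the first- and second-order ones. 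I would isolate this as a lemma: the eikonal equation $H(\diff\delta)=1/2$ and its $Y_\beta$-derivatives yield identities expressing $\sum_\alpha a_\alpha\,Y_\beta Y_\alpha\delta$ in terms of lower-order data, and iterating controls the spurious higher jets. Once this reduction lemma is in hand, both terms of the recursion are homogeneous polynomials of degree $k+1$ in the allowed variables, closing the induction and yielding $P^{k+1}_m$. The statements for $m=1$ (where the $\tfrac{m-1}{\delta}=0$ variable drops) and for the two-sided case (where $\delta_{\mathrm{sign}}$ is $C^k$ up to $S$ by \cref{cor:twoside-intro}, so the same identities hold on all of $U$) then follow immediately.
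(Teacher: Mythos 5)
Your proposal follows essentially the same route as the paper's proof (\cref{lem:iter_div_poly} and \cref{cor:iter_div_poly_m}): induction on $k$ via the recursive relation \eqref{eq:recursive_relation}, with the twice-differentiated eikonal equation used to dispose of the third-order derivatives of $\delta$ generated by $\nabla\delta(P^k)$. The only organizational difference is that the paper first treats the unweighted divergences $\diverg^k_\mu(\nabla\delta)$ and then passes to $\mu/\delta^{m-1}$ by the Leibniz-type identity $\diverg^k_{\mu/\delta^{m-1}}(\nabla\delta)=\sum_{j=0}^k c_{j,k,m}\,\diverg^{k-j}_\mu(\nabla\delta)/\delta^j$, whereas you induct directly on the weighted divergence; both work, though your route forces you to invoke the eikonal equation $\nabla\delta(\delta)=1$ already in the degree count (as written, $\tfrac{m-1}{\delta}\nabla\delta(\delta)$ with $\nabla\delta(\delta)=\sum_\beta a_\beta\,Y_\beta\delta$ has degree $3$, not $1$). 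One substantive caveat: your reduction lemma is overstated. It is not true that \emph{every} third-order derivative $Y_\gamma Y_\beta Y_\alpha\delta$ collapses to second-order data; what $Y_iY_j\bigl(\|\nabla\delta\|^2\bigr)=0$ yields is only the identity $(X_h\delta)(Y_iY_jX_h\delta)=-(Y_iX_h\delta)(Y_jX_h\delta)$, i.e.\ the contraction of the third jet with $\nabla\delta$. This suffices because, after commuting $X_h$ past $Y_iY_j$ using the (constant) structure functions of the frame, these contractions are exactly the third-order terms occurring in $\nabla\delta(Y_iY_j\delta)$; no iteration, Riccati equation, or appeal to \cref{item:tubolarneigh5-intro} is needed. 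If you state and prove the reduction lemma in this contracted form, the induction closes as you describe.
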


\cref{thm:div_poly-intro} includes all Carnot groups. In such a setting one can hope to find a finite set of invariant generators for the algebra of the polynomials $P^k_m$. We do this for surfaces in the three-dimensional Heisenberg group in \cref{prop:forsedatogliere}, that we report here.

\begin{prop}
\label{prop:forsedatogliere-intro}
Let $\mathbb{H}$ be the three-dimensional Heisenberg group, equipped with a left-invariant measure $\mu$. Then, there exists polynomials $Q^k_{m}$ with real coefficients and in $5$ variables, such that for any $C^2$ non-characteristic submanifold $S\subset M$ with codimension $m \in \{1,2\}$ it holds
\begin{equation}
\label{eq:lasola-intro}
\diverg^{k}_{\mu/\delta^{m-1}}(\nabla\delta)= Q^k_m\left(F_1,F_2,F_3,F_4,\tfrac{m-1}{\delta}\right),\qquad \text{on } U\setminus S,
\end{equation}
where, for any given left-invariant and oriented orthonormal frame $\{X_1,X_2\}$, and letting $X_0$ be the Reeb vector field, we define the functions $F_i : U\setminus S \to \R$ by
    \begin{align}
        F_1 & :=X_1X_1\delta+X_2X_2\delta,  & F_2 & :=-(X_2\delta)(X_1 X_0\delta)+ (X_1\delta)(X_2 X_0\delta),\\ 
        F_3 & :=X_0\delta,  & F_4 & :=X_0 X_0\delta.
    \end{align}
   (The functions $F_1,\dots,F_4$ do not depend on the choice of the frame.) 
 Furthermore, if $S$ is a two-sided surface, \eqref{eq:lasola-intro} for $m=1$ holds on the whole $U$, replacing $\delta$ with $\delta_{\mathrm{sign}}$.
\end{prop}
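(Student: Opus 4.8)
\emph{The plan is to} bootstrap from \cref{thm:div_poly-intro} and then eliminate all but four of the jet variables using the contact structure of $\mathbb{H}$. Fix the left-invariant orthonormal frame $\{X_1,X_2\}$ together with the Reeb field $X_0$, so that $[X_1,X_2]=X_0$ and $X_0$ is central. By \cref{thm:div_poly-intro} there is a universal polynomial $P^k_m$ with $\diverg^k_{\mu/\delta^{m-1}}(\nabla\delta)=P^k_m(\dots,X_\alpha\delta,\dots,X_\alpha X_\beta\delta,\dots,\tfrac{m-1}{\delta})$. The commutation relations at once remove redundant second-order variables: $X_0X_i\delta=X_iX_0\delta$ for $i=1,2$, and $X_1X_2\delta-X_2X_1\delta=X_0\delta$, so that $P^k_m$ may be viewed as a polynomial in the independent components of the $2$-jet of $\delta$.

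I would next extract the constraints coming from the eikonal equation. By \cref{thm:tubolarneigh-intro}, on $U\setminus S$ one has $(X_1\delta)^2+(X_2\delta)^2=1$; write $v=(X_1\delta,X_2\delta)$ for the unit horizontal gradient and $w=(X_1X_0\delta,X_2X_0\delta)$. Differentiating the eikonal identity along $X_1,X_2$ gives $\sum_i (X_i\delta)(X_jX_i\delta)=0$ for $j=1,2$, i.e.\ $v$ lies in the kernel of the horizontal Hessian $(X_jX_i\delta)_{i,j=1,2}$; differentiating along $X_0$ gives $v\cdot w=0$. These relations, together with $[X_1,X_2]=X_0$, cut out an explicit algebraic subvariety $\mathcal{V}$ of jet space to which the $2$-jet of $\delta$ is confined. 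A dimension count is reassuring: jet space (modulo the commutator identities) has dimension $9$, the four constraints above leave $\dim\mathcal{V}=5$, and $SO(2)$ acts with $1$-dimensional orbits, so the quotient is $4$-dimensional, matching the number of functions $F_1,\dots,F_4$.

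The core of the proof is a normal-form/invariance step. Since $\mathbb{H}$ is nilpotent, left-invariant vector fields are $\mu$-divergence free, and rotating $\{X_1,X_2\}$ by a constant angle yields an admissible frame with the same structure constants and the same (vanishing) divergences; as the polynomial $P^k_m$ of \cref{thm:div_poly-intro} is built solely from these data, it is literally unchanged, and comparing the two resulting expressions for the frame-independent left-hand side shows that $P^k_m$ is $SO(2)$-invariant on $\mathcal{V}$. One checks directly that $F_1,\dots,F_4$ are also $SO(2)$-invariant: $F_1$ is the trace of the horizontal Hessian, $F_2=(X_1\delta)(X_2X_0\delta)-(X_2\delta)(X_1X_0\delta)$ is the two-dimensional determinant of $(v\mid w)$, and $F_3=X_0\delta$, $F_4=X_0X_0\delta$ are intrinsic Reeb derivatives. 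The key computation is that these four invariants separate the $SO(2)$-orbits in $\mathcal{V}$: rotating $v$ to $(1,0)$ and invoking the constraints above forces every jet component into an explicit affine function of $F_1,\dots,F_4$, namely $X_1\delta=1$, $X_2\delta=0$, $X_0\delta=F_3$, $X_1X_1\delta=X_2X_1\delta=X_1X_0\delta=0$, $X_2X_2\delta=F_1$, $X_1X_2\delta=F_3$, $X_2X_0\delta=F_2$ and $X_0X_0\delta=F_4$.

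Finally I would define $Q^k_m(f_1,f_2,f_3,f_4,t)$ to be $P^k_m$ evaluated at this normal-form jet, with $\tfrac{m-1}{\delta}$ replaced by $t$; this is a genuine polynomial because the normal-form components are affine in the $f_i$. At an arbitrary point $p$ one applies the rotation carrying $v(p)$ to $(1,0)$: since the $SO(2)$-invariance of $P^k_m$ is a polynomial identity on jet space, it persists for this point-dependent angle, and as the $F_i$ are rotation-invariant we obtain $\diverg^k_{\mu/\delta^{m-1}}(\nabla\delta)=Q^k_m(F_1,F_2,F_3,F_4,\tfrac{m-1}{\delta})$ on $U\setminus S$. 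For $m=1$ the last entry vanishes, so $Q^k_1$ does not depend on its fifth argument; the two-sided surface case follows verbatim, replacing $\delta$ by $\delta_{\mathrm{sign}}$, whose eikonal equation holds on all of $U$ by \cref{cor:twoside-intro}. I expect the one genuinely delicate point to be precisely this last passage, from the constant-angle invariance of $P^k_m$ to its use with a point-dependent rotation: the resolution is to treat the invariance purely as an algebraic identity on the $2$-jet, never as a change of differentiating frame, so that no derivatives of the rotation angle ever enter.
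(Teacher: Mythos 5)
Your proposal is correct and follows essentially the same route as the paper's proof: the constant-coefficient polynomial representation of the iterated divergences in the left-invariant frame, the eikonal constraints ($A\xi=0$ and $v^t\xi=0$ in the paper's notation) together with the commutator relation $X_1X_2\delta-X_2X_1\delta=X_0\delta$, the $\mathrm{SO}(2)$-invariance of $P^k_m$, and the pointwise rotation of the $2$-jet into a normal form whose entries are exactly $F_1,\dots,F_4$ (yours agrees with the paper's up to the sign of $F_2$). The only cosmetic difference is that the paper derives the invariance from the isometry group of $\mathbb{H}$ fixing the origin, whereas you verify directly that a rotated left-invariant orthonormal frame has the same structure constants and vanishing divergences, hence the same polynomial; your closing remark that the invariance is used as a pointwise algebraic identity on the $2$-jet, so that no derivatives of the point-dependent rotation enter, is precisely the (implicit) logic of the paper.
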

\begin{rmk}[Independence on the frame]
The functions $F_1,\dots,F_4$ do not depend on the choice of the frame: they can be written in terms of the sub-Laplacian $\Delta$, the Reeb field $X_0$, the symplectic structure $J$, and the horizontal gradient $\nabla$:
\begin{equation}
F_1  = \Delta\delta, \qquad F_2  =  g(\nabla\delta, J\nabla X_0\delta), \qquad F_3  = X_0\delta, \qquad F_4  = X_0 X_0 \delta.
\end{equation}
\end{rmk}

\begin{rmk}
For two-sided hypersurfaces, an immediate consequence of \cref{prop:forsedatogliere-intro} paired with \cref{thm:Steiner-intro} is the following formula for the volume of the corresponding half-tube:
\begin{equation}
        \mu(T^+(r))=\sum_{k\geq 1 }  \frac{1}{k!}\left(\int_{S} Q^{k}_1( F_1,F_2,F_3,F_4)\de \sigma\right) r^k,
\end{equation}
where $F_1,\dots,F_4$ are defined as above replacing $\delta$ with $\delta_{\mathrm{sign}}$. Compare with \cite{Balogh-Steiner}, where the authors expressed the integrands in terms of $5$ basic functional invariants: $F_1,\dots,F_4$ and, in addition, $F_5:= (X_0X_1\delta)^2+(X_0X_2\delta)^2$. However, by using the Eikonal equation and its derivative, one can show that $F_5 = F_2^2$. We stress that, in order to prove \cref{prop:forsedatogliere-intro}, we do not compute explicitly the iterated divergences.
\end{rmk}

The extension of \cref{prop:forsedatogliere-intro} to higher-dimensional Heisenberg groups seems difficult. This is due to the fact that the corresponding group of isometries is relatively smaller in higher dimensions. See the discussion at the end of \cref{sec:polynomials}.

\subsection{Weyl's invariance for tubes around curves of the Heisenberg groups}

One of the deep results in \cite{Weyl-Tubes} is that the coefficients appearing in the tube formula have intrinsic nature. This can be stated as follows: any isometric embedding of a given Riemannian manifold $S$ in $\R^n$ has the same tube volume for small enough radii. Broadly speaking, we call this result \emph{Weyl's invariance}. To our best knowledge, \emph{all} proofs of this fact pass through the computation of the Jacobian determinant of the normal exponential map, explicit spherical integration, and Gauss' formula (relating the second fundamental form of an Euclidean submanifold with its Riemannian curvature), in order to cast the coefficients in terms of the Riemannian curvature of $S$. See \cite{Gray-Tubes}. We remark the following fact about Weyl's invariance result:
\begin{itemize}
\item[-] with analogous \emph{computational proof}, it holds on all space forms (even though the volume of tubes is no longer a polynomial);
\item[-] it does not extend to a general ambient Riemannian manifold $M$. For instance, the volume of a tube around a point depends on where the point is in $M$.
\end{itemize}

In the \sr setting, it is not clear how an invariance-type result can be stated. Think, for example, at a never-horizontal curve in the three-dimensional Heisenberg group, $\gamma: (0,1) \to \mathbb{H}$. The induced inner metric on $\gamma$ is infinite between any pair of distinct points, so that from the intrinsic metric viewpoint $\gamma$ is just a disjoint uncountable union of points. Any pair $\gamma,\gamma'$ of such curves will be diffeomorphic and ``isometric'', but one can easily find examples with different Weyl's tube formula. Extending the analogy to non-characteristic surfaces, the induced structure is that of a regular foliation with one-dimensional leaves, that is the disjoint union of an uncountable number of flat one-dimensional submanifolds. In any case, intrinsic ``isometries'' of these submanifolds seem to have little relevance with the tube formula.

We obtain a Weyl's invariance result for curves in the $(2d+1)$-dimensional Heisenberg group $\mathbb{H}_{2d+1}$. It establishes that the volume of small sub-Riemannian tubes around a non-characteristic curve depends only on the Riemannian length of the curve and the so-called Reeb angle.

\begin{defn}[Reeb angle]
\label{def:reebangle}
Let $S \subset \mathbb{H}_{2d+1}$ be an embedded $C^2$ non-characteristic submanifold of codimension $m\geq 1$. Its \emph{Reeb angle} $\theta_S: S\to \R$ is:
\begin{equation}
\theta_S(q):= \sup_{W\in T_q S} \frac{g_R(W, X_0)}{\|W\|_{g_R}},
\end{equation}
where $g_R$ is the canonical Riemannian extension of the \sr metric on $\mathbb H_{2d+1}$  and $X_0$ is the Reeb vector field.
\end{defn}

We can now state the invariance result. The corresponding statement for curves in the Euclidean space is due to Hotelling \cite{Hotelling}, which motivated \cite{Weyl-Tubes}. We remark that this is the first time that such a result is obtained in sub-Riemannian geometry. (cf.\ \cref{thm:invariance}).

\begin{thm}[Sub-Riemannian Hotelling's theorem]\label{thm:invariance-intro}
Let $\gamma,\gamma':[0,L]\to \mathbb{H}_{2d+1}$, be non-characteristic $C^2$ curves, parametrized with unit Riemannian speed. Denote with $\Gamma,\Gamma' \subset \mathbb{H}_{2d+1}$ the corresponding embedded submanifold. Assume that $\theta_{\Gamma}(\gamma_t)=\theta_{\Gamma'}(\gamma'_t)$ for all $t\in [0,L]$. Then, there exists $\epsilon>0$ such that
\begin{equation}
\mu(T_{\Gamma}(r)) = \mu(T_{\Gamma'}(r)), \qquad \forall \, r \in[0,\epsilon),
\end{equation}
where $\mu$ denotes the Lebesgue measure of $\mathbb{H}_{2d+1}$.
\end{thm}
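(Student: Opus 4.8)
The plan is to exploit the fact that the tube volume, by \cref{thm:weyl-intro}, depends on $\Gamma$ only through the integrals of the iterated divergences $\diverg^{j}_{\mu/\delta^{m-1}}(\nabla\delta)$ against the density $\sigma_m$, and then to produce, for any two curves sharing the same Reeb angle, an isometry of $\mathbb{H}_{2d+1}$ mapping one tube onto the other \emph{locally along the curve}. Concretely, I would first reduce everything to a pointwise statement: the Taylor coefficients in \eqref{eq:expansion-intro} are integrals over $A^1\Gamma$ of quantities that, by \cref{thm:div_poly-intro} and \cref{prop:forsedatogliere-intro}, are polynomial functions of the jet of $\delta$ of order two along minimizing geodesics from $\Gamma$. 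By \cref{item:tubolarneigh5-intro} these jets are governed by the Hamiltonian flow, so the integrand at radius $r$ over the fibre at $\gamma_t$ is determined by the $2$-jet of $\Gamma$ at $\gamma_t$, i.e. by $\gamma_t$, $\dot\gamma_t$ and $\ddot\gamma_t$.

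Next I would set up the symmetry argument. The key point is that the isometry group of $\mathbb{H}_{2d+1}$ (left translations composed with the stabilizer of the origin, which contains a copy of $U(d)$ acting on the horizontal distribution and fixing the Reeb direction) acts transitively enough on pairs (point, unit tangent direction with prescribed Reeb angle). Given $\gamma$ and $\gamma'$ with $\theta_\Gamma(\gamma_t)=\theta_{\Gamma'}(\gamma'_t)$ for all $t$, I would construct, for each fixed $t_0$, an isometry $\Phi_{t_0}$ sending the $1$-jet of $\gamma$ at $t_0$ to that of $\gamma'$ at $t_0$: a left translation aligning the base points, followed by a rotation in $U(d)$ aligning the horizontal parts of the velocities (whose norms agree since both curves are unit-speed and share the Reeb angle, which fixes the split between horizontal and Reeb components). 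The crucial observation is that, because the structure is left-invariant and the rotational symmetry fixes the Reeb field, the contribution to the tube volume from a neighbourhood of $\gamma_{t_0}$ depends only on this $1$-jet (and not on $\ddot\gamma$), so that the \emph{integrands} match after applying $\Phi_{t_0}$.

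The main obstacle, and the conceptually new part, is precisely to show that the second-order data $\ddot\gamma$ does \emph{not} contribute to the integrated coefficients, even though it does enter the pointwise integrand via $F_1,\dots,F_4$. In the Euclidean Weyl/Hotelling theorem this is the statement that odd powers of the curvature integrate to zero and the even ones reorganize into intrinsic quantities; here there is no curvature tensor to appeal to. My plan is to handle this by a symmetry/parity argument on the annihilator fibre: at each $\gamma_t$, the fibre $A^1_{\gamma_t}\Gamma$ carries an action of the stabilizer subgroup fixing the $1$-jet of $\Gamma$ (rotations in the orthogonal complement of the horizontal velocity together with the symplectic structure $J$), and I would argue that integrating the $\ddot\gamma$-dependent terms over this fibre against $\de\sigma_m$ kills them by oddness, leaving only contributions determined by the Reeb angle. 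This is the step I expect to require the most care, as it must be made compatible with the explicit form of $\sigma_m$ in \eqref{eq:sigmam-intro} and with the recursive structure \eqref{eq:recursive_relation} of the iterated divergences.

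Finally, I would assemble the pieces: for each $r<\epsilon$ the isometries $\Phi_t$, together with the parity argument, show that $\int_{A^1\Gamma} w_m^{(k-m)}\de\sigma_m = \int_{A^1\Gamma'} w_m^{(k-m)}\de\sigma_m$ for every $k$, whence all Taylor coefficients in \eqref{eq:expansion-intro} agree and $\mu(T_\Gamma(r))=\mu(T_{\Gamma'}(r))$ on a common interval $[0,\epsilon)$, where $\epsilon$ is the minimum of the two injectivity radii. The uniformity of $\epsilon$ follows from \cref{thm:uniformneigh-intro}, since compact curves are bounded and extendible.
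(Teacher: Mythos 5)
Your overall architecture is close to the paper's: reduce the tube volume to a fibrewise integral along the curve, match the curves pointwise by sub-Riemannian isometries determined by the $1$-jet (which exist precisely because the Reeb angles agree, cf.\ \cref{lem:diffeo}), and then kill the residual dependence on the second-order data of the curve by a parity argument on the annihilator fibre. You have correctly isolated the crux: the isometry $\Phi_{t_0}$ only matches $1$-jets, while the fibrewise integrand sees how $A\Gamma$ sits inside $T^*M$ to first order, i.e.\ $\ddot\gamma$, and this discrepancy must integrate to zero.

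However, the specific symmetry you propose for that step does not work. The stabilizer in $\mathrm{U}(d)$ of a point together with a horizontal direction is (up to finite factors) $\mathrm{U}(d-1)$, which is \emph{trivial} for $d=1$; so for $\mathbb{H}_3$ your ``rotations in the orthogonal complement of the horizontal velocity'' give you no group to average over, and the oddness argument has nothing to act on. The symmetry that actually does the job in the paper is the antipodal involution $I(\lambda)=-\lambda$ on the cotangent fibre $T^*_q M$. This is emphatically \emph{not} induced by an isometry of $\mathbb{H}_{2d+1}$ (the map $(x,y,z)\mapsto(-x,-y,-z)$ is not a sub-Riemannian isometry), so one cannot invoke equivariance of $E$; instead one must check by hand that $I^*(E^*\mu)|_{T^*_qM}=(-1)^n (E^*\mu)|_{T^*_qM}$, which reduces to the explicit evenness of the Jacobian determinant of the Heisenberg exponential map in the fibre variables, and that the discrepancy field $\zeta$ (the vertical correction between $(\Phi^{-1*})_*\tilde V$ and a genuine lift of $\Phi_*V$ to $A\Gamma'$) satisfies $I_*\zeta=\zeta$. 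Without identifying this involution and verifying the evenness of the Jacobian, the ``integrate the $\ddot\gamma$-terms to zero by oddness'' step is not established. Two further, more minor, issues with your reduction: \cref{prop:forsedatogliere-intro} only covers $\mathbb{H}_3$ and codimension $m\le 2$, so it is unavailable for curves in $\mathbb{H}_{2d+1}$ with $d>1$ (codimension $2d$); and the paper avoids the Taylor-coefficient route altogether by proving the stronger statement that the fibrewise $1$-forms $\mu^r_\Gamma$ and $\phi^*\mu^r_{\Gamma'}$ agree for every fixed $r$, exploiting that $\dim\Gamma=1$ so that only a single interior product appears and the correction splits off linearly as $\iota_{\tilde V+\zeta}=\iota_{\tilde V}+\iota_\zeta$.
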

\begin{rmk}
We were not able to extend the same strategy of proof of \cref{thm:invariance-intro} to general codimension. We believe that more quantities akin to the concept of Reeb angle will affect the volume of tubes for general embedded submanifolds.
\end{rmk}

\subsection{Structure of the paper} In \cref{sec:preliminaries} we introduce the necessary preliminaries  in sub-Riemannian geometry. In \cref{sec:regularity} we study the regularity of the distance from submanifolds, proving \cref{thm:tubolarneigh-intro}. In \cref{sec:weyl} we prove Weyl's and Steiner's expansions for the volume of tubes, namely \cref{thm:weyl-intro,thm:Steiner-intro}. In \cref{sec:polynomials} we discuss the polynomial character of the iterated divergences, and the proof of \cref{thm:div_poly-intro,prop:forsedatogliere-intro}. Finally, in \cref{sec:invariance}, we prove our Weyl's invariance result for curves in the Heisenberg group, namely \cref{thm:invariance-intro}.

\subsection{Acknowledgements}

This project has received funding from (i) the European Research Council (ERC) under the European Union's Horizon 2020 research and innovation programme (grant agreement GEOSUB, No. 945655); (ii) the PRIN project ``Optimal transport: new challenges across analysis and geometry'' funded by the Italian Ministry of University and Research; (iii) the ANR-DFG project ``CoRoMo'' (ANR-22-CE92-0077-01). The authors also wish to thank Antonio Lerario for helpful discussions around Weyl's tube formula.

\section{Preliminaries}\label{sec:preliminaries}

We recall some basic notions of \sr geometry, cf.\ \cite{ABB-srgeom}.

\subsection{Sub-Riemannian structures}
Let $M$ be a smooth, connected $n$-dimensional manifold. A smooth \sr structure on $M$ is defined by a \emph{generating family} of $N\in\N$ global smooth vector fields $\{X_1,\ldots,X_N\}$. The latter defines a (possibly rank-varying) \emph{distribution} $\D$:
\begin{equation}
\label{eq:gen_frame}
\D_p:=\text{span}\{X_1(p),\ldots,X_N(p)\}\subseteq T_pM,\qquad\forall\,p\in M.
\end{equation}
We assume that the generating family is \emph{bracket-generating}, i.e.\ the Lie algebra of smooth vector fields generated by $\{X_1,\dots,X_N\}$, evaluated at the point $p$, is equal to $T_p M$, for all $p\in M$. The generating family induces a norm on the distribution at $p$: 
\begin{equation}
\label{eq:induced_norm}
\|v\|^2_p:=\inf\left\{\sum_{i=1}^Nu_i^2 \,\Big|\, \sum_{i=1}^Nu_iX_i(p)=v\right\},\qquad\forall\,v\in\D_p,
\end{equation}
which, in turn, defines an inner product $g_p$ on $\D_p$ by polarization. The manifold $M$, equipped with the above structure, is said to be a \emph{smooth \sr manifold}. We say that $M$ is a \emph{real-analytic \sr manifold} if $M$ is a real-analytic manifold and the vector fields of the generating family are real-analytic.

A curve $\gamma : [0,1] \to M$ is \emph{horizontal} if it is absolutely continuous and there exists a \emph{control} $u\in L^\infty([0,1],\R^N)$ such that
\begin{equation}\label{eq:control}
\dot\gamma_t=\sum_{i=1}^N u_i(t) X_i(\gamma_t), \qquad \text{for a.e.}\, t \in [0,1].
\end{equation}
The \emph{length} of a horizontal curve is defined as:
\begin{equation}
\ell(\gamma) := \int_0^1 \|\dot\gamma_t\|_{\gamma_t} \de t.
\end{equation}
Finally, the \emph{\sr distance} between any two points $p,q\in
 M$ is defined as
\begin{equation}\label{eq:infimo}
\di(p,q) := \inf\{\ell(\gamma) \mid  \gamma \text{ horizontal curve joining $p$ and $q$} \}.
\end{equation}
By the Chow-Rashevskii Theorem, the bracket-generating assumption ensures that the distance $\di$ is finite, continuous and it induces the same topology as the manifold one.

\subsection{Horizontal gradient}

For an open set $U\subset M$ and $C^1$ function $a: U \to \R$, its \emph{horizontal gradient} is the horizontal vector field $\nabla a$ such that
\begin{equation}
g(\nabla a,v) = da(v),\qquad \forall\, v \in \D_p,\,\, \forall\,p\in U.
\end{equation}
In terms of the generating family $\{X_1,\dots,X_N\}$ it holds
\begin{equation}
\nabla a = \sum_{i=1}^N (X_i a) X_i \qquad \text{and} \qquad \|\nabla a\| = \sqrt{\sum_{i=1}^N (X_i a)^2}.
\end{equation}
(See \cite[Appendix A]{MR4623954} for a proof in the rank-varying case.)


\subsection{Geodesics and Hamiltonian flow}
\label{sec:geod}

A \emph{geodesic} is a horizontal curve $\gamma :[0,1] \to M$, parametrized with constant speed, that is locally length-minimizing. The \emph{sub-Riemannian Hamiltonian} is the function $H : T^*M \to \R$, given by
\begin{equation}
\label{eq:Hamiltonian}
H(\lambda) := \frac{1}{2}\sum_{i=1}^N \langle \lambda, X_i \rangle^2, \qquad \lambda \in T^*M,
\end{equation}
where $\{X_1,\ldots,X_N\}$ is a generating family for the sub-Riemannian structure, and $\langle \lambda, \cdot \rangle $ denotes the action of the covector $\lambda$ on vectors. The \emph{Hamiltonian vector field} $\vec H$ on $T^*M$ is then defined by $\varsigma(\cdot,\vec H)=dH$, where $\varsigma\in\Lambda^2(T^*M)$ is the canonical symplectic form ($\varsigma=d\tau$, where $\tau \in \Lambda^1(T^*M)$ is the tautological form).

Solutions $\lambda : [0,1] \to T^*M$ of the \emph{Hamilton equations}
\begin{equation}\label{eq:Hamilton_eqs}
\dot{\lambda}_t = \vec{H}(\lambda_t),
\end{equation}
are called \emph{normal extremals} (which we always assume to be defined for all times, that is the case e.g.\ when $(M,\di)$ is complete). For $t\in \R$, we denote by $e^{t\vec H}:T^*M\to T^*M$ the \sr Hamiltonian flow at time $
t$. The projection of normal extremals $\gamma_t = \pi(\lambda_t)$ on $M$, where $\pi:T^*M\to M$ is the bundle projection, are geodesics, and are called \emph{normal geodesics}. If $\gamma$ is a normal geodesic with normal extremal $\lambda$, then  \eqref{eq:control} holds with controls $u_i(t) = \langle\lambda_t,X_i\rangle$, and its speed is $\| \dot\gamma \| = \sqrt{2H(\lambda)}$. In particular
\begin{equation}
\label{eq:speed}
\ell(\gamma|_{[0,t]}) = t \sqrt{2H(\lambda_0)},\qquad \forall\, t\in[0,1].
\end{equation} 
There is another class of locally length-minimizing curves, called \emph{abnormal geodesics}. To these curves $\gamma:[0,1]\to M$ correspond extremal lifts $\lambda: [0,1]\to T^*M$, which may not follow the Hamiltonian dynamics \eqref{eq:Hamilton_eqs}. 
Here, we only observe that for abnormal lifts it holds
\begin{equation}
\label{eq:abn}
\langle \lambda_t,\D_{\gamma_t}\rangle=0\quad \text{and} \quad \lambda_t\neq 0,\qquad \forall\, t\in[0,1] ,
\end{equation}
that is $H(\lambda_t)\equiv 0$. Note that a geodesic may be normal and abnormal at the same time. 

\subsection{Geodesics from a submanifold}


Consider a $C^1$ embedded submanifold $S \subset M$ (without boundary) of codimension $m\geq 1$. We define the \emph{distance from} $S$ as 
\begin{equation}
\label{eq:def_distance_S}
\ds: M\to [0,\infty),\qquad\ds(p):=\inf\left\{\di(p,q) \mid q\in  S\right\}.
\end{equation}
We say that a horizontal curve $\gamma:[0,1]\to M$ is a \emph{minimizing geodesic from} $S$ if it is a constant-speed length-minimizing curve, such that 
\begin{equation}
    \gamma_0\in S,\qquad \gamma_1 \in M\setminus S\qquad\text{and}\qquad \ell(\gamma)=\ds(\gamma_1).
\end{equation} 
If $\gamma:[0,1]\to M$ is a minimizing geodesic from $S$ there exists a corresponding extremal lift, $\lambda :[0,1]\to T^*M$, must satisfy the transversality conditions of \cite[Thm.\ 12.4]{AS-GeometricControl}, namely
\begin{equation}\label{eq:trcondition}
\langle \lambda_0, T_{\gamma_0} S \rangle=0.
\end{equation}
In other words, the initial covector $\lambda_0$ must belong to the \emph{annihilator bundle} 
\begin{equation}
AS := \{\lambda \in T^*M \mid \langle \lambda, T_{\pi(\lambda)} S\rangle = 0\} \subset T^*M.
\end{equation}

\begin{defn}[Non-characteristic submanifold]
    Let $S\subset M$ be a $C^1$ embedded submanifold of codimension $m\geq 1$. A point $q\in S$ is said to be \emph{non-characteristic} if
    \begin{equation}\label{eq:non-char}
        \D_q + T_qS =T_qM.
    \end{equation}
We say that $S$ is a \emph{non-characteristic submanifold} if it has no characteristic points.
\end{defn}

If $S$ is a non-characteristic submanifold without boundary, the curves that realize the distance from $S$ admit a unique normal lift satisfying the transversality condition \eqref{eq:trcondition}.

\begin{prop}\label{prop:no_abn}
 Let $M$ be a sub-Riemannian manifold and let $S \subset M$ be an embedded submanifold of codimension $m\geq 1$ and of class $C^1$. Let $\gamma : [0,1]\rightarrow M$ be a minimizing geodesic from $S$. Assume that $\gamma_0$ is non-characteristic and it is not a boundary point of $S$. Then, $\gamma$ is the projection of a unique normal extremal lift with initial covector $\lambda \in AS$.
\end{prop}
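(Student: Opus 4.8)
The plan is to invoke the Pontryagin Maximum Principle to produce an extremal lift $\lambda$ of $\gamma$ satisfying the transversality condition \eqref{eq:trcondition}, and then to establish two things separately: that such a lift cannot be abnormal (hence is normal), and that it is unique. Since $\gamma_0$ is not a boundary point of $S$, the full equality transversality condition \eqref{eq:trcondition} applies, placing $\lambda_0$ in the annihilator bundle, $\langle\lambda_0, T_{\gamma_0}S\rangle=0$. Once $\lambda$ is known to be normal, the entire extremal is recovered from its initial datum as $\lambda_t = e^{t\vec H}(\lambda_0)$, so both the existence and the uniqueness statements reduce to assertions about the single covector $\lambda_0$.

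First I would rule out the abnormal case. Suppose the lift furnished by the Maximum Principle were abnormal. By \eqref{eq:abn} this forces $\langle\lambda_0,\D_{\gamma_0}\rangle=0$ together with $\lambda_0\neq 0$. Combined with the transversality condition $\langle\lambda_0,T_{\gamma_0}S\rangle=0$, we find that $\lambda_0$ annihilates $\D_{\gamma_0}+T_{\gamma_0}S$, which by the non-characteristic assumption \eqref{eq:non-char} equals $T_{\gamma_0}M$. Hence $\lambda_0=0$, contradicting $\lambda_0\neq 0$. Therefore the lift is normal, which settles existence.

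For uniqueness, suppose $\lambda^1,\lambda^2$ are two normal extremal lifts of $\gamma$ with $\lambda^1_0,\lambda^2_0\in AS$, and set $\eta:=\lambda^1_0-\lambda^2_0$, a covector at $\gamma_0$. For a normal extremal the velocity is recovered through the controls $u_i=\langle\lambda_\cdot,X_i\rangle$ via \eqref{eq:control}; evaluating at $t=0$ and using that both lifts project onto the same curve gives $\sum_i\langle\eta,X_i(\gamma_0)\rangle X_i(\gamma_0)=\dot\gamma_0-\dot\gamma_0=0$. Pairing this vanishing horizontal vector with $\eta$ yields $\sum_i\langle\eta,X_i(\gamma_0)\rangle^2 = 2H(\eta)=0$, whence $\langle\eta,X_i(\gamma_0)\rangle=0$ for every $i$, i.e.\ $\langle\eta,\D_{\gamma_0}\rangle=0$. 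Since also $\langle\eta,T_{\gamma_0}S\rangle=0$, the non-characteristic condition \eqref{eq:non-char} forces $\eta=0$, so $\lambda^1_0=\lambda^2_0$ and thus $\lambda^1=\lambda^2$ by the Hamiltonian flow.

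The conceptual core of the argument, and the only place the hypotheses are genuinely used, is the interplay between transversality and non-characteristicity: transversality confines $\lambda_0$ to the annihilator of $T_{\gamma_0}S$, while both the abnormal condition and the velocity-matching identity confine the relevant covector to the annihilator of $\D_{\gamma_0}$, and \eqref{eq:non-char} is exactly the statement that these two annihilators meet only at the origin. I expect the step requiring the most care to be the correct invocation of the Maximum Principle and the verification that the equality transversality condition \eqref{eq:trcondition}, rather than a one-sided inequality, holds; this is precisely where the hypothesis that $\gamma_0$ is an interior (non-boundary) point of $S$ is needed.
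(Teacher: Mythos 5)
Your proposal is correct and follows essentially the same route as the paper: obtain a transversal extremal lift from the Maximum Principle, rule out abnormality by noting that \eqref{eq:abn} together with \eqref{eq:trcondition} would force $\lambda_0$ to annihilate $\D_{\gamma_0}+T_{\gamma_0}S=T_{\gamma_0}M$, and deduce uniqueness from the same annihilation argument applied to the difference of two lifts. Your uniqueness step merely unpacks the paper's terse remark that ``the difference would be an abnormal lift satisfying \eqref{eq:trcondition}'' into an explicit pointwise computation at $t=0$ via the matching of controls $u_i=\langle\lambda_0,X_i\rangle$, which is a welcome clarification but not a different argument.
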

\begin{proof}
Let $\lambda:[0,1]\to M$ be an extremal lift of $\gamma$ satisfying the transversality condition \eqref{eq:trcondition}. The abnormal condition \eqref{eq:abn} at $t=0$ contradicts  \eqref{eq:trcondition}, so that no such lift can be abnormal. It follows that the lift is normal, namely there is $\lambda \in AS$ such that $\gamma_t = \pi\circ e^{t\vec{H}}(\lambda)$. If there were two such normal lifts, their difference would be an abnormal lift satisfying \eqref{eq:trcondition}, which as explained previously is a contradiction.
\end{proof}
\begin{rmk}
The curve $\gamma$ is also a minimizing geodesic for $\di$, and as such it \emph{can} admit abnormal lifts. As just proved, these will not satisfy the transversality condition.
\end{rmk}

We define the \emph{normal exponential map} as the restriction of the Hamiltonian flow to the annihilator bundle, namely
\begin{equation}
\label{eq:normal_exponential_map}
    E:\ann S\to M,\qquad E(\lambda):= \pi \circ e^{\vec{H}}(\lambda).
\end{equation}
We also define the \emph{unit annihilator bundle} $A^1 S \subset AS$ as the sphere bundle
\begin{equation}
    A^1S := \{\lambda \in AS \mid 2H(\lambda) = 1\}.
\end{equation}
Finally, for $r>0$ we let $E_r : A^1 S \to M$ be the \emph{normal exponential map at radius $r$}:
\begin{equation}
    E_r :A^1 S\to M, \qquad E_r(\lambda) := E(r\lambda) = \pi\circ e^{r\vec{H}}
(\lambda).
\end{equation}

\section{Regularity of the distance and tubular neighbourhoods}\label{sec:regularity}

In this section, we prove \cref{thm:tubolarneigh-intro}. We recall here its statement. 

\begin{thm}
\label{thm:tubolarneigh}
    Let $M$ be a sub-Riemannian manifold (smooth, without boundary, complete) and let $S \subset M$ be a non-characteristic submanifold of codimension $m\geq 1$ and of class $C^k$, with $k\geq 2$ (without boundary). Then, there exists a continuous function $\varepsilon : S \to \R_{>0}$, such that, letting
    \begin{equation}\label{eq:defV}
V:=\left\{\lambda \in AS \,\Big|\, \sqrt{2H(\lambda)} < \varepsilon(\pi(\lambda))\right\},
    \end{equation}
    the following statements hold:
    \begin{enumerate}[(i)]
        \item \label{item:tubolarneigh1} The restriction of the normal exponential map $E:V\to U:=E(V)$ is a $C^{k-1}$ diffeomorphism;
        \item \label{item:tubolarneigh2} For all $p=E(\lambda) \in U$ there exists a unique minimizing geodesic $\gamma:[0,1]\to M$ from $S$, which is normal, and it is given by $\gamma_t = E(t\lambda)$. In particular, on $V$ it holds
        \begin{equation}
                \ds \circ E = \sqrt{2H};
        \end{equation}
        \item \label{item:tubolarneigh3}  $\delta \in C^{k}(U\setminus S)$, with
                \begin{equation}
        \label{eq:eikonal}
            \|\nabla \ds\|=1,\qquad\text{on } U\setminus S;
        \end{equation}
\item \label{item:tubolarneigh4} $\delta^2\in C^{k}(U)$;
\item \label{item:tubolarneigh5} Let $X,Y$ be smooth (or real-analytic, if $M$ is real-analytic) vector fields. Then the functions $\delta$, $X\delta$, $YX\delta$ are smooth (or real-analytic) along any minimizing geodesic from $S$ contained in $U\setminus S$.
 \end{enumerate}
\end{thm}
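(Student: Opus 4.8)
\textbf{Step 1 (item \eqref{item:tubolarneigh1}).} The plan is to first pin down the differential of $E=\pi\circ e^{\vec H}$ along the zero section. Since $S$ is $C^k$, the conormal bundle $\ann S\subset T^*M$ is a $C^{k-1}$ submanifold, and as $e^{\vec H}$ is smooth the restriction $E|_{\ann S}$ is $C^{k-1}$. I would compute $dE$ at a zero covector $0_q$, $q\in S$: under the splitting $T_{0_q}(\ann S)\cong T_qS\oplus \ann_q S$, the differential is the identity on $T_qS$ and the musical map $\lambda\mapsto\sum_i\langle\lambda,X_i\rangle X_i$ on the fibre $\ann_q S$. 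A short linear-algebra computation (passing to annihilators) shows that the image $T_qS+\{\sum_i\langle\lambda,X_i\rangle X_i:\lambda\in\ann_qS\}$ is all of $T_qM$ precisely when $T_qS+\D_q=T_qM$, i.e.\ exactly the non-characteristic condition. Hence $E$ is a local $C^{k-1}$ diffeomorphism near every $0_q$. Upgrading local injectivity to injectivity on a full tube is the delicate point when $S$ is non-compact: I would cover $S$ by sets on which $E$ is injective on a tube of some positive radius, then patch these local radii into a single continuous positive $\varepsilon:S\to\R_{>0}$, ruling out colliding geodesics by a contradiction-by-sequences argument (a failure would produce two distinct covectors with the same image whose basepoints, after localization, violate the local diffeomorphism property).

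\textbf{Step 2 (item \eqref{item:tubolarneigh2}).} For $\lambda\in V$ the curve $\gamma_t=E(t\lambda)=\pi(e^{t\vec H}(\lambda))$ is a normal geodesic issuing from $S$ with $\ell(\gamma)=\sqrt{2H(\lambda)}$ by \eqref{eq:speed}, so $\delta(E(\lambda))\le\sqrt{2H(\lambda)}$. For the converse, given $p=E(\lambda)\in U$ with $\delta(p)$ small, completeness yields a minimizing geodesic from $S$ to $p$; by \cref{prop:no_abn} and the non-characteristic hypothesis it is normal with initial covector $\mu\in\ann S$ satisfying $\sqrt{2H(\mu)}=\delta(p)$. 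After shrinking $\varepsilon$ so that such minimizers have covector in $V$ (a further compactness argument, coupled to Step 1), injectivity forces $E(\mu)=p=E(\lambda)$, hence $\mu=\lambda$. This gives $\delta\circ E=\sqrt{2H}$ on $V$, uniqueness of the minimizer, and its identification $\gamma_t=E(t\lambda)$.

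\textbf{Step 3 (items \eqref{item:tubolarneigh3}--\eqref{item:tubolarneigh4}).} From Step 2, $\delta^2/2=H\circ E^{-1}\in C^{k-1}(U)$, and the gain to $C^k$ is the crux. I would exploit that $\ann S$ is a Lagrangian submanifold of $(T^*M,\varsigma)$ (conormal bundles are Lagrangian) and that $e^{\vec H}$ is a symplectomorphism, so that $\delta^2/2$ is a generating function for the Lagrangian $e^{\vec H}(\ann S)$. Concretely, a first-variation / Hamilton--Jacobi computation yields the pointwise identity
\[
 d(\delta^2/2)_p = e^{\vec H}(E^{-1}(p)),\qquad p\in U,
\]
i.e.\ the differential of $\delta^2/2$ is the final covector of the geodesic reaching $p$. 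The right-hand side is a composition of the smooth flow $e^{\vec H}$ with the $C^{k-1}$ map $E^{-1}$, hence a $C^{k-1}$ section of $T^*M$; a function whose differential has $C^{k-1}$ coefficients is $C^k$, so $\delta^2\in C^k(U)$ and $\delta=\sqrt{\delta^2}\in C^k(U\setminus S)$. The eikonal equation follows from $d\delta_p=e^{\vec H}(E^{-1}(p))/\delta(p)$: indeed $\|\nabla\delta\|^2=2H(d\delta)=2H(E^{-1}(p))/\delta^2=1$, using conservation of energy and $2H(E^{-1}(p))=\delta^2(p)$.

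\textbf{Step 4 (item \eqref{item:tubolarneigh5}) and the main obstacle.} Fix a minimizing geodesic $\gamma_t=E(t\lambda)$ in $U\setminus S$. Then $\delta(\gamma_t)=t\sqrt{2H(\lambda)}$ is smooth in $t$; $X\delta(\gamma_t)=\langle e^{t\vec H}(\lambda),X(\gamma_t)\rangle/(t\sqrt{2H(\lambda)})$ is smooth (resp.\ analytic) in $t$ since $t\mapsto e^{t\vec H}(\lambda)$ is smooth (analytic) and the denominator is nonzero for $t>0$; and $YX\delta(\gamma_t)$ I would express through the linearization of the Hamiltonian flow (Jacobi fields) along $\gamma$, which solves a linear ODE with smooth (analytic) coefficients. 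The key point is that, along a \emph{fixed} geodesic, the only data depending on the mere $C^k$-regularity of $S$ enter as constant initial conditions, while the $t$-evolution is governed entirely by the smooth (analytic) Hamiltonian dynamics, giving smoothness (analyticity) in $t$ despite $\delta$ being only $C^k$ as a function on $U$. I expect the main obstacle to be twofold: constructing the continuous $\varepsilon$ together with the distance identification of Steps 1--2 under non-compactness of $S$ (the delicate compactness/patching), and rigorously justifying the generating-function identity of Step 3 for a merely $C^{k-1}$ diffeomorphism $E$, since it is precisely this identity that powers the unexpected $C^{k-1}\to C^k$ upgrade.
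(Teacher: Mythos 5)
Your proposal follows essentially the same route as the paper: local invertibility of $E$ at the zero section from the non-characteristic condition, a Lipschitz patching of local radii into a continuous $\varepsilon$ together with a localization argument for injectivity and for the existence/uniqueness of minimizers, the identification of $d(\delta^2/2)_p$ with the final covector $e^{\vec{H}}(E^{-1}(p))$ (which the paper justifies via a first-variation lemma from Rifford rather than the Lagrangian/generating-function language, but it is the same identity) to bootstrap from $C^{k-1}$ to $C^k$, and propagation of $d\delta$ and $d^2\delta$ along a fixed geodesic by the Hamiltonian flow and its linearization for item (v). The only slip is the formula for $X\delta(\gamma_t)$, which should read $\langle e^{t\vec{H}}(\lambda),X(\gamma_t)\rangle/\sqrt{2H(\lambda)}$ without the extra factor of $t$ in the denominator (your version would contradict the eikonal equation), but this does not affect the smoothness conclusion.
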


The proof of \cref{thm:tubolarneigh} is split into two parts. In the first part, we prove \cref{item:tubolarneigh1,item:tubolarneigh2}. This part is quite standard, and it extends the proofs contained in \cite{Rossi22,MR4117982} to the case of a non-smooth, non-compact $S$. We present it here in order mainly to provide a self-consistent argument. From this, \cref{item:tubolarneigh3,item:tubolarneigh4} with $C^{k-1}$ regularity in place of $C^k$ follow. This is the starting point for the second part of the proof, where some new ideas are needed. There, we fully prove $C^k$ regularity in \cref{item:tubolarneigh3,item:tubolarneigh4}, and we prove \cref{item:tubolarneigh5}.
\begin{proof}[First part of the proof of \cref{thm:tubolarneigh}]
Note that $AS$ is a $C^{k-1}$ vector bundle, and $E: AS \to M$ is $C^{k-1}$. The non-characteristic assumptions is equivalent to the fact that $E$ has full rank at any point of the zero section of $AS$. By the inverse function theorem, it follows that for any point $q \in S$ there exists an open neighbourhood $V(q)\subset AS$ such that $E$ is a $C^{k-1}$ diffeomorphism when restricted to $V(q)$. Since $S$ is embedded and $2H$, restricted to the fibers of $AS$, is a well-defined norm, we can take
\begin{equation}
V(q) = V_{\varrho}(q) :=\left\{\lambda \in AS\mid \di(q,\pi(\lambda)) < \varrho, \,\, \sqrt{2H(\lambda)}<\varrho\right\},
\end{equation}
for some $\varrho>0$. Let then $\eta: S \to \R$ be
\begin{equation}
\eta(q):=\sup\{\varrho>0 \mid E:V_{\varrho}(q) \to E(V_{\varrho}(q)) \text{ is a diffeomorphism}\}>0.
\end{equation}
It is easy to prove that $\eta$ is $1$-Lipschitz w.r.t.\ $\di$. Then, we let $\varepsilon : S\to \R_{>0}$ be the function
\begin{equation}\label{eq:defeps}
\varepsilon(q):=\frac{1}{2}\min\Big\{\eta(q), \di(q,\mathrm{cl}(S)\setminus S)\Big\}>0.
\end{equation}
Note that $\di(q,\mathrm{cl}(S)\setminus S)>0$, for every $q\in S$, since we have taken $S$ to be a manifold without boundary. As the minimum of two $1$-Lipschitz functions, $\varepsilon$ is Lipschitz, and hence continuous. (Of course if $q\to q_0 \in\mathrm{cl}(S)\setminus S$ then $\varepsilon \to 0$.)

We now show that the set $V$ defined as in \eqref{eq:defV} satisfies \cref{item:tubolarneigh1,item:tubolarneigh2}. 

\textbf{Proof of \cref{item:tubolarneigh1}.} Let $\lambda_1,\lambda_2 \in V$, with $q_i=\pi(\lambda_i)$, and assume that $E(\lambda_1)=E(\lambda_2)=p$. The curves $\gamma_i:[0,1]\to M$ given by $\gamma_{i,t}=E(t\lambda_i)$ are horizontal, with length $\sqrt{2H(\lambda_i)}$, joining $q_i$ with $p$. Without loss of generality assume that $\varepsilon(q_1)\leq \varepsilon(q_2)$. Then $\di(q_1,q_2)<  \varepsilon(q_1) +\varepsilon(q_2)   \leq 2\varepsilon(q_2)\leq \eta(q_2)$. Thus by construction $\lambda_1 \in V_{\varepsilon(q_2)}(q_2)\subset V_{\eta(q_2)}(q_2)$. On this set $E$ is a diffeomorphism and thus $\lambda_1=\lambda_2$. It follows that $E: V\to E(V)$ is injective, and thus a $C^{k-1}$ diffeomorphism, proving \cref{item:tubolarneigh1}.

\textbf{Proof of \cref{item:tubolarneigh2}.} Let $p = E(\lambda)$ with $\lambda \in V$. The curve $\gamma :[0,1]\to M$ given by $\gamma_t = E(t\lambda)$ is horizontal, with length $\sqrt{2H(\lambda)}$, and joins $q=\pi(\lambda)\in S$ with $p$. It follows that
\begin{equation}\label{eq:pairing}
\delta(E(\lambda)) \leq \sqrt{2H(\lambda)} =\ell(\gamma).
\end{equation}
Let $(\gamma_n)_n$, $\gamma_n:[0,1]\to M$ be a sequence of horizontal curves with $\gamma_n(0) \in S$, $\gamma_n(1) = p$, such that $\ell(\gamma_n) \to \delta(p)$. Up to extraction (and since $(M,\di)$ is complete), $\gamma_n$ converges uniformly to a curve $\bar{\gamma}$ from $\mathrm{cl}(S)$ to $p$ such that $\ell(\bar{\gamma}) = \delta(p)$. We claim that the initial point $\bar{q}=\bar{\gamma}(0)$ cannot be in $\mathrm{cl}(S)\setminus S$. In fact since $q\in S$ then $\di(q,\mathrm{cl}(S)\setminus S)>0$ and on the other hand
\begin{align}
\di(q,\bar{q}) & \leq \di(q,p) + \di(p,\bar{q}) \leq \ell(\gamma) + \delta(p) \leq 2\ell(\gamma) < 2\varepsilon(q) \leq \di(q,\mathrm{cl}(S)\setminus S).
\end{align}
Therefore $\bar{\gamma}$ is a minimizing geodesic from $S$, starting from $\bar{q} \in S$. Furthermore, $S$ is non-characteristic (and without boundary). By \cref{prop:no_abn}, $\bar{\gamma}$ is the projection of a (unique) normal lift: there is $\bar{\lambda}\in AS$ with $\pi(\bar{\lambda}) = \bar{q}$ such that $\bar{\gamma}(t) = E(t\bar{\lambda})$. Moreover
\begin{equation}
\sqrt{2H(\bar{\lambda})} =\ell(\bar{\gamma}) =\delta(p) \leq \ell(\gamma) =\sqrt{2H(\lambda)} < \varepsilon(q) <\eta(q),
\end{equation}
and also $d(q,\bar{q}) <2\varepsilon(q)\leq \eta(q)$. It follows that $\bar{\lambda} \in V_{\eta(q)}(q)$ so that $\bar{\lambda}=\lambda$, and thus $\gamma = \bar{\gamma}$.

Since $(\gamma_n)_n$ was arbitrary, it follows that $\gamma:[0,1]\to M$ is the unique minimizing geodesic from $S$ to $p = E(\lambda)$, given by $\gamma_t=E(t\lambda)$, which is normal. This concludes the proof of \cref{item:tubolarneigh2}.

\textbf{Proof of \cref{item:tubolarneigh3,item:tubolarneigh4} for $C^{k-1}$.} By the above items, on $U =E(V)$, it holds
\begin{equation}
\delta^2 = 2H \circ E^{-1}.
\end{equation}
Since $E:V \to U$ is a $C^{k-1}$ diffeomorphism and $2H:T^*M\to \R$ is smooth, we obtain that $\delta^2 \in C^{k-1}(U)$ and $\delta \in C^{k-1}(U\setminus S)$. 

The fact that the Eikonal equation \eqref{eq:eikonal} holds is classical.  Indeed, since $\delta$ is $1$-Lipschitz w.r.t.\ $\di$, then (see e.g.\ \cite[Thm.\ 8]{FHK-Sobolev}), it holds
\begin{equation}\label{eq:leq}
\|\nabla\delta \|=\sqrt{\sum_{i=1}^N (X_i \delta)^2} \leq 1,\qquad \text{almost everywhere on $M$}.
\end{equation}
In particular \eqref{eq:leq} holds everywhere on $U\setminus S$. To prove the opposite inequality, let $\gamma_t = E(t\lambda)$ for $\lambda \in V$. The curve $\gamma:[0,1]\to M$ has speed $\|\dot\gamma_t\|=\sqrt{2H(\lambda)}$, and \cref{item:tubolarneigh2} implies
\begin{equation}
\delta(\gamma_t) = \sqrt{2H(t\lambda)} = t\|\dot{\gamma}\|, \qquad\, \forall \, t\in [0,1].
\end{equation}
Differentiating w.r.t.\ $t$ we get $g(\nabla\delta,\dot\gamma) =  \|\dot\gamma\|$. By the Cauchy-Schwartz inequality $\|\nabla\delta\|\geq 1$.
\end{proof}

To prepare for the second part of the proof of \cref{thm:tubolarneigh}, we need some preliminary lemmas. Denote with $\Phi: \R\times T^*M\to T^*M$ the \emph{extended Hamiltonian flow}, namely 
\begin{equation}
\label{eq:extended_hflow}
    \Phi(t,\lambda):=e^{t\vec H}(\lambda),\qquad\forall\,(t,\lambda)\in\R\times T^*M.
\end{equation}
Furthermore, denote with $\Psi: \R\times T(T^*M)\to T(T^*M)$ the linearisation of $\Phi$, namely
\begin{equation}\label{eq:lin_ham_flow}
\Psi(t,\xi):=d_{\pi(\xi)}\Phi(t,\cdot)(\xi), \qquad \forall\,(t,\xi) \in \R \times T(T^*M).
\end{equation}

The next result follows from standard regularity theory for ODEs.
\begin{lem}
\label{lem:ham_flow_analytic}
    Let $M$ be a smooth (or real-analytic) \sr ma\-nifold. Then $\Phi$ and $\Psi$ are smooth (or real-analytic). 
   \end{lem}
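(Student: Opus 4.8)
The plan is to prove \cref{lem:ham_flow_analytic} by invoking the standard theory of dependence of solutions of ODEs on initial conditions and parameters, applied to the Hamilton equations \eqref{eq:Hamilton_eqs}. First I would observe that, in any system of smooth (resp.\ real-analytic) local coordinates on $T^*M$ (for instance, Darboux coordinates, or just the canonical coordinates induced by a chart on $M$), the Hamiltonian $H$ of \eqref{eq:Hamiltonian} is a smooth (resp.\ real-analytic) function, since it is a finite sum of squares of the fiber-linear functions $\lambda \mapsto \langle \lambda, X_i\rangle$, and the $X_i$ are smooth (resp.\ real-analytic) by hypothesis. Consequently the Hamiltonian vector field $\vec H$, whose components are obtained from $H$ by the symplectic gradient (i.e.\ by applying the constant-coefficient Poisson bivector to the partial derivatives of $H$), has smooth (resp.\ real-analytic) coefficients.

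Next I would apply the classical theorem on the regularity of the flow of a vector field: the flow $\Phi(t,\lambda) = e^{t\vec H}(\lambda)$ of a smooth (resp.\ real-analytic) vector field depends smoothly (resp.\ real-analytically) on the time $t$ and on the initial datum $\lambda$ jointly. The real-analytic statement is the Cauchy--Kovalevskaya / analytic dependence theorem for ODEs (see e.g.\ any standard reference on analytic ODEs), and the smooth statement is the usual $C^\infty$ dependence on initial conditions. Completeness, assumed in \cref{sec:geod}, guarantees that $\Phi$ is defined on all of $\R \times T^*M$, so there is no issue with the domain.

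For the linearisation $\Psi$ of \eqref{eq:lin_ham_flow}, I would note that $\Psi(t,\xi) = d_{\pi(\xi)}\Phi(t,\cdot)(\xi)$ is, by definition, a first-order derivative of $\Phi$ in the initial condition, contracted against the tangent vector $\xi$. Since $\Phi$ is smooth (resp.\ real-analytic) jointly in $(t,\lambda)$, its differential $d\Phi(t,\cdot)$ is smooth (resp.\ real-analytic) of one order less in the smooth scale and with no loss in the analytic scale; pairing linearly with $\xi \in T_{\pi(\xi)}(T^*M)$ preserves this regularity. Equivalently, $\Psi$ solves the variational (linearised) equation $\dot\Psi = d\vec H \cdot \Psi$ along the flow, which is itself a smooth (resp.\ real-analytic) ODE, so the same dependence theorems apply directly.

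I do not expect any genuine obstacle here, which is why the statement is asserted to follow ``from standard regularity theory for ODEs.'' The only points deserving a line of care are: (i) checking that $\vec H$ inherits the regularity of the structure, which reduces to the coordinate expression of $H$ above; and (ii) ensuring the flow is globally defined, which is exactly the completeness assumption. The smooth case and the real-analytic case are handled in parallel, the sole difference being the cited dependence theorem (\emph{smooth} versus \emph{analytic} dependence on initial data). A fully rigorous write-up would simply name these two theorems and verify the two points above.
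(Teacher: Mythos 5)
Your proposal is correct and matches the paper, which gives no written proof beyond the remark that the lemma ``follows from standard regularity theory for ODEs''; you have simply spelled out that assertion (regularity of $H$ and $\vec H$, smooth/analytic dependence of flows on initial data, completeness for global definition, and the variational equation for $\Psi$). No gap to report.
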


We also recall \cite[Lemma 2.15]{Riffordbook}. There, the \sr structure is assumed to have constant rank, but the statement holds, with the same proof, in our setting.
\begin{lem}\label{lem:rifford}
Let $p\neq q \in M$ be such that there exists a function $\phi : M \to \R$, differentiable at $p$, such that
\begin{equation}
\phi(p) = \di^2(p,q),\qquad\text{and}\qquad  \di^2(q,z)\geq \phi(z),\quad \forall\, z\in M.
\end{equation}
Then there is a unique minimizing geodesic $\gamma:[0,1]\to M$ between $p$ and $q$. It is the projection of a unique normal extremal $\lambda : [0,1]\to T^*M$, that satisfies $\lambda_1  = \tfrac{1}{2}d_p\phi$.
\end{lem}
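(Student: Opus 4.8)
The plan is to recognize $\gamma$ as the solution of a \emph{free-endpoint} optimal control problem and to read off the conclusion from the Pontryagin Maximum Principle (PMP). First I would use completeness of $(M,\di)$ to produce at least one constant-speed minimizing geodesic $\gamma:[0,1]\to M$ with $\gamma_0=q$ and $\gamma_1=p$, so that $\int_0^1\|\dot\gamma_t\|^2\de t=\ell(\gamma)^2=\di^2(p,q)$. The key observation is then that $\gamma$ minimizes the functional
\[
J(c):=\int_0^1\|\dot c_t\|^2\de t-\phi(c(1))
\]
among all horizontal curves $c:[0,1]\to M$ with $c(0)=q$ and free right endpoint. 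Indeed, for any such $c$ ending at $z=c(1)$ one has $\int_0^1\|\dot c_t\|^2\de t\geq \di^2(q,z)\geq\phi(z)$, whence $J(c)\geq 0$, while the hypotheses give $J(\gamma)=\di^2(q,p)-\phi(p)=0$; thus $\min J=0=J(\gamma)$.

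Next I would apply the PMP to this problem, whose terminal cost $-\phi$ is differentiable at the optimal endpoint $p$. This produces a nontrivial pair $(\lambda,\nu)$, with lift $\lambda:[0,1]\to T^*M$ and multiplier $\nu\geq 0$, maximizing the control Hamiltonian $h_\nu(c,\lambda,u)=\langle\lambda,\sum_i u_i X_i(c)\rangle-\nu\sum_i u_i^2$ and satisfying the transversality condition $\lambda_1=\nu\,d_p\phi$ at the free endpoint. To rule out the abnormal case $\nu=0$, I note that maximality of $h_0$ over $u\in\R^N$ forces $\langle\lambda_t,X_i(\gamma_t)\rangle=0$ for all $i$, so $\lambda$ is an abnormal lift; but transversality then gives $\lambda_1=0$, and since abnormal lifts satisfy a linear Cauchy problem along $\gamma$, this forces $\lambda\equiv 0$, contradicting $(\lambda,\nu)\neq 0$. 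Hence $\nu>0$, and I normalize $\nu=1$.

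For $\nu=1$ the Hamiltonian $h_1$ is strictly concave in $u$, with maximizer $u_i=\tfrac12\langle\lambda_t,X_i\rangle$ and maximized value $\tfrac12 H(\lambda_t)$. Performing the rescaling $\lambda\mapsto\tfrac12\lambda$ that turns the costate of the energy functional into the standard sub-Riemannian covector, I conclude that $\gamma$ is the projection of a normal extremal of $\vec{H}$ whose terminal covector is $\tfrac12 d_p\phi$, that is $\gamma_t=\pi\circ e^{(t-1)\vec{H}}(\tfrac12 d_p\phi)$. For uniqueness, the same argument applies verbatim to \emph{every} minimizing geodesic between $p$ and $q$: each is normal with the identical terminal covector $\tfrac12 d_p\phi$, and by uniqueness of solutions to the Hamilton equations they all coincide.

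I expect the only delicate points to be the bookkeeping of the factor $\tfrac12$ relating the PMP costate of the energy functional to the normalization of $\vec{H}$ used in the paper, and the rigorous exclusion of the abnormal multiplier, which crucially relies on the fact that $\phi$ need only be differentiable at the single point $p$ for the transversality condition—and hence the constraint $\lambda_1=0$ in the abnormal case—to be available.
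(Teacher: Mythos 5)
Your proposal is essentially correct, but it is worth noting that the paper does not prove this lemma at all: it is quoted from Rifford's book (Lemma 2.15 there), with the remark that the proof carries over to the rank-varying case. Your reformulation as a free-endpoint problem for the penalized energy $\int_0^1\|\dot c_t\|^2\,\de t-\phi(c(1))$ is exactly the idea underlying that proof; the difference lies in how the first-order condition is extracted. Rifford works directly on the control space: writing $F(u):=C(u)-\phi(E^{q}(u))$ with $E^{q}$ the end-point map on $L^2([0,1],\R^N)$, one has $F\geq 0$ with $F(\bar u)=0$ at the minimizing control, and since $E^{q}$ is smooth while $\phi$ is differentiable at $p=E^{q}(\bar u)$, the chain rule makes $F$ differentiable at $\bar u$, so $d_{\bar u}C=d_p\phi\circ d_{\bar u}E^{q}$; this is precisely the normal Lagrange condition with terminal covector $\tfrac12 d_p\phi$, and normality comes for free, with no abnormal multiplier to exclude. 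Your route instead invokes the Pontryagin Maximum Principle with terminal cost $-\phi$, and this is where the one genuine gap sits: off-the-shelf statements of the PMP with a terminal cost assume the cost is $C^1$ (or at least differentiable on a neighbourhood), whereas here $\phi$ is differentiable only at the single point $p$ and need not even be continuous elsewhere. You correctly flag this as the delicate point and the assertion is true --- the needle-variation proof of transversality only ever evaluates $d\phi$ at the optimal endpoint --- but as written you are quoting a theorem under hypotheses it does not officially cover, so you should either rerun that part of the PMP proof or, more economically, replace the PMP by the direct Fr\'echet-differentiability argument above. The remaining bookkeeping in your proposal (the factor $\tfrac12$ after rescaling the costate so that the controls become $u_i=\langle\lambda_t,X_i\rangle$, the exclusion of $\nu=0$ via linearity of the adjoint equation, and the uniqueness of the geodesic from the fact that every minimizer acquires the same terminal covector and Cauchy--Lipschitz) is correct.
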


The following lemma relates the differential of $\delta$ at different points along geodesics from $S$.

\begin{lem}
\label{lem:diff_delta}
Let $V\subset AS$, $U\subset M$ be the neighbourhoods of \cref{thm:tubolarneigh}. Let $p\in U \setminus S$ and let $\gamma:[0,1]\to M$ be the unique minimizing geodesic from $S$ to $p = E(\lambda)$, with $\lambda \in V$. Then
  \begin{equation}
      \label{eq:identity_final_covector}
      \Phi(t,\lambda)=\ds(p)d_{\gamma_t}\ds,\qquad\forall\,t\in (0,1],
  \end{equation}
  and moreover
    \begin{equation}
    \label{eq:tautology_distance_t_s}
        d_{\gamma_t}\ds= \Phi ((t-s)\ds(p), d_{\gamma_s}\ds),\qquad \forall\,t,s\in (0,1].
    \end{equation} 
\end{lem}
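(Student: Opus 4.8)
The plan is to derive both identities from two ingredients: the Hamilton--Jacobi-type characterization of the final covector of a minimizing geodesic furnished by \cref{lem:rifford}, applied to the function $\phi=\ds^2$, together with the fiberwise homogeneity of the Hamiltonian flow,
\begin{equation}\label{eq:homog-plan}
\Phi(t,c\lambda)=c\,\Phi(ct,\lambda),\qquad c>0,
\end{equation}
where $c\lambda$ denotes fiberwise scalar multiplication in $T^*M$. Identity \eqref{eq:homog-plan} is the standard consequence of the $2$-homogeneity of $H$ along the fibers (one checks it by differentiating the right-hand side in $t$ and using $\vec H(c\mu)=c\,(h_c)_*\vec H(\mu)$, where $h_c(\lambda):=c\lambda$ is the fiber dilation). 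Throughout I will use, from \cref{item:tubolarneigh2,item:tubolarneigh4}, that $\ds\circ E=\sqrt{2H}$ on $V$ and that $\ds^2\in C^{k-1}(U)\subset C^1(U)$, so $\ds^2$ is differentiable on all of $U$.

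For \eqref{eq:identity_final_covector}, fix $t\in(0,1]$ and set $q:=\gamma_t=E(t\lambda)$. Since $\sqrt{2H(t\lambda)}=t\sqrt{2H(\lambda)}<\varepsilon(\pi(\lambda))$, we have $t\lambda\in V$, hence $q\in U\setminus S$ and $\ds(q)=t\,\ds(p)>0$. By \cref{item:tubolarneigh2} applied to $t\lambda$, the unique minimizing geodesic from $S$ to $q$ is $u\mapsto E(ut\lambda)$, i.e.\ the projection of the normal extremal $\Phi(\cdot,t\lambda)$; moreover it is a minimizing geodesic \emph{between the points} $\gamma_0\in S$ and $q$, since $\di(q,\gamma_0)\le\ds(q)=\di(q,S)\le\di(q,\gamma_0)$ forces equality. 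I would then apply \cref{lem:rifford} with $\phi=\ds^2$, endpoint $p_{\mathrm{rif}}=q$ and base point $q_{\mathrm{rif}}=\gamma_0$: its hypotheses hold because $\phi(q)=\ds^2(q)=\di^2(q,\gamma_0)$, because $\di^2(\gamma_0,z)\ge\ds^2(z)=\phi(z)$ for every $z$ (as $\gamma_0\in S$), and because $\phi$ is differentiable at $q$. The lemma yields a unique normal lift whose final covector equals $\tfrac12 d_q(\ds^2)=\ds(q)\,d_q\ds=t\,\ds(p)\,d_{\gamma_t}\ds$. As $\Phi(\cdot,t\lambda)$ is a normal lift of that same geodesic, by uniqueness its final covector $\Phi(1,t\lambda)$ equals $t\,\ds(p)\,d_{\gamma_t}\ds$; applying \eqref{eq:homog-plan} in the form $\Phi(1,t\lambda)=t\,\Phi(t,\lambda)$ and cancelling $t>0$ gives $\Phi(t,\lambda)=\ds(p)\,d_{\gamma_t}\ds$.

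For \eqref{eq:tautology_distance_t_s}, I would invoke \eqref{eq:identity_final_covector} at the parameter $s$ to write $d_{\gamma_s}\ds=\tfrac{1}{\ds(p)}\Phi(s,\lambda)$, and then compute, using \eqref{eq:homog-plan} with $c=1/\ds(p)$ and the group law $\Phi(\tau,\Phi(s,\lambda))=\Phi(\tau+s,\lambda)$:
\begin{align}
\Phi\!\left((t-s)\ds(p),\,d_{\gamma_s}\ds\right)
&=\Phi\!\left((t-s)\ds(p),\,\tfrac{1}{\ds(p)}\Phi(s,\lambda)\right)\\
&=\tfrac{1}{\ds(p)}\,\Phi(t-s,\Phi(s,\lambda))=\tfrac{1}{\ds(p)}\Phi(t,\lambda)=d_{\gamma_t}\ds,
\end{align}
where the last equality is again \eqref{eq:identity_final_covector}.

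The step I expect to be the main obstacle is the bookkeeping in the second paragraph: matching the time-$t$ geodesic issued from $S$ with the $[0,1]$-parametrization required by \cref{lem:rifford}, and correctly converting the final covector $\Phi(1,t\lambda)$ of the rescaled geodesic back into $t\,\Phi(t,\lambda)$ through the homogeneity \eqref{eq:homog-plan}. One must also be careful to verify that $\Phi(\cdot,t\lambda)$ is precisely the normal lift singled out by \cref{lem:rifford} — this is exactly where the uniqueness in that lemma (and the non-characteristic assumption, via \cref{prop:no_abn}) is used — and to keep track of the factor $\ds(q)=t\,\ds(p)$ arising from differentiating $\ds^2$. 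Once \eqref{eq:homog-plan} is in place, the remaining manipulations are purely formal.
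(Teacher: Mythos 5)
Your proposal is correct and follows essentially the same route as the paper: both apply \cref{lem:rifford} with $\phi=\ds^2$ at the intermediate points $E(t\lambda)$ to identify the final covector $\Phi(1,t\lambda)=t\ds(p)\,d_{\gamma_t}\ds$, and then use the degree-$2$ homogeneity of $H$ (via $\Phi(1,t\lambda)=t\Phi(t,\lambda)$) together with the group property of the flow to obtain both identities. The only cosmetic difference is in \eqref{eq:tautology_distance_t_s}, where the paper solves for $\lambda$ by flowing backwards from times $t$ and $s$ while you compose the flows forward; the underlying manipulation is the same.
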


\begin{proof}
    Let $\lambda\in V\subset AS$ be the covector such that $E(\lambda)=p$. The curve $\gamma$ is characterized as
    \begin{equation}
    \label{eq:normal_geod}
        \gamma_t=\pi\circ \Phi(t,\lambda)\qquad\forall\,t\in [0,1].
    \end{equation}
Let $q:=\pi(\lambda)$ and $\phi:=\ds^2$. Then $\phi \in C^1(U\setminus S)$ by \cref{thm:tubolarneigh} and, in addition, it satisfies
    \begin{equation}
\phi(p) = \di^2(p,q),\qquad\text{and}\qquad  \di^2(q,z)\geq \phi(z),\quad \forall\, z\in M.
\end{equation}
    It follows from \cref{lem:rifford} that $\gamma$ is the projection of a unique normal extremal with final covector $\frac{1}{2} d_p\phi=\ds(p)d_p\ds\in T_p^*M$.  Thus, by Cauchy-Lipschitz, we must have $\Phi(1,\lambda)=\frac{1}{2}d_p\phi$. Repeating the same argument for $E(t\lambda) \in U$, for every $t\in (0,1]$, we obtain that 
    \begin{equation}
    \label{eq:tautology_distance}
    \Phi(1,t\lambda)=
        \ds(\gamma_t)d_{\gamma_t}\ds=t\ds(p)d_{\gamma_t}\ds.
    \end{equation}    
  In addition, since $H$ is homogeneous of degree $2$, $\Phi(1,t\lambda)=t\Phi(t,\lambda)$, thus \eqref{eq:tautology_distance} implies \eqref{eq:identity_final_covector}.

From \eqref{eq:identity_final_covector}, evaluated at $t,s\in (0,1]$, and using the group property of $\Phi(\cdot,\lambda)$, we deduce
  \begin{equation}
  \label{eq:identity_intial_covector}
      \lambda=\ds(p)\Phi(-t\ds(p),d_{\gamma_t}\ds)=\ds(p)\Phi(-s\ds(p),d_{\gamma_s}\ds),
  \end{equation}
  from which \eqref{eq:tautology_distance_t_s} follows. 
\end{proof}

We can now complete the proof of  \cref{thm:tubolarneigh}. Remember that we have already proved \cref{item:tubolarneigh1,item:tubolarneigh2} and \cref{item:tubolarneigh3,item:tubolarneigh4} for $C^{k-1}$ regularity.

\begin{proof}[Second part of the proof of \cref{thm:tubolarneigh}]
Let $V\subset AS$ be the neighbourhood \eqref{eq:defV}.

\textbf{Proof of \cref{item:tubolarneigh3,item:tubolarneigh4}, conclusion.} In the first part of the proof we proved that $\ds \in C^{k-1}(U\setminus S)$ and $\ds^2 \in C^{k-1}(U)$. Using \eqref{eq:identity_final_covector}, at $t=1$, for all $p \in  U\setminus S$ it holds
    \begin{equation}
    \label{eq:dds}
	d_{p}\ds = \frac{1}{\ds(p)} \Phi(1,E^{-1}(p)) = \Phi \left( \ds(p),\frac{E^{-1}(p)}{\ds(p)}\right).
    \end{equation}
It follows that $\ds\in C^k(U\setminus S)$. Similarly from \eqref{eq:identity_final_covector} we deduce 
    \begin{equation}
        d_p\ds^2=2\ds(p)d_p\ds=2\Phi(1,E^{-1}(p)). 
    \end{equation}
    This shows that $\ds^2\in C^k(U)$.
    
\textbf{Proof of \cref{item:tubolarneigh5}.} Let $\gamma:[0,1]\to M$ be the minimizing geodesic from $S$, with $\gamma_1 =p \in U \setminus S$. First of all, $\delta(\gamma_t) = t\delta(p)$, which is an analytic function of $t$. Second of all, as a consequence of \cref{lem:diff_delta}, the map $(0,1]\ni t\mapsto d_{\gamma_t}\delta$ is smooth (or real-analytic). It follows that $t\mapsto X\delta(\gamma_t) = (d_{\gamma_t}\delta)(X)$ is smooth (or real-analytic).

The proof for the regularity of $YX\delta$ is as follows. For a vector field $X$, let $h_X:T^*M\to\R$ defined as $h_X(\lambda)=\langle\lambda,X\rangle$.  Then, we write
        \begin{equation}
        \label{eq:YXds}
            YX\ds |_{p}=d_p(X\ds)Y= h_Y (d_p( h_X(d\ds))) = h_Y\circ d_{d_p\ds}h_X\circ d_p(d\ds),
        \end{equation}
    where  $d\delta : U\setminus S \to T^*M$ is of class $C^{k-1}$. Since $X,Y$ are smooth (or real-analytic) so are $h_Y$ and $dh_X$. We are left to show that $d(d\ds):T(U\setminus S)\to T(T^*M)$ is smooth (or real-analytic) along $\gamma$. Define $E^+:(0,1)\times A S\to M$ by setting $E^+(t,\lambda):=\pi\circ\Phi(t,\lambda)$, and observe that it is smooth (or real-analytic) w.r.t.\ $t$. Then, \eqref{eq:tautology_distance_t_s} with $s=1$ implies the following: for all $r>0$ and all $\lambda \in V\subset AS$ with $\sqrt{2H(\lambda)}=r$ it holds
            \begin{equation}
        \label{eq:tautology_distance_E1}
            d_{\unitexp(t,\lambda)}\ds= \Phi ((t-1)r, d_{\unitexp(1,\lambda)}\ds),\qquad \forall\,t \in (0,1].    
        \end{equation}
        Fix a basis $\{v_1,\ldots,v_{n-1}\} \in T_\lambda(\ann S\cap\{\sqrt{2H}=r\})$ and define $Y_i(t):=(d_\lambda\unitexp(t,\cdot))(v_i)$. Then, for $t\in (0,1]$, setting $Y_n(t):=\dot\gamma_t$, the family $\{Y_1(t),\ldots,Y_{n}(t)\}$ is a smooth (or real-analytic) moving frame for $T_{\gamma_t}M$ along $\gamma$. On the one hand, for every $i=1,\ldots,n-1$, differentiating \eqref{eq:tautology_distance_E1} w.r.t.\ $\lambda$, and evaluating along $\gamma_t$, we obtain
         \begin{equation}
         \label{eq:aux1_d2_delta}
            d^2_{\gamma_t}\delta(Y_i(t))=d_{\lambda}(d\delta\circ \unitexp(t,\cdot))(v_i)=\Psi((t-1)r, d^2_{p}\ds(Y_i(1))),\qquad \forall\,t\in (0,1],
         \end{equation}
where we used the shorthand $d^2_q \delta = d_q (d\delta)$, and where $\Psi$ is the linearisation of $\Phi$, see \eqref{eq:lin_ham_flow}. On the other hand, a direct computation shows that
         \begin{equation}
         \label{eq:aux2_d2_delta}
             d^2_{\gamma_t}\delta(Y_n(t))=\partial_t\Phi((t-1)r,d_{p}\delta) = r \vec{H}|_{\Phi((t-1)r,d_{p}\delta)}
                        ,\qquad \forall\,t \in (0,1].
         \end{equation}
         Therefore, we conclude that (the matrix representation of) the map $(0,1]\ni t\mapsto d_{\gamma_t}(d\delta)$ is smooth (or real-analytic) thanks to \cref{lem:ham_flow_analytic} and the analogous regularity of $d\delta$. 
\end{proof}

\subsection{The case of two-sided hypersurfaces}\label{sec:twosided}

Let $k\geq 2$. We say that a $C^k$ embedded submanifold $S\subset M$ (without boundary) is a two-sided hypersurface if has codimension one and it admits a never-vanishing transverse and continuous vector field $N$. (E.g.\ $S = \partial\Omega$ for a properly embedded manifold with boundary $\Omega$.) In this case, we have a splitting
\begin{equation}\label{eq:splitAS}
AS = S\sqcup A^+S \sqcup A^{-}S, \qquad A^{\pm} S = \{\lambda \in AS \mid \pm \langle \lambda,N\rangle >0\},
\end{equation}
where $S$ above is identified with the zero section. The neighbourhood $U\subset M$ of \cref{thm:tubolarneigh} splits according to \eqref{eq:splitAS} as
\begin{equation}
U = S \sqcup U^+ \sqcup U^-, \qquad U^{\pm} := E\left(A^{\pm} S \cap V\right).
\end{equation}
In this case, we define the \emph{signed distance from} $S$ as the function $\delta_{\mathrm{sign}}: U\to \R$
    \begin{equation} 
        \ds_{\mathrm{sign}}(p):=\begin{cases} \delta(p) &\text{if }p\in U^+,\\
            -\delta(p) &\text{if }p\in U^-,\\
            0 & \text{if } p \in S.
        \end{cases}
    \end{equation}

\begin{cor}\label{cor:twoside}
In the same setting of \cref{thm:tubolarneigh}, assuming furthermore that $S$ is a two-sided non-characteristic hypersurface, \cref{item:tubolarneigh3,item:tubolarneigh4,item:tubolarneigh5} hold on up to $S$ (i.e.\ on the whole $U$), replacing $\delta$ with $\delta_{\mathrm{sign}}$.
\end{cor}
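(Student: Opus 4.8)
The plan is to upgrade \cref{item:tubolarneigh3,item:tubolarneigh4,item:tubolarneigh5} of \cref{thm:tubolarneigh} across $S$ by introducing a \emph{signed} reparametrisation of the normal exponential map; the mechanism that makes this work is that the two one-sided families of minimizing geodesics glue into genuine flow lines of $\vec H$ passing through $S$.

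\textbf{Signed exponential coordinate.} Since $S$ is a $C^k$ two-sided hypersurface, the $C^{k-1}$ line bundle $AS$ admits a global $C^{k-1}$ unit section $\nu\colon S\to A^1S$, singled out by $\langle\nu,N\rangle>0$. As $2H(\nu)\equiv 1$, the map $\Lambda(r,q):=r\,\nu(q)$ is a $C^{k-1}$ diffeomorphism from $\{(r,q):|r|<\varepsilon(q)\}$ onto the set $V$ of \eqref{eq:defV} (which is symmetric under $\lambda\mapsto-\lambda$, since $H$ is even). Composing with the diffeomorphism $E\colon V\to U$ of \cref{item:tubolarneigh1}, the map $\hat E:=E\circ\Lambda$ is a $C^{k-1}$ diffeomorphism onto $U$; write $\hat E^{-1}=(\rho,P)$, with $P\colon U\to S$ the foot-point map. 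For $r<0$ one has $r\nu(q)=|r|(-\nu(q))\in A^-S$ with $\sqrt{2H}=|r|$, so $\delta=|r|$ and $\delta_{\mathrm{sign}}=r$; thus $\rho=\delta_{\mathrm{sign}}$, and $\delta_{\mathrm{sign}}, P, \nu\circ P\in C^{k-1}(U)$. In particular $E^{-1}(p)=\delta_{\mathrm{sign}}(p)\,\nu(P(p))$ on all of $U$, which already resolves the Lipschitz corner of $\delta$ on $S$.

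\textbf{Proof of \cref{item:tubolarneigh3,item:tubolarneigh4}.} I would establish the uniform identity
\[
 d_p\delta_{\mathrm{sign}}=\Phi\bigl(\delta_{\mathrm{sign}}(p),\,\nu(P(p))\bigr),\qquad p\in U .
\]
On $U^+$ this is \eqref{eq:dds} together with $E^{-1}(p)=\delta(p)\,\nu(P(p))$ and the homogeneity of $H$. On $U^-$ the same computation gives $d_p\delta_{\mathrm{sign}}=-\Phi(\delta(p),-\nu(P(p)))$, which I convert using the parity of the Hamiltonian flow, $\Phi(t,-\lambda)=-\Phi(-t,\lambda)$ (itself a consequence of $H\circ\sigma_{-1}=H$ and $\sigma_{-1}^*\varsigma=-\varsigma$ for the fibrewise antipode $\sigma_{-1}$, whence $(\sigma_{-1})_*\vec H=-\vec H$), so that both sides equal $\Phi(\delta_{\mathrm{sign}},\nu\circ P)$. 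The right-hand side is $C^{k-1}$ on all of $U$, being the composition of the smooth (real-analytic) flow $\Phi$ of \cref{lem:ham_flow_analytic} with the $C^{k-1}$ data $\delta_{\mathrm{sign}}$ and $\nu\circ P$. Since $\delta_{\mathrm{sign}}\in C^1(U)$ and $U\setminus S$ is dense, the identity extends from $U\setminus S$ to $U$ by continuity; a $C^1$ function with $C^{k-1}$ differential is $C^k$, so $\delta_{\mathrm{sign}}\in C^k(U)$. The eikonal equation holds on $U\setminus S$ because $\delta_{\mathrm{sign}}=\pm\delta$ there, and extends to $U$ by continuity of $\nabla\delta_{\mathrm{sign}}$ (on $S$ it reads $\|\nu^\sharp\|=1$). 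Finally $\delta_{\mathrm{sign}}^2=\delta^2$ gives \cref{item:tubolarneigh4}.

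\textbf{Proof of \cref{item:tubolarneigh5}.} I would rerun the flow-frame argument from the second part of the proof of \cref{thm:tubolarneigh}, now along the full geodesic $c(r):=\pi\circ\Phi(r,\nu(q))$, $r\in(-\epsilon,\epsilon)$, which crosses $S$ at $r=0$. Along $c$ one has $\delta_{\mathrm{sign}}(c(r))=r$ and $d_{c(r)}\delta_{\mathrm{sign}}=\Phi(r,\nu(q))$. Fixing a basis $\{v_1,\dots,v_{n-1}\}$ of $T_{\nu(q)}A^1S$ and setting $Y_i(r):=d\pi\,\Psi(r,v_i)$ and $Y_n(r):=\dot c(r)=d\pi\,\vec H|_{\Phi(r,\nu(q))}$ yields a moving frame that is smooth (real-analytic) in $r$ — because it is built from $\Phi$ and its linearisation $\Psi$, not from the $q$-differentiation of $\hat E$ — and that at $r=0$ equals $\{w_1,\dots,w_{n-1},\nu(q)^\sharp\}$, hence is a frame for small $r$ by non-characteristicity. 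Differentiating $d_{\pi\Phi(r',\nu(q'))}\delta_{\mathrm{sign}}=\Phi(r',\nu(q'))$ in the $v_i$- and $r'$-directions gives $d^2_{c(r)}\delta_{\mathrm{sign}}(Y_i(r))=\Psi(r,v_i)$ and $d^2_{c(r)}\delta_{\mathrm{sign}}(Y_n(r))=\vec H|_{\Phi(r,\nu(q))}$, both smooth (real-analytic) in $r$. Thus the matrix of $d(d\delta_{\mathrm{sign}})$ along $c$ is smooth (real-analytic) on $(-\epsilon,\epsilon)$, and \eqref{eq:YXds} (with $\delta_{\mathrm{sign}}$ in place of $\delta$) gives that $\delta_{\mathrm{sign}}$, $X\delta_{\mathrm{sign}}$, $YX\delta_{\mathrm{sign}}$ are smooth (real-analytic) along $c$, including at the crossing point on $S$.

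\textbf{Main obstacle.} The only new content beyond \cref{thm:tubolarneigh} is the behaviour across $S$, and its crux is the parity identity for $\vec H$: it encodes that the positive and negative half-geodesics are the two halves of single $\vec H$-flow lines through $S$, which is precisely what promotes the corner of $\delta$ to $C^k$-regularity of $\delta_{\mathrm{sign}}$ and lets the frame argument run through $r=0$. Once the signed coordinate $\Lambda$ and the uniform differential identity are in place, the rest is a transcription of the unsigned proof.
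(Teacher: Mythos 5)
Your proposal is correct and follows essentially the same route as the paper: both first obtain $\delta_{\mathrm{sign}}\in C^{k-1}(U)$ by trivializing the annihilator line bundle (your global unit section $\nu$ is exactly the paper's local $df/\sqrt{2H(df)}$), then upgrade to $C^k$ via the identity $d_p\delta_{\mathrm{sign}}=\Phi\bigl(\delta_{\mathrm{sign}}(p),\nu(P(p))\bigr)$, and finally rerun the moving-frame argument for \cref{item:tubolarneigh5} across $S$. The only cosmetic difference is that you justify the identity on $U^-$ by the explicit flow-parity relation $\Phi(t,-\lambda)=-\Phi(-t,\lambda)$, whereas the paper extends \cref{lem:diff_delta} to $s=0$ by continuity and evaluates there; both are valid.
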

\begin{proof}
We only have to focus on \cref{item:tubolarneigh3,item:tubolarneigh5}. For any $q\in S$ there is a neighbourhood $W$ and $f\in C^k(W)$ such that 
    \begin{equation}
       S\cap W = \{f=0\},\qquad \text{with} \qquad df|_{S\cap W}\neq 0.
    \end{equation}
	Since $AS\cap \pi^{-1}(W)$ has one-dimensional fiber, any $\lambda \in AS\cap \pi^{-1}(W)$ can be written as $\lambda = z df$, and the map $\lambda \mapsto (\pi(\lambda),z)$ defines a $C^{k-1}$ chart for $AS$. Therefore, arguing as in the first part of the proof of \cref{thm:tubolarneigh}, we have that
	\begin{equation}
	\delta_{\mathrm{sign}}(E(\lambda)) = z \sqrt{2H(df)}, \qquad \forall\, \lambda=z df \in V \cap \pi^{-1}(W).
	\end{equation}
	We conclude that $\delta_{\mathrm{sign}} \in C^{k-1}(U)$ and that \eqref{eq:eikonal} holds.
	
	Since $\delta_{\mathrm{sign}} \in C^{k-1}(U)$, taking the limit for $t\to 0$ (or $s\to 0$) in \eqref{eq:identity_final_covector} and \eqref{eq:tautology_distance_t_s} one obtains that \cref{lem:diff_delta} holds (for the signed distance) at all $t,s\in[0,1]$. Hence, applying \cref{lem:diff_delta} for $p\in U$, $t=1$ and $s=0$ and observing that $d\delta_{\mathrm{sign}}=df /  \sqrt{2H(df)}$ on $S\cap W$, we obtain
	\begin{equation}
	d_p\delta_{\mathrm{sign}} = \Phi\left(\delta_{\mathrm{sign}}(p), \left.\frac{d f}{\sqrt{2H(df)}}\right|_{\pi\circ E^{-1}(p)}\right).
	\end{equation}
	Since $E^{-1} :U \to V$ is a $C^{k-1}$ diffeomorphism we obtain that $\delta_{\mathrm{sign}} \in C^{k}(U)$. Once this is proved, the proof of \cref{item:tubolarneigh5} (replacing $\delta$ with $\delta_{\mathrm{sign}}$) also extends up to $t=0$.
 \end{proof}

\subsection{Uniform tubular neighbourhoods}

The function $\varepsilon: S\to \R_{>0}$ of \cref{thm:tubolarneigh} may tend to zero either when $S$ is non-compact or $\mathrm{cl}(S)\setminus S$ is non-empty. Under suitable extrinsic conditions on $S$, the tubular neighbourhood of \cref{thm:tubolarneigh} can be chosen to be uniform.

\begin{defn}\label{def:extendible}
An embedded submanifold $S  \subset M$ of class $C^k$ satisfies the \emph{extendibility property} if there exists an embedded submanifold $\tilde{S}  \subset M$ of class $C^k$ of the same dimension, and without boundary, such that 
\begin{equation}\label{eq:separation}
    \mathrm{cl}(S) \subset \tilde{S}.
\end{equation}
If $S$ is non-characteristic, we also ask $\tilde{S}$ to be non-characteristic.
\end{defn}

\begin{example}
Any closed manifold $S$ (i.e., compact without boundary) embedded in $M$ satisfies the extendibility property with $\tilde{S}=S$. If $\mathrm{cl}(S)\subset M$ is an embedded submanifold with boundary, then one can build $\tilde{S}$ such that \eqref{eq:separation} is satisfied, via a suitable flowout from its boundary \cite[p.\ 217]{MR2954043}; if $\mathrm{cl}(S)$ has no characteristic points, $\tilde{S}$ can be chosen to satisfy the same property. The \emph{localizing neighbourhoods} used in \cite{Balogh-Steiner,BB-Steiner3D} also satisfy \cref{def:extendible}. Note that $S$ may satisfy the extendibility property without $\mathrm{cl}(S)$ being an embedded manifold with boundary (e.g., any open set $S\subset \R^2 \hookrightarrow \R^3$, with irregular frontier).
An example without the extendibility property is the figure $\infty$ with the central point removed, embedded in $\R^2$.
\end{example}

\begin{thm}\label{thm:uniformneigh}
In the setting of \cref{thm:tubolarneigh}, assume furthermore that $S$ is bounded and satisfies the extendibility property. Then, there exists $r_0=r_0(S)>0$ such that \cref{item:tubolarneigh1,item:tubolarneigh2,item:tubolarneigh3,item:tubolarneigh4,item:tubolarneigh5} hold for
\begin{equation}\label{eq:Vuniform}
V:=\left\{ \lambda \in AS \,\Big|\, \sqrt{2H(\lambda)}< r_0 \right\}.
\end{equation}
\end{thm}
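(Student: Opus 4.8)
The plan is to reduce everything to \cref{thm:tubolarneigh} applied to the extending manifold $\tilde S$, exploiting that $\tilde S$ is a non-characteristic $C^k$ submanifold \emph{without boundary}, so the associated function $\tilde\varepsilon \colon \tilde S \to \R_{>0}$ is continuous and strictly positive on all of $\tilde S$. First I would record that, since $M$ is complete and $S$ is bounded, the set $\mathrm{cl}(S)$ is compact, and $\mathrm{cl}(S) \subset \tilde S$ by the extendibility property. Hence
\[
r_0 := \inf_{q \in \mathrm{cl}(S)} \tilde\varepsilon(q) > 0,
\]
being the infimum of a continuous positive function over a compact set. I then let $\tilde V := \{\lambda \in A\tilde S \mid \sqrt{2H(\lambda)} < \tilde\varepsilon(\pi(\lambda))\}$, $\tilde U := \tilde E(\tilde V)$ be the neighbourhoods and $\tilde E$ the normal exponential map provided by \cref{thm:tubolarneigh} for $\tilde S$.

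The key structural observation is that $AS$ and $A\tilde S$ coincide over $S$. Indeed, $S$ and $\tilde S$ have the same dimension with $\mathrm{cl}(S)\subset \tilde S$, so $S$ is a relatively open submanifold of $\tilde S$; thus $T_q S = T_q \tilde S$ and $A_q S = A_q \tilde S$ for every $q\in S$, whence $AS = A\tilde S|_S$ as subsets of $T^*M$, and the two exponential maps agree there, $E = \tilde E$ on $AS$. With $V$ as in \eqref{eq:Vuniform}, every $\lambda\in V$ has $\pi(\lambda)\in S\subset \mathrm{cl}(S)$ and $\sqrt{2H(\lambda)} < r_0 \le \tilde\varepsilon(\pi(\lambda))$, so $V\subset \tilde V$. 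Since $S$ is open in $\tilde S$, the set $AS = A\tilde S|_S$ is open in $A\tilde S$, so $V$ is an open subset of $\tilde V$; the restriction $E = \tilde E$ to $V$ is therefore a $C^{k-1}$ diffeomorphism onto the open set $U := E(V)$, which is \cref{item:tubolarneigh1}. Moreover $E(\lambda)\in \tilde S$ if and only if $\lambda$ lies in the zero section, i.e.\ if and only if $\lambda\in S$, so $U\cap \tilde S = S$ and hence $U\setminus S \subset \tilde U\setminus \tilde S$.

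It remains to transfer \cref{item:tubolarneigh2,item:tubolarneigh3,item:tubolarneigh4,item:tubolarneigh5} from $\tilde S$ to $S$. For $p = E(\lambda)\in U$ with $\lambda\in V$, the curve $t\mapsto E(t\lambda)$ joins $\pi(\lambda)\in S$ to $p$ with length $\sqrt{2H(\lambda)}$, so $\ds(p) \le \sqrt{2H(\lambda)}$; on the other hand $S\subset \tilde S$ gives $\ds_{\tilde S}\le \ds$, while \cref{item:tubolarneigh2} for $\tilde S$ yields $\ds_{\tilde S}(p) = \sqrt{2H(\lambda)}$. Chaining these forces $\ds(p) = \ds_{\tilde S}(p) = \sqrt{2H(\lambda)}$, that is $\ds\circ E = \sqrt{2H}$ on $V$ and $\ds = \ds_{\tilde S}$ on $U$. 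Any minimizing geodesic from $S$ to $p$ then realises $\ds(p)=\ds_{\tilde S}(p)$ while starting in $S\subset \tilde S$, hence is also a minimizer from $\tilde S$; by the uniqueness in \cref{item:tubolarneigh2} for $\tilde S$ it coincides with $t\mapsto E(t\lambda)$, proving \cref{item:tubolarneigh2}. Since $\ds = \ds_{\tilde S}$ on $U$, $U\setminus S\subset \tilde U\setminus \tilde S$, and every minimizing geodesic from $S$ contained in $U\setminus S$ is a minimizing geodesic from $\tilde S$ contained in $\tilde U\setminus \tilde S$, the remaining items \cref{item:tubolarneigh3,item:tubolarneigh4,item:tubolarneigh5} follow by restricting the corresponding statements already established for $\tilde S$.

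The only genuinely non-formal step—and the point I expect to be the main obstacle—is the distance comparison guaranteeing that enlarging $S$ to $\tilde S$ creates no shortcut through $\tilde S\setminus S$: this is exactly where compactness of $\mathrm{cl}(S)$ (from boundedness and completeness) and the non-characteristic extension enter, ensuring both that $r_0>0$ and that the minimizer from $\tilde S$ actually emanates from $S$.
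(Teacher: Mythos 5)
Your proposal is correct and follows essentially the same route as the paper: apply \cref{thm:tubolarneigh} to the extension $\tilde S$, use compactness of $\mathrm{cl}(S)\subset\tilde S$ to get a uniform positive lower bound on $\tilde\varepsilon$, identify $AS$ with $A\tilde S|_S$ so that $V\subset\tilde V$, and transfer all items by showing $\delta=\delta_{\tilde S}$ on $U$ via the two-sided comparison (the trivial inequality $\delta_{\tilde S}\le\delta$ plus the fact that the unique minimizer from $\tilde S$ emanates from $S$). The only cosmetic difference is that the paper takes $r_0$ to be the supremum of all admissible radii rather than $\min_{\mathrm{cl}(S)}\tilde\varepsilon$ directly, which does not affect the argument.
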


\begin{proof}
We apply \cref{thm:tubolarneigh} to $\tilde{S}$. Let $\tilde{\varepsilon}:\tilde{S}\to \R_{>0}$ be the corresponding function, and $\tilde{V}\subset A\tilde{S}$, $\tilde{U}=E(\tilde{V})\subset M$ be the corresponding neighbourhoods such that \cref{item:tubolarneigh1,item:tubolarneigh2,item:tubolarneigh3,item:tubolarneigh4,item:tubolarneigh5} hold for the distance from $\tilde{S}$, denoted by  $\tilde{\delta}$. Thanks to \eqref{eq:separation}, we have
\begin{equation}
r:=\min\left\{ \tilde{\varepsilon}(q)\mid q \in \mathrm{cl}(S)\right\} >0.
\end{equation}
Let $V_r:=\{ \lambda \in AS \mid \sqrt{2H(\lambda)}< r \}\subset AS\subseteq A\tilde{S}$. By construction, on $U_r:=E(V_r)\subseteq \tilde{U}$, it holds $\tilde{\delta} \equiv \delta$. (The inequality $\tilde{\delta}\leq \delta$ is obvious. But then, if $p\in U_r$, there is a unique minimizing geodesic from $\tilde{S}$ given by $\gamma_t=E(t\lambda)$ with $\lambda =E^{-1}(p)$ $\in V_r$. This geodesic starts from $S$ so that it is also minimizing from $S$.)
Then, \cref{item:tubolarneigh1,item:tubolarneigh2,item:tubolarneigh3,item:tubolarneigh4,item:tubolarneigh5} hold also for $\delta$ on $U_r = E(V_r)$. Finally, set $r_0 = r_0(S)$ to be the supremum of all such $r$. (This does not depend on the choice of $\tilde{S}$.)
\end{proof}

\section{The volume of tubes}\label{sec:weyl}

We now prove asymptotic formulas for the volume of sub-Riemannian tubes (cf.\ \cref{def:tube}). We begin with \cref{thm:weyl-intro}, of which we recall the statement for convenience.

\begin{thm}
\label{thm:weyl}
Let $M$ be a smooth (or real-analytic) \sr manifold, equipped with a smooth (or real-analytic) measure $\mu$. Let $S \subset M$ be a bounded non-characteristic embedded submanifold of codimension $m\geq 1$, of class $C^2$ (without boundary) and with the extendibility property. Let $r_0=r_0(S)>0$ be its injectivity radius. Then the volume of the tube $r\mapsto \mu(T(r))$ is smooth (or real-analytic) on 
    $[0,r_0)$. Furthermore, there exists a continuous density $\sigma_m$ on $A^1S$, defined by
    \begin{equation}\label{eq:sigmam}
        \sigma_m := \lim_{r \to 0} \frac{E^*_r(\iota_{\nabla\delta} \mu)}{r^{m-1}} ,
    \end{equation}
    and continuous functions $w_{m}^{(j)}:A^1S \to \R$ defined for $j\in \N$ by
    \begin{equation}\label{eq:wmj}
        w_{m}^{(j)} := \lim_{r\to 0} \diverg^{j}_{\mu/\delta^{m-1}}(\nabla\delta) \circ E_r,
    \end{equation}
    such that $\mu(T(r))$ has the following Taylor expansion at $r=0$:
    \begin{equation}
    \label{eq:expansion}
         \mu(T(r))=\sum_{\substack{k\geq m \\ k-m \text{ even}}}  \frac{1}{k(k-m)!}\left(\int_{A^1 S} w_m^{(k-m)} \,\de\sigma_m \right) r^k,
    \end{equation}
    where $\de\sigma_m$ denotes the measure induced by $\sigma_m$.
    \end{thm}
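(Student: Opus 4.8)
The plan is to reduce the computation of $\mu(T(r))$ to an integral over the (relatively compact) unit annihilator bundle $A^1 S$ via a coarea argument, and then to read off the Taylor expansion from the behaviour of the integrand as the radius tends to zero. First I would use the Eikonal equation of \cref{item:tubolarneigh3}, which gives $\de\delta(\nabla\delta)=\|\nabla\delta\|^2=1$ and hence the top-form identity $\mu=\de\delta\wedge\iota_{\nabla\delta}\mu$ on $U\setminus S$. Since $T(r)=\{0\le\delta<r\}$ inside $U$ (by \cref{item:tubolarneigh2}) and $\mu(S)=0$, fibered integration along the $C^2$ submersion $\delta$ yields
\begin{equation}
\mu(T(r)) = \int_0^r A(\rho)\,\de\rho, \qquad A(\rho):=\int_{\partial T(\rho)} \iota_{\nabla\delta}\mu = \int_{A^1 S} \Omega(\rho,\cdot),\qquad \Omega(\rho,\lambda):=E_\rho^*(\iota_{\nabla\delta}\mu).
\end{equation}
Boundedness and extendibility of $S$ make $A^1 S$ relatively compact and all radii uniform (\cref{thm:uniformneigh}), which will legitimize differentiating under the integral sign throughout.

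The crucial input is that $\Omega$ extends to a smooth (resp.\ real-analytic) family across $\rho=0$, vanishing to order exactly $m-1$. I would prove this by observing that, for $\lambda\in A^1 S$, \cref{lem:diff_delta} together with the homogeneity of $H$ identifies $d_{E_\rho(\lambda)}\delta$ with $e^{\rho\vec H}(\lambda)$, so that $\nabla\delta$ along the geodesic equals $\sum_i\langle e^{\rho\vec H}(\lambda),X_i\rangle X_i$, which by \cref{lem:ham_flow_analytic} is smooth (analytic) in $(\rho,\lambda)$ jointly, including across $\rho=0$. Pulling back $\iota_{\nabla\delta}\mu$ by the jointly smooth/analytic map $(\rho,\lambda)\mapsto\pi\circ e^{\rho\vec H}(\lambda)$ then makes $\Omega$ jointly smooth/analytic. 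Since at $\rho=0$ this map collapses each fibre $A^1_q S\cong S^{m-1}$ to the point $q$, its differential kills the $m-1$ fibre directions there, forcing $\Omega(\rho,\cdot)=\rho^{m-1}R(\rho,\cdot)$ with $R$ smooth/analytic and $R(0,\cdot)$ a positive density; this is exactly the density $\sigma_m$ of \eqref{eq:sigmam}, which is therefore continuous (in fact smooth/analytic). Consequently $A(\rho)=\rho^{m-1}B(\rho)$ with $B(\rho):=\int_{A^1 S}R(\rho,\cdot)$ smooth/analytic on a \emph{two-sided} neighbourhood of $0$.

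Next I would compute the Taylor coefficients of $B$ through the iterated divergences. Writing $\phi_t$ for the flow of $\nabla\delta$ and using $E_{\rho+t}=\phi_t\circ E_\rho$ together with $\mathscr{L}_{\nabla\delta}(\iota_{\nabla\delta}\nu)=\diverg_\nu(\nabla\delta)\,\iota_{\nabla\delta}\nu$ for $\nu=\mu/\delta^{m-1}$, I get $\partial_\rho R = (\diverg_{\mu/\delta^{m-1}}(\nabla\delta)\circ E_\rho)\,R$ for $\rho>0$. Iterating the recursion \eqref{eq:recursive_relation} \emph{along the geodesic} and inducting gives $\partial_\rho^{\,j}R=(\diverg^{j}_{\mu/\delta^{m-1}}(\nabla\delta)\circ E_\rho)\,R$. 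Here the only derivatives of $\delta$ that ever enter are differentiations along $\nabla\delta$, i.e.\ $\partial_\rho$ along the geodesic; thus, although $\delta$ is merely $C^2$ and the ambient iterated divergences $\diverg^{j}$ need not exist as functions on $U\setminus S$ for $j\geq 2$, each $\diverg^{j}_{\mu/\delta^{m-1}}(\nabla\delta)\circ E_\rho$ is well defined and smooth/analytic in $\rho$ thanks to \cref{item:tubolarneigh5} (which controls $\delta,X\delta,YX\delta$ along geodesics). Letting $\rho\to 0^+$ produces the continuous functions $w_m^{(j)}$ of \eqref{eq:wmj} and the identity $\partial_\rho^{\,j}R|_{\rho=0}=w_m^{(j)}\sigma_m$, whence $B^{(j)}(0)=\int_{A^1 S}w_m^{(j)}\,\de\sigma_m$.

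Finally I would settle parity and assemble. The homogeneity of $H$ gives $E_{-\rho}=E_\rho\circ(-\mathrm{Id})$ on $A^1 S$, and since integrating a density is invariant under the diffeomorphism $\lambda\mapsto-\lambda$ of $A^1 S$, it follows that $B(-\rho)=\int_{A^1 S}R(-\rho,\cdot)=\int_{A^1 S}R(\rho,\cdot)=B(\rho)$, i.e.\ $B$ is \emph{even}; being also two-sided smooth/analytic at $0$, all its odd derivatives vanish, so $\int_{A^1 S}w_m^{(j)}\,\de\sigma_m=0$ for odd $j$. Substituting $B(\rho)=\sum_{j\geq 0}\frac{1}{j!}\big(\int_{A^1 S} w_m^{(j)}\de\sigma_m\big)\rho^j$ into $\mu(T(r))=\int_0^r\rho^{m-1}B(\rho)\,\de\rho$ and integrating termwise yields \eqref{eq:expansion} with $k=m+j$, while smoothness/analyticity of $r\mapsto\mu(T(r))$ on $[0,r_0)$ follows because $\rho^{m-1}B(\rho)$ is smooth/analytic and antiderivatives preserve this class. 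I expect the main obstacle to be the regularity analysis at $\rho=0$: pinning down the exact order-$(m-1)$ vanishing with smooth/analytic remainder $R$, and reconciling the arbitrarily high iterated divergences with the mere $C^2$ regularity of $\delta$. Both difficulties are overcome by transferring every derivative onto the Hamiltonian flow via \cref{lem:diff_delta,lem:ham_flow_analytic} and \cref{item:tubolarneigh5}; by contrast the parity statement, which might look delicate, reduces to the clean evenness argument above.
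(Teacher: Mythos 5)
Your argument is correct and is essentially the paper's own proof in coordinate-free dress: the paper performs the same radial decomposition (graph coordinates on $S$ plus polar coordinates on the fibres of $AS$), gains smoothness in the radius through the same reparametrization identity $E(r\lambda)=\pi\circ\Phi(r,\lambda)$ for the extended Hamiltonian flow, identifies the Taylor coefficients with the iterated divergences via the same Lie-derivative computation, and obtains the parity of the coefficients from the antipodal substitution $u\mapsto -u$ on the sphere, which is your evenness of $B$. The only caveats are that, since $A^1S$ is merely $C^1$, the family $\Omega(\rho,\cdot)$ is not \emph{jointly} smooth in $(\rho,\lambda)$ but only smooth in $\rho$ with all $\rho$-derivatives continuous in $(\rho,\lambda)$ --- which is all your argument actually uses --- and in the real-analytic case one should record the locally uniform bounds on the $\rho$-derivatives over the compact $A^1S$ that are needed to sum the Taylor series of $B$ and conclude analyticity of $r\mapsto\mu(T(r))$.
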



\begin{proof}
We apply, and use the notation of, \cref{thm:tubolarneigh,thm:uniformneigh}. Then, for $r\in (0,r_0)$, it holds
\begin{equation}
\mu(T(r)) = \int_{E\left(\left\{\lambda \in AS \mid \sqrt{2H(\lambda)}<r\right\}\right)} \mu=\int_{AS\cap \left\{\sqrt{2H}<r\right\}} E^*\mu.
\end{equation}
Let $\tilde{S}$ be the extension coming from the extendibility property, see \cref{def:extendible}. Thanks to the latter, we can cover $S$ with a \emph{finite} number of charts where $S$ is the graph of a $C^2$ function. Thus, by a partition of unity argument, it is sufficient to consider the case in which $\mathrm{cl}(S)$ is contained in a single chart and is diffeomorphic to the closure of an open set of $\R^{n-m}$. More precisely, we assume there are smooth (or real-analytic) coordinates $(x,y)\in  \R^{n-m}\times \R^m$ on an open neighbourhood $\mathcal{O}\subset M$ such that $\mathrm{cl}(S)\subset \mathcal{O}$, and there exists a $C^2$ function $h:\R^{n-m}\to \R^m$ and an open set $B\subset \R^{n-m}$ such that 
\begin{equation}
S = \{(x,h(x)) \in \R^{n-m} \times \R^m \mid x\in B\}.
\end{equation}
In these coordinates let $f : \R^n \to \R$ be  the smooth (or real-analytic) positive function such that
\begin{equation}
\mu|_{\mathcal{O}} = f d x\wedge d y.
\end{equation}
Choose $C^1$ sections $\nu_1,\dots,\nu_m$ of $\ann \tilde{S}$ (defined on a neighbourhood of $\mathrm{cl}(S)$), orthonormal w.r.t.\ the Hamiltonian scalar product, so that any $\lambda \in \pi^{-1}(\mathcal{O}\cap \mathrm{cl}(S))\cap A\tilde{S}\subseteq A\tilde{S}$ can be written as $\lambda = \sum_{i=1}^m p_i \nu_i$. This yields coordinates $(x,p) \in \mathrm{cl}(B)\times \R^{m}$ on $\pi^{-1}(\mathcal{O}\cap \mathrm{cl}(S))\cap A\tilde{S}$, such that $2H(x,p) = |p|^2$. In these coordinates, we have
\begin{equation}
E^*\mu = J(x,p) f(E(x,p)) d x\wedge\!d p, \qquad J(x,p) = \det\left(\frac{\partial E}{\partial x},\frac{\partial E}{\partial p} \right).
\end{equation}
Here, $E: \mathrm{cl}(B)\times \R^{m} \to \R^n$ denotes the normal exponential map in coordinates $(x,p)$, namely $E(x,p) = \pi \circ e^{\vec{H}}(\sum_{i=1}^m p_i \nu_i|_{(x,h(x))})$. Taking polar coordinates $p=\varrho u$ in the fibers, we have
\begin{equation}\label{eq:98}
\mu(T(r)) = \int_0^r \varrho^{m-1} \left(\int_{B \times \mathbb{S}^{m-1}} J(x,\varrho u) f(E(x,\varrho u)) \de x \de u \right) \!\de\varrho,
\end{equation}
where $\mathrm{d} u$ is the standard measure on the unit sphere $\mathbb{S}^{m-1}$.

Under our assumptions, $E(x,p)$ is $C^1$ and $J(x,p)$ is $C^0$, so that \eqref{eq:98} only suggests $C^1$ regularity in $r$. However, we claim that the map $\Theta : \mathrm{cl}(B)\times [0,\infty)\times \mathbb{S}^{m-1}\to \R$ given by
\begin{equation}\label{eq:Theta}
\Theta(x,\varrho,u) = J(x,\varrho u)f(E(x,\varrho u)),
\end{equation}
is smooth w.r.t.\ $\varrho$, and all its derivatives w.r.t.\ $\varrho$ are continuous. Furthermore, in the real-analytic case, $\Theta$ is real-analytic w.r.t.\ $\varrho$, locally uniformly w.r.t.\ $(x,u)$. By this we mean that, for any compact $K\subset \mathrm{cl}(B)\times [0,\infty)\times \mathbb{S}^{m-1}$, there exists $C>0$ depending only on $K$ such that
\begin{equation}
\sup_{(x,\varrho,u)\in K} \frac{\partial^k\Theta}{\partial\varrho^k}(x,\varrho,u) \leq C^{k+1}k!,\qquad \forall\, k\in \N.
\end{equation}
From the claim, it follows that $r\mapsto \mu(T(r))$ is smooth (or real-analytic) on $[0,r_0)$, and one can differentiate under the integral sign an infinite number of times in \eqref{eq:98}.

The proof of the claim is based on the following observation. Recall the extended Hamiltonian flow $\Phi : \R\times T^*M\to T^*M$ defined in \eqref{eq:extended_hflow}. The reparametrization identity $\Phi(1,t \lambda)=t\Phi(t,\lambda)$ for all $(t,\lambda)\in T^*M$ corresponds, in our coordinates, to the following:
\begin{equation}\label{eq:Et}
E(x,\varrho u) =  \pi\circ\Phi\left(\varrho,x,h(x), \sum_{i=1}^m  u_i \nu_i|_{(x,h(x))}\right) =:E^\varrho(x,u).
\end{equation}
Therefore, for every $(x,\varrho,u)\in \mathrm{cl}(B)\times [0,\infty)\times \mathbb{S}^{m-1}$, it holds:
\begin{equation}\label{eq:ordine}
J^\varrho(x,u) = \varrho^m J(x,\varrho u), \qquad \text{where}\quad J^\varrho(x,u) := \det\left(\frac{\partial E^\varrho}{\partial x},\frac{\partial E^\varrho}{\partial u}\right).
\end{equation}
Since $\Phi$ is smooth (or real-analytic), it follows immediately that the function $(x,\varrho,u)\mapsto J^\varrho(x,u)$ is continuous, smooth w.r.t.\ $\varrho$, and all the derivatives w.r.t.\ $\varrho$ are continuous. Furthermore, in the real-analytic case, $(x,\varrho,u)\mapsto J^\varrho(x,u)$ is real-analytic w.r.t.\ $\varrho$, locally uniformly w.r.t.\ $(x,u)$. It follows from \eqref{eq:ordine} and elementary analysis that
\begin{equation}
J(x, \varrho u) = \int_0^1 \de s_1 \int_0^{s_2} \de s_2\, \dots \int_0^{s_{m-1}} \de s_m \left.\frac{\partial^m J^t(x,u)}{\partial t^m}\right|_{t = \varrho s_m} .
\end{equation}
The claimed regularity follows immediately for the first factor in \eqref{eq:Theta}, while, for the second factor, it is a consequence of the smoothness (or real-analyticity) of $f$. This concludes the proof of the claim and shows the regularity of the volume of the tube w.r.t. $r\in [0,r_0)$.

We now characterize the Taylor expansion at $r=0$. It is clear that derivatives of order $k<m$ of \eqref{eq:98} vanish at $r=0$. While, for $k\geq m$, the Leibniz rule yields
\begin{equation}\label{eq:derivative}
\left.\frac{\de^k}{\de r^k}\right|_{r=0} \mu(T(r)) = \frac{(k-1)!}{(k-m)!} \int_{B\times \mathbb{S}^{m-1}} \left.\frac{\partial^{k-m}}{\partial r^{k-m}}\right|_{r=0} J(x,r u)f(E(x,r u)) \de x \de u.
\end{equation}
Note that for all $k\geq m$, by differentiating at $r=0$, it holds
\begin{equation}
\left.\frac{\partial^{k-m}}{\partial r^{k-m}}\right|_{r=0} J(x,r u)f(E(x,r u)) = (-1)^{k-m}\left.\frac{\partial^{k-m}}{\partial r^{k-m}}\right|_{r=0} J(x,-r u)f(E(x,-r u)),
\end{equation}
so that the right hand side of \eqref{eq:derivative} vanishes if $k-m$ is odd, upon integration on $\mathbb{S}^{m-1}$.

We show that \eqref{eq:sigmam} and \eqref{eq:wmj} are well-defined and then we prove \eqref{eq:expansion}. We use polar coordinates $(x,r,u) \in \mathrm{cl}(B)\times \R \times \mathbb{S}^{m-1}$ on $\pi^{-1}(\mathcal{O}\cap \mathrm{cl}(S))\cap A\tilde{S}$, noting that the slice $r=1$ corresponds to $A^1 S$. Note also that $\delta(E(x,ru)) = r$ and $E_*\partial_r = \nabla\delta$. Furthermore, in these coordinates $E_r(x,u) = E(x,ru)$. Thus
\begin{align}
\sigma_m = \lim_{r\to 0}\frac{E_r^*(\iota_{\nabla\delta} \mu)}{r^{m-1}} & = \lim_{r\to 0}\frac{J(x,ru)f(E(x,ru))}{r^{m-1}} r^{m-1} \iota_{\partial_r} \left( d r \wedge d x \wedge d u\right) \\
& = J(x,0)f(x,0) d x \wedge d u,\label{eq:sigmam_coords}
\end{align}
showing that $\sigma_m$ is a a well-defined and continuous density on $A^1 S$.

Similarly, by \cref{def:diverg} of iterated divergence we have
\begin{equation}
\diverg^{j}_{\mu/\delta^{m-1}}(\nabla\delta) \frac{\mu}{\delta^{m-1}} = 
\mathscr{L}^{j}_{\nabla\delta}\left(\frac{\mu}{\delta^{m-1}}\right), \qquad \text{on } U\setminus S.
\end{equation}
Taking the pull-back with $E$ we obtain in polar coordinates
\begin{multline}
\left(\diverg^{j}_{\mu/\delta^{m-1}}(\nabla\delta) \circ E(x,ru) \right) J(x,ru)f(E(x,ru)) d r \wedge d x \wedge d u = \nonumber \\ 
\mathscr{L}_{\partial_r}^j \Big( J(x,ru)f(E(x,ru)) d r \wedge d x \wedge d u\Big)  =\frac{\partial^j}{\partial r^{j}}J(x,ru)f(E(x,ru)) d r \wedge d x \wedge d u.
\end{multline}
Therefore we obtain in these coordinates
\begin{equation}\label{eq:wjm_coords}
w_m^{(j)}(x,u) =\lim_{r\to 0} \diverg^{j}_{\mu/\delta^{m-1}}(\nabla\delta) \circ E_r(x,u) =\frac{1}{J(x,0)f(x,0)}\left. \frac{\partial^j}{\partial r^{j}}\right|_{r=0} J(x,ru)f(E(x,ru)).
\end{equation}
This shows that the $w_m^{(j)}$ are well-defined and continuous functions on $A^1S$.

Putting together \eqref{eq:sigmam_coords} and \eqref{eq:wjm_coords} in \eqref{eq:derivative} we obtain
\begin{equation}
    \left.\frac{\de^k}{\de r^k}\right|_{r=0} \mu(T(r)) = \frac{(k-1)!}{(k-m)!}  \int_{A^1 S}w_m^{(k-m)}\, \de \sigma_m,
\end{equation}
concluding the proof of the Taylor expansion \eqref{eq:expansion}.
\end{proof}


We prove \cref{thm:Steiner-intro}, of which we recall the statement for the reader's ease. 

\begin{thm}\label{thm:Steiner}
 In the setting of \cref{thm:weyl}, assume furthermore that $S$ is a two-sided hypersurface. Then the volume of the half-tube $r \mapsto \mu(T^+(r))$ is smooth (or real-analytic) on $[0,r_0)$.
 Moreover, $\mu(T^+(r))$ has the following Taylor expansion at $r=0$:
     \begin{equation} \label{eq:Steiner_formula}
        \mu(T^+(r))=\sum_{k\geq 1 }  \frac{1}{k!}\left(\int_{S} \diverg^{k-1}(\nabla\delta_{\mathrm{sign}}) \de \sigma\right) r^k,
    \end{equation}
    where $\mathrm{d}\sigma$ denotes the \sr perimeter measure on the hypersurface $S$.
\end{thm}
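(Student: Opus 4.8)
The plan is to specialise the proof of \cref{thm:weyl} to the half-tube in codimension $m=1$, working throughout with the signed distance $\delta_{\mathrm{sign}}$, which by \cref{cor:twoside} is $C^k$ up to and including $S$. I would repeat the coordinate computation in the proof of \cref{thm:weyl}, but restrict the fibre integration of the normal exponential map to the positive sheet $A^+S$. Since the fibre of $AS$ is one-dimensional, a $C^1$ section $\nu$ of $A\tilde{S}$ with $2H(\nu)=1$ gives a coordinate $\lambda=p\,\nu$, with the positive half corresponding to $p>0$ and $\sqrt{2H(\lambda)}=p$. Writing $\mu|_{\mathcal O}=f\,dx\wedge dp$ in adapted coordinates, the half-tube volume becomes
\begin{equation}
\mu(T^+(r)) = \int_0^r\!\left(\int_B J(x,p)\,f(E(x,p))\,dx\right)dp,\qquad J(x,p)=\det\!\left(\tfrac{\partial E}{\partial x},\tfrac{\partial E}{\partial p}\right).
\end{equation}

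The regularity in $r$ is then more direct than in \cref{thm:weyl}, since for $m=1$ there is no polar factor $r^{m-1}$ and no spherical singularity at the origin. From the reparametrisation identity $\Phi(1,p\nu)=p\,\Phi(p,\nu)$ one gets $E(x,p)=\pi\circ\Phi(p,\nu|_{(x,h(x))})$, which by \cref{lem:ham_flow_analytic} is smooth (real-analytic) in $p$ up to $p=0$, locally uniformly in $x$; hence so is $J(x,p)f(E(x,p))$. Thus $r\mapsto\mu(T^+(r))$ is the integral of a smooth (real-analytic) function and is itself smooth (real-analytic) on $[0,r_0)$. The decisive structural difference with the full tube is that here one does \emph{not} symmetrise over $\mathbb S^0=\{\pm1\}$, so the cancellation of even-order terms observed in \cref{thm:weyl} does not take place and all powers $r^k$, $k\geq1$, survive.

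It remains to identify the Taylor coefficients. Setting $m=1$ in \eqref{eq:sigmam_coords} and \eqref{eq:wjm_coords}, I would identify $\sigma_1$, restricted to the positive unit sheet $A^1S\cap A^+S$ (which $\pi$ maps diffeomorphically onto $S$), with the sub-Riemannian perimeter measure $\sigma=\iota_{\nabla\delta_{\mathrm{sign}}}\mu|_S$: indeed $E_r\to\pi$ and $\nabla\delta\to\nabla\delta_{\mathrm{sign}}$ as $r\to0$, so $\sigma_1=\lim_{r\to0}E_r^*(\iota_{\nabla\delta}\mu)$ pushes forward to $d\sigma$. Likewise, since $\mu/\delta^{m-1}=\mu$ for $m=1$, the functions $w_1^{(j)}=\lim_{r\to0}\diverg^j_\mu(\nabla\delta)\circ E_r$ extend by \cref{cor:twoside} to $\diverg^j_\mu(\nabla\delta_{\mathrm{sign}})$ on $S$. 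Feeding these into the derivative formula and invoking Taylor's theorem yields \eqref{eq:Steiner_formula}.

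The step I expect to be the main obstacle is precisely this last identification of the limit objects $\sigma_1$ and $w_1^{(j)}$ with the intrinsic perimeter measure and the iterated divergences of the signed distance — in particular, justifying that the $r\to0$ limits may be computed with $\delta_{\mathrm{sign}}$ (legitimate by \cref{cor:twoside}) and that the projection $\pi:A^1S\cap A^+S\to S$ transports $\sigma_1$ to $d\sigma$. By contrast, the regularity and the survival of all Taylor coefficients are routine once one notes that codimension one removes the polar singularity.
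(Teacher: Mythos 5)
Your proposal is correct and follows essentially the same route as the paper: the paper likewise repeats the coordinate computation of \cref{thm:weyl} restricted to the positive sheet of the one-dimensional fibre, writes $\mu(T^+(r))=\int_0^r\int_B J(x,p)f(E(x,p))\,\de x\,\de p$, notes that the absence of spherical symmetrisation keeps all powers $r^k$, and identifies $\sigma_1$ and $w_1^{(j)}$ on $A^{1+}S\simeq S$ with the perimeter measure and the iterated divergences of $\delta_{\mathrm{sign}}$. Your observation that the reparametrisation identity gives smoothness of $J(x,p)f(E(x,p))$ in $p$ directly, without the iterated-integral extraction needed for $m>1$, is a valid (slightly streamlined) version of the paper's regularity argument.
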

\begin{proof}
For proving the regularity of the volume of the half-tube, one can repeat verbatim the proof of \cref{thm:weyl}. The main difference is that $A^1S=\ann S\cap \{\sqrt{2H}=1\}$ now has (locally) two connected components. Using the coordinates $(x,p)$ we can write 
\begin{equation}
    \mu(T^+(r)) = \int_0^r \int_{B} J(x,p) f(E(x,p)) \de x \de p,\qquad\text{for }r<r_0. 
\end{equation}
Similarly to \eqref{eq:derivative}, this implies that
\begin{equation}\label{eq:derivative2}
\left.\frac{\de^k}{\de r^k}\right|_{r=0} \mu(T^+(r)) = \int_{B} \left.\frac{\partial^{k-1}}{\partial r^{k-1}}\right|_{r=0} J(x,r)f(E(x,r)) \de x.
\end{equation}
Since we are no longer integrating on a sphere, the odd coefficients do not vanish. Arguing as in the proof of \cref{thm:weyl} we rewrite \eqref{eq:derivative2} in terms of the functions $w_m^{(j)}$ and density $\sigma_m$:
\begin{equation}
    \left.\frac{\de^k}{\de r^k}\right|_{r=0} \mu(T^+(r)) = \int_{A^{1+}S} w^{(k-1)}_1 \de \sigma_1,
\end{equation}
where $A^{1+}S =A^+S \cap A^1S$ is the positive component of the unit annihilator bundle. Note that $\pi:AS \to S$ restricts to a diffeomorphism between $A^{1+}S$ and $S$. Under this identification, we identify $w^{(j)}_1$ with the continuous function $\diverg^{j-1}_{\mu}(\nabla\delta_{\mathrm{sign}})$ on $S$. Similarly, we identify the density $\sigma_1$ with the continuous density $\sigma = \iota_{\nabla\delta} \mu$ on $S$. Formula \eqref{eq:Steiner_formula} is then proved.
\end{proof}

\section{Polynomial character of iterated divergences}
\label{sec:polynomials}

In this section, we investigate to what extent the coefficients of the Weyl's tube formula are \emph{polynomial} functions in the derivatives of the distance $\delta$ from the submanifold. 
In particular, we prove the general version of \cref{thm:div_poly-intro}, as well as \cref{prop:forsedatogliere-intro}.

\begin{lem}
\label{lem:iter_div_poly}
   Let $M$ be a smooth (or real-analytic) \sr manifold, equipped with a smooth (or real-analytic) measure $\mu$. Let $Y_1,\ldots,Y_n$ be a local smooth (or real-analytic) frame for $TM$ defined on a neighbourhood $\mathcal O\subset M$. Then, for all $k \in \N$ there exists a polynomial function $P^k$ in $n+n^2$ variables, with smooth (or real-analytic) coefficients on $\mathcal{O}$, homogeneous of degree $k$, such that for any $C^2$ non-characteristic submanifold $S\subset M$ of codimension $m\geq 1$, it holds 
    \begin{equation}
    \label{eq:Pk}
        \diverg^k_{\mu}(\nabla\delta)=P^k\left(\dots,Y_\alpha\delta,\dots,Y_{\alpha}Y_{\beta}\delta,\dots\right),\qquad \text{on } \mathcal{O}\cap U\setminus S,
    \end{equation}
    where in the variables $Y_\alpha\delta$, $Y_\alpha Y_\beta\delta$ the indices $\alpha,\beta$ run over the set $\{1,\dots,n\}$. If $S$ is a two-sided hypersurface, then \eqref{eq:Pk} for $m=1$ holds on the whole $U$, replacing $\delta$ with $\delta_{\mathrm{sign}}$.
\end{lem}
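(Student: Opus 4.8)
The plan is to induct on $k$ using the recursion \eqref{eq:recursive_relation}, which for the vector field $\nabla\delta$ and the measure $\mu$ reads
\begin{equation}
\diverg^{k+1}_\mu(\nabla\delta)=\diverg_\mu(\nabla\delta)\,\diverg^{k}_\mu(\nabla\delta)+\nabla\delta\big(\diverg^{k}_\mu(\nabla\delta)\big).
\end{equation}
I grade the $n+n^2$ polynomial variables by assigning weight $1$ to each $Y_\alpha\delta$ and to each $Y_\alpha Y_\beta\delta$, so that the assertion becomes: $\diverg^k_\mu(\nabla\delta)$ is homogeneous of degree $k$ in these variables, with coefficients assembled from the structure functions of the frame and of $\mu$ (smooth, or real-analytic in the analytic case). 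Writing $X_i=\sum_\alpha c_{i\alpha}Y_\alpha$ and recalling $\nabla\delta=\sum_i (X_i\delta)X_i$, the cases $k=0$ (where $\diverg^0_\mu\equiv 1$) and $k=1$ are immediate, since expanding $\diverg_\mu(\nabla\delta)=\sum_i\big[(X_i\delta)\diverg_\mu(X_i)+X_iX_i\delta\big]$ in the frame $\{Y_\alpha\}$ produces a degree-$1$ polynomial in $Y_\alpha\delta$ and $Y_\alpha Y_\beta\delta$. I emphasize that the polynomials $P^k$ so produced depend neither on $S$ nor on its codimension $m$: every ingredient below uses only the horizontal gradient, the eikonal equation $2H(d\delta)=1$ from \cref{item:tubolarneigh3}, and the Hamiltonian-flow invariance of the section $d\delta$.

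The inductive step rests on two identities. Set $h_X(\lambda):=\langle\lambda,X\rangle$ for a vector field $X$. By \cref{lem:diff_delta}, identities \eqref{eq:identity_final_covector} and \eqref{eq:tautology_distance_t_s} say exactly that the section $d\delta$ intertwines the flow of $\nabla\delta$ on the base with the Hamiltonian flow $e^{s\vec H}$ on $T^*M$; differentiating at $s=0$ gives, for every smooth function $F$ on $T^*M$,
\begin{equation}
\nabla\delta(F\circ d\delta)=\{H,F\}\circ d\delta.
\end{equation}
Taking $F=h_X$ (so $h_X\circ d\delta=X\delta$) and using that $H$ is fiber-quadratic while $h_X$ is fiber-linear (see \eqref{eq:Hamiltonian}), the bracket $\{H,h_X\}$ is fiber-quadratic, whence $\nabla\delta(X\delta)=\{H,h_X\}\circ d\delta=\pm\sum_i (X_i\delta)\,([X_i,X]\delta)$, a degree-$2$ polynomial in the first derivatives. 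For the second derivatives I would differentiate the same intertwining once more — this is precisely the transport of the $2$-jet of $\delta$ by the linearised flow $\Psi$ underlying \cref{item:tubolarneigh5}, i.e.\ the sub-Riemannian Riccati equation for the horizontal Hessian of $\delta$. Concretely, I would establish the commutator identity $\nabla\delta\big(Y_\alpha(X\delta)\big)=Y_\alpha\big(\nabla\delta(X\delta)\big)+[\nabla\delta,Y_\alpha](X\delta)$ and expand each summand in the frame; weighing the terms then shows that $\nabla\delta(Y_\alpha Y_\beta\delta)$ is again a degree-$2$ polynomial in $Y_\gamma\delta$ and $Y_\gamma Y_\eta\delta$.

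With these identities the induction closes. In the recursion the first summand $\diverg_\mu(\nabla\delta)\,\diverg^k_\mu(\nabla\delta)$ is a product of a degree-$1$ and a degree-$k$ polynomial, hence degree $k+1$. In the second summand $\nabla\delta(P^k)$, the chain rule, combined with the facts that $\nabla\delta$ raises the weight of each variable $Y_\alpha\delta$, $Y_\alpha Y_\beta\delta$ by exactly $1$ (the degree-$2$ identities above) and of the smooth coefficients by $1$, produces a homogeneous polynomial of degree $k+1$. The third derivatives of $\delta$ that would naively appear upon differentiating the second-derivative variables occur only inside the combination $\nabla\delta(Y_\alpha Y_\beta\delta)$, which the Riccati identity re-expresses without them; this cancellation is the heart of the statement.

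Finally I would handle regularity and the two variants. Since $S$ is only $C^2$, one has $\delta\in C^2(U\setminus S)$ and ambient third derivatives need not exist; however $\nabla\delta$ differentiates along geodesics from $S$, and by \cref{item:tubolarneigh5} the functions $\delta$, $Y_\alpha\delta$, $Y_\alpha Y_\beta\delta$ are smooth (or real-analytic) along such geodesics. This ensures that the iterated Lie derivatives in \cref{def:diverg} exist and that all manipulations above are legitimate along the geodesics foliating $U\setminus S$. In the analytic case the coefficients of $P^k$ are algebraic expressions in the $c_{i\alpha}$, in $\diverg_\mu(X_i)$, and in bracket components, hence real-analytic (cf.\ \cref{lem:ham_flow_analytic}). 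For a two-sided hypersurface ($m=1$) the same computation applies verbatim to $\delta_{\mathrm{sign}}$, which by \cref{cor:twoside} is $C^2$ with unit horizontal gradient on all of $U$, yielding \eqref{eq:Pk} up to $S$. I expect the main obstacle to be the second-derivative (Riccati) identity together with the low-regularity bookkeeping: verifying that the third-order terms genuinely cancel, and justifying the commutator manipulation despite $\delta\notin C^3$, by relying on smoothness along geodesics rather than on ambient differentiability.
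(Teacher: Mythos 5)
Your overall strategy coincides with the paper's: induct on $k$ via \eqref{eq:recursive_relation}, grade the variables $Y_\alpha\delta$, $Y_\alpha Y_\beta\delta$ with weight one, and reduce the inductive step to showing that $\nabla\delta(Y_i\delta)$ and $\nabla\delta(Y_iY_j\delta)$ are homogeneous of degree $2$ in those variables. Your treatment of the first quantity is correct and is a mild variant of the paper's: the paper simply expands $\nabla\delta(Y_i\delta)=a^\alpha_h a^\beta_h(Y_\alpha\delta)(Y_\beta Y_i\delta)$, which is already of degree $2$ with no appeal to the Hamiltonian flow, while your identity $\nabla\delta(X\delta)=\{H,h_X\}\circ d\delta$ is equivalent to it modulo the once-differentiated eikonal equation.

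The gap is in the decisive step, the degree-$2$ identity for $\nabla\delta(Y_iY_j\delta)$, which you only sketch. The commutator identity you propose, $\nabla\delta\bigl(Y_\alpha(X\delta)\bigr)=Y_\alpha\bigl(\nabla\delta(X\delta)\bigr)+[\nabla\delta,Y_\alpha](X\delta)$, is exactly the manipulation that does not make classical sense when $S$ is only $C^2$: its left-hand side involves third derivatives of $\delta$, and \cref{item:tubolarneigh5} gives smoothness of $\delta$, $X\delta$, $YX\delta$ only along each individual geodesic, which by itself does not justify commuting a transverse derivative $Y_\alpha$ past $\nabla\delta$. You correctly identify this as the main obstacle but do not resolve it. The paper's resolution is concrete: interpret the third-order terms as distributions, commute frame derivatives to isolate the single genuinely third-order distribution $a^\alpha_h a^\beta_h(Y_\alpha\delta)(Y_iY_jY_\beta\delta)$ as in \eqref{eq:three_der}, and then differentiate the eikonal equation $\|\nabla\delta\|^2=1$ twice along $Y_i,Y_j$ in the distributional sense to obtain \eqref{eq:diffEikonal}, which identifies that distribution with minus a continuous homogeneous polynomial of degree $2$ in $Y_\alpha\delta$, $Y_\alpha Y_\beta\delta$. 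That single substitution is what legitimizes the whole computation at $C^2$ regularity, and it is the step your proposal still owes; the rest of your argument (the $k=0,1$ cases, the homogeneity bookkeeping in the recursion, and the two-sided variant via \cref{cor:twoside}) is sound and matches the paper.
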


\begin{proof}
    We proceed by induction on $k$. Let $\{X_1,\ldots,X_N\}$ be a smooth (or real-analytic) generating family for the \sr structure. 
    In the following, we consider the indices $\alpha,\beta,\varrho\in \{1,\ldots,n\}$ and $h \in \{1,\ldots,N\}$ and we adopt the convention for which repeated indices are summed over the corresponding range.
    We have that $X_h=a^\alpha_{h}Y_\alpha$ for some smooth (or real-analytic) functions $a^\alpha_{h}$ defined on $\mathcal O$.
    Hence,
    \begin{equation}
        \nabla\delta= (X_h\delta)X_h= a^\alpha_{h}a^\beta_{h}(Y_\alpha\delta )Y_\beta.
    \end{equation}
    The case $k=0$ is trivial since $\diverg^0_{\mu}(\nabla\delta)=1$.
    For the case $k=1$ we have that 
      \begin{align}\label{eq:P0}
     \diverg_{\mu}^1(\nabla\delta)&=(X_hX_h\delta)+(X_h\delta)\diverg_\mu(X_h)\\
        &= 
        a^\alpha_{h}a^\beta_{h}\left[(Y_\alpha Y_\beta\delta)+(Y_\alpha\delta)\diverg_\mu (Y_\beta)\right]+a^\alpha_{h}(Y_\alpha a^\beta_{h})(Y_\beta\delta)+a^\alpha_{h}(Y_\beta a^\beta_{h})(Y_\alpha\delta).
    \end{align}
Thus, $\diverg_{\mu}^1(\nabla\delta)$ is a homogeneous polynomial of degree $1$ with smooth (or real-analytic) coefficients in the variables $Y_\alpha\delta,Y_\alpha Y_\beta \delta$, that we denote $P^1$.
    
    Let us suppose that the statement of the lemma is true for some $k\geq 1$, i.e.\ 
    \begin{equation}
        \diverg^k_{\mu}(\nabla\delta)= P^k\left(\dots,Y_\alpha\delta,\dots,Y_{\alpha}Y_{\beta}\delta,\dots\right).
    \end{equation}
    We prove the statement for $\diverg^{k+1}_{\mu}(\nabla\delta)$.
    Exploiting the recursive relation in \eqref{eq:recursive_relation}, we obtain that
    \begin{equation}
    \label{eq:Pk+1}
        \diverg^{k+1}_{\mu}(\nabla\delta)
        =P^1 P^k  +\nabla\delta(P^k ),
    \end{equation}
 where we omit the explicit dependence on the variables. We claim that \eqref{eq:Pk+1} is homogeneous of degree $k+1$.  By \eqref{eq:P0} and the inductive hypothesis, $P^1P^k$ satisfies the claim. For $\nabla\delta(P^k)$,
    by the Leibniz rule, it is sufficient to show that, for any fixed $i,j=1,\ldots,n$,
    \begin{equation}\label{eq:variables}
                \nabla\delta(Y_i\delta) 
                  \qquad \text{and} \qquad
        \nabla\delta(Y_iY_j\delta)
    \end{equation}
    are homogeneous polynomials of degree $2$ in the variables $Y_\alpha\delta,Y_\alpha Y_\beta \delta$, where $\nabla\delta(Y_iY_j\delta)$ has to be intended in the distributional sense. The claim for $\nabla\delta(Y_i\delta)$ is immediate, since it holds 
    \begin{equation}
        \nabla\delta(Y_i\delta) 
        =  a^\alpha_{h}a^\beta_{h}(Y_\alpha\delta)(Y_\beta Y_i\delta).
    \end{equation}
    To deal with the last item in \eqref{eq:variables}, consider the bracket relations 
    $[Y_\alpha,Y_\beta]=c^\varrho_{\alpha\beta}Y_\varrho$, where $c^\varrho_{\alpha\beta}: \mathcal O \to \R$ are smooth (or real-analytic) We obtain the following distributional identity:
    \begin{equation}
    \label{eq:three_der}
    \begin{split}
        \nabla\delta(Y_iY_j\delta)        &=   a^\alpha_{h}a^\beta_{h}
            (Y_\alpha\delta)(Y_\beta Y_iY_j\delta) \\ 
        &=   a^\alpha_{h}a^\beta_{h}
            (Y_\alpha\delta)\left[(Y_iY_\beta Y_j\delta)
            +c^\varrho_{\beta i}(Y_\varrho Y_j\delta) \right]\\
        &=   a^\alpha_{h}a^\beta_{h}
            (Y_\alpha\delta)(Y_iY_jY_\beta\delta) 
        +a^\alpha_{h}a^\beta_{h}(Y_\alpha\delta)\left[Y_i\left(c^\varrho_{\beta j}(Y_\varrho\delta)\right)
                +c^\varrho_{\beta i}(Y_\varrho Y_j \delta)\right]. 
    \end{split}
    \end{equation}
    To get rid of the term containing the third order derivative of $\delta$, we exploit the Eikonal equation $\|\nabla \delta\|=1$, valid on $U\setminus S$. More precisely, differentiating it twice along the vector fields $Y_i,Y_j$ we obtain, in the sense of distributions,
    \begin{equation}
    \label{eq:diffEikonal}
        Y_iY_j(\|\nabla\delta\|^2)=0\qquad\implies\qquad(X_h\delta)(Y_iY_jX_h\delta)+ (Y_iX_h\delta)(Y_jX_h\delta)
        =0.
    \end{equation}
    Hence, by a direct computation, from \eqref{eq:diffEikonal} we deduce that 
    \begin{equation}
         0=  a^\alpha_{h}a^\beta_{h}
         (Y_\alpha\delta)(Y_iY_jY_\beta\delta)+ \tilde P,
    \end{equation}
    where $\tilde P$ is a suitable homogeneous polynomial of degree $2$ in the variables $Y_\alpha\delta$, $Y_\alpha Y_\beta\delta$, with smooth (or real-analytic) coefficients on $\mathcal{O}$.
    Therefore, the distribution $ a^\alpha_{h}a^\beta_{h}(Y_\alpha\delta)(Y_iY_jY_\beta\delta)$ is actually equal to the continuous function $\tilde P\left(\dots,Y_\alpha\delta,\dots,Y_{\alpha}Y_{\beta}\delta,\dots\right)$. 
    Then, substituting the latter expression in \eqref{eq:three_der}, we conclude the proof.  
\end{proof}

By using the Leibniz rule we obtain the following formula for all $m\geq 1$:
    \begin{equation}
        \diverg_{{\mu}/{\delta^{m-1}}}^k(\nabla\delta)=\sum_{j=0}^k c_{j,k,m} \frac{\diverg_\mu^{k-j}(\nabla\delta)}{\delta^j},
    \end{equation}    
    for explicit coefficients $c_{j,k,m}\in \R$.
Then we obtain the following generalization of \cref{lem:iter_div_poly}.

\begin{cor}
\label{cor:iter_div_poly_m}
   Let $M$ be a smooth (or real-analytic) \sr manifold, equipped with a smooth (or real-analytic) measure $\mu$. Let $Y_1,\ldots,Y_n$ be a local smooth (or real-analytic) frame for $TM$ defined on a neighbourhood $\mathcal{O}\subset M$. Then, for all $k,m \in \N$ with $m\geq 1$, there exists a polynomial function $P^k_m$ in $n+n^2+1$ variables, with smooth (or real-analytic) coefficients on $\mathcal{O}$, homogeneous of degree $k$, such that for any $C^2$ non-characteristic submanifold $S\subset M$ of codimension $m\geq 1$, it holds 
    \begin{equation}
    \label{eq:Pk_m}
        \diverg^k_{\mu/\delta^{m-1}}(\nabla\delta)=P^k_m\left(\dots,Y_\alpha\delta,\dots,Y_{\alpha}Y_{\beta}\delta,\dots,\tfrac{m-1}{\delta}\right),\qquad \text{on } \mathcal{O}  \cap U\setminus S,
    \end{equation}
    where in the variables $Y_\alpha\delta$, $Y_\alpha Y_\beta\delta$ the indices $\alpha,\beta$ run over the set $\{1,\dots,n\}$. If $S$ is a two-sided hypersurface, then \eqref{eq:Pk_m} for $m=1$ holds on the whole $U$, replacing $\delta$ with $\delta_{\mathrm{sign}}$.
\end{cor}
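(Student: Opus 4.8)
The plan is to derive the statement from \cref{lem:iter_div_poly} combined with the expansion
\begin{equation*}
\diverg^k_{\mu/\delta^{m-1}}(\nabla\delta)=\sum_{j=0}^k c_{j,k,m}\,\frac{\diverg^{k-j}_\mu(\nabla\delta)}{\delta^j}
\end{equation*}
recorded just above, whose coefficients $c_{j,k,m}\in\R$ come from the Leibniz rule. First I would fix $m$ and the frame $Y_1,\dots,Y_n$ and apply \cref{lem:iter_div_poly} to each summand: for every $0\le j\le k$ there is a polynomial $P^{k-j}$, homogeneous of degree $k-j$ with smooth (or real-analytic) coefficients on $\mathcal O$, such that $\diverg^{k-j}_\mu(\nabla\delta)=P^{k-j}(\dots,Y_\alpha\delta,\dots,Y_\alpha Y_\beta\delta,\dots)$ on $\mathcal O\cap U\setminus S$.

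The crux is then a homogeneity-matching computation. Introducing the extra variable $v:=\tfrac{m-1}{\delta}$, for $m\ge 2$ one rewrites $\delta^{-j}=(m-1)^{-j}v^{\,j}$, so that the $j$-th summand reads
\begin{equation*}
c_{j,k,m}\,\frac{P^{k-j}}{\delta^j}=\frac{c_{j,k,m}}{(m-1)^j}\,P^{k-j}\,v^{\,j}.
\end{equation*}
Assigning formal degree one to $v$, consistently with the degree-one grading of $Y_\alpha\delta$ and $Y_\alpha Y_\beta\delta$, this is homogeneous of degree $(k-j)+j=k$, with real constants multiplying the smooth (or real-analytic) coefficients of $P^{k-j}$. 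Summing over $j$ yields the desired homogeneous polynomial $P^k_m$ of degree $k$ in the $n+n^2+1$ variables $Y_\alpha\delta,\,Y_\alpha Y_\beta\delta,\,\tfrac{m-1}{\delta}$, which is exactly \eqref{eq:Pk_m}.

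It remains to dispose of the boundary cases. For $m=1$ the claim is precisely \cref{lem:iter_div_poly}: there $\mu/\delta^{m-1}=\mu$, the variable $v$ is identically zero, and the expansion degenerates to $\diverg^k_\mu(\nabla\delta)=P^k$, so $P^k_1=P^k$ does not involve the last slot. If $S$ is a two-sided hypersurface (hence $m=1$), the same identity holds on the whole $U$ with $\delta$ replaced by $\delta_{\mathrm{sign}}$, again directly by the corresponding assertion of \cref{lem:iter_div_poly}, and no new computation is needed.

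The only point demanding care is the degree bookkeeping in the crux step: one must verify that the factor $\delta^{-j}$ contributes exactly the degree $j$ needed to compensate the drop from $k$ to $k-j$ in $P^{k-j}$, and that the $(m-1)^{-j}$ denominators produced by the substitution are harmless. Both are immediate once $v$ is graded with degree one, and for each fixed $m\ge2$ the denominators are merely absorbed into the real coefficients of $P^k_m$. It is reassuring to confirm this on the first nontrivial case, where $\diverg^2_{\mu/\delta^{m-1}}(\nabla\delta)=P^2-2P^1v+\tfrac{m}{m-1}P^0v^2$, and all three terms are manifestly homogeneous of degree two in $Y_\alpha\delta,\,Y_\alpha Y_\beta\delta,\,v$.
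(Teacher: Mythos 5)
Your proposal is correct and follows essentially the same route as the paper, which likewise obtains the corollary by combining the Leibniz expansion $\diverg^k_{\mu/\delta^{m-1}}(\nabla\delta)=\sum_{j=0}^k c_{j,k,m}\,\delta^{-j}\diverg^{k-j}_\mu(\nabla\delta)$ with \cref{lem:iter_div_poly}; your degree bookkeeping for the variable $\tfrac{m-1}{\delta}$ and the check of the $k=2$ case merely make explicit what the paper leaves implicit.
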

\begin{rmk}
The coefficient $m-1$ in the last variable of \eqref{eq:Pk_m} is a useful notation to recall that, if $m=1$, there is no dependence on that variable.
\end{rmk}
\begin{rmk}[The left-invariant case]\label{rmk:invariance}
In case $M$ is a Lie group equipped with a left-invariant sub-Riemannian structure and a left-invariant measure, one can choose in \cref{lem:iter_div_poly} and \cref{cor:iter_div_poly_m} as $Y_1,\dots,Y_n$ a left-invariant global frame, adapted to the sub-Riemannian distribution. In this case the $P^k$'s (resp.\ the $P^k_m$) are polynomials with constant coefficients, canonically associated with the sub-Riemannian structure once a left-invariant frame is fixed.
\end{rmk} 

We now prove \cref{prop:forsedatogliere-intro}, of which we recall the statement for convenience. We refer to \cref{sec:invariance} for a definition of the Heisenberg group.

\begin{prop}
\label{prop:forsedatogliere}
Let $\mathbb{H}$ be the three-dimensional Heisenberg group, equipped with a left-invari\-ant measure $\mu$. Then, there exists polynomials $Q^k_{m}$ with real coefficients and in $5$ variables, such that for any $C^2$ non-characteristic submanifold $S\subset M$ with codimension $m \in \{1,2\}$ it holds
\begin{equation}
\label{eq:lasola}
\diverg^{k}_{\mu/\delta^{m-1}}(\nabla\delta)= Q^k_m\left(F_1,F_2,F_3,F_4,\tfrac{m-1}{\delta}\right),\qquad \text{on } U\setminus S,
\end{equation}
where, for any given left-invariant and oriented orthonormal frame $\{X_1,X_2\}$, and letting $X_0$ be the Reeb vector field, we define the functions $F_i : U\setminus S \to \R$ by
    \begin{align}
        F_1 & :=X_1X_1\delta+X_2X_2\delta,  & F_2 & :=-(X_2\delta)(X_1 X_0\delta)+ (X_1\delta)(X_2 X_0\delta),\\ 
        F_3 & :=X_0\delta,  & F_4 & :=X_0 X_0\delta.
    \end{align}
   (The functions $F_1,\dots,F_4$ do not depend on the choice of the frame.) 
 Furthermore, if $S$ is a two-sided surface, \eqref{eq:lasola} for $m=1$ holds on the whole $U$, replacing $\delta$ with $\delta_{\mathrm{sign}}$.
\end{prop}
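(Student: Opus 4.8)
The plan is to begin from the polynomial representation already established in \cref{cor:iter_div_poly_m} and then to eliminate, one block at a time, every derivative-variable that is not among $F_1,\dots,F_4$, using the algebraic structure of $\mathbb H$ and the Eikonal equation. I fix the left-invariant orthonormal horizontal fields $X_1,X_2$, completed by the Reeb field $X_0$, with Heisenberg bracket relations $[X_1,X_2]=X_0$ and $[X_1,X_0]=[X_2,X_0]=0$ (so $X_0$ is central). By \cref{cor:iter_div_poly_m} together with \cref{rmk:invariance}, $\diverg^{k}_{\mu/\delta^{m-1}}(\nabla\delta)$ is a polynomial with constant real coefficients in the variables $X_\alpha\delta$, $X_\alpha X_\beta\delta$ (with $\alpha,\beta\in\{0,1,2\}$) and in $\tfrac{m-1}{\delta}$. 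The first goal is therefore to rewrite each of these $3+9$ derivative-variables in terms of $X_1\delta,X_2\delta$ and $F_1,\dots,F_4,\tfrac{m-1}{\delta}$ only.

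To do so I would use the Eikonal equation $(X_1\delta)^2+(X_2\delta)^2=1$, valid on $U\setminus S$ by \cref{item:tubolarneigh3} of \cref{thm:tubolarneigh} (equation \eqref{eq:eikonal}), differentiating it once along $X_1,X_2,X_0$. Setting $v:=(X_1\delta,X_2\delta)$ and $H_{\alpha\beta}:=X_\alpha X_\beta\delta$ for $\alpha,\beta\in\{1,2\}$, the first two identities read $Hv=0$, while the third gives that $(X_1X_0\delta,X_2X_0\delta)$ is $g$-orthogonal to the unit vector $v$ (after using $X_0X_i\delta=X_iX_0\delta$). The bracket relations fix the antisymmetric part $X_1X_2\delta-X_2X_1\delta=X_0\delta=F_3$, so writing $H=S+\tfrac{F_3}{2}J$ with $S$ symmetric and $J$ the standard rotation, the conditions $Sv=-\tfrac{F_3}{2}Jv$ and $\mathrm{tr}\,S=H_{11}+H_{22}=F_1$ determine $S$ completely (in the orthonormal basis $\{v,Jv\}$ it reads $\left(\begin{smallmatrix}0&-F_3/2\\ -F_3/2&F_1\end{smallmatrix}\right)$), whence each horizontal $X_\alpha X_\beta\delta$ becomes a polynomial in $X_1\delta,X_2\delta,F_1,F_3$ with no denominators, the change of basis being orthogonal with entries $X_1\delta,X_2\delta$. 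The orthogonality of $(X_1X_0\delta,X_2X_0\delta)$ to $v$ forces $X_1X_0\delta=-F_2\,X_2\delta$ and $X_2X_0\delta=F_2\,X_1\delta$, while $X_0\delta=F_3$ and $X_0X_0\delta=F_4$ are immediate. Substituting everything, the iterated divergence equals a polynomial $\widetilde Q\big(X_1\delta,X_2\delta,F_1,F_2,F_3,F_4,\tfrac{m-1}{\delta}\big)$.

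The final step is to remove the residual dependence on $X_1\delta,X_2\delta$ by the rotational symmetry of $\mathbb H$. Rotating the oriented orthonormal frame $\{X_1,X_2\}$ by an angle $\phi$ is an isometry preserving $\mu$ and $X_0$; under it the pair $(X_1\delta,X_2\delta)$ rotates as a unit vector $R_\phi v$, whereas $F_1=\Delta\delta$, $F_2=g(\nabla\delta,J\nabla X_0\delta)$, $F_3=X_0\delta$, $F_4=X_0X_0\delta$ and the intrinsically defined divergence are all invariant. Since the construction of $\widetilde Q$ (the polynomial of \cref{cor:iter_div_poly_m} and the Step~2 reductions) uses only the bracket constants, the measure, and the Eikonal equation, all of which are unchanged by the rotation, the same identity $\diverg^{k}_{\mu/\delta^{m-1}}(\nabla\delta)=\widetilde Q\big(R_\phi v,F_1,\dots,F_4,\tfrac{m-1}{\delta}\big)$ holds for every $\phi$. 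Hence $\widetilde Q$ is independent of $\phi$; averaging over $\phi\in[0,2\pi)$ produces an $SO(2)$-invariant polynomial in $v$, thus a polynomial in $|v|^2=(X_1\delta)^2+(X_2\delta)^2$ and the $F_i$, which on the Eikonal locus $|v|^2=1$ reduces to the desired $Q^k_m(F_1,F_2,F_3,F_4,\tfrac{m-1}{\delta})$. For $m=1$ the last variable is absent, and the two-sided statement follows by replacing $\delta$ with $\delta_{\mathrm{sign}}$ and invoking \cref{cor:twoside}.

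I expect the main obstacle to be making the symmetrization step watertight, namely verifying the \emph{equivariance} of the whole construction: one must check that the frame rotation acts covariantly on $(X_1\delta,X_2\delta)$ and trivially on each $F_i$ and on the divergence, so that the frame-dependent polynomial identity of Step~2 transports to the rotated frame with the \emph{same} polynomial $\widetilde Q$ and the same $F_i$, legitimizing the averaging. A secondary point, already resolved by the adapted-basis computation, is confirming that the Step~2 substitutions introduce no denominators, so that $\widetilde Q$ is genuinely polynomial in $X_1\delta,X_2\delta,F_1,F_2,F_3,F_4,\tfrac{m-1}{\delta}$.
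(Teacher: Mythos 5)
Your proposal is correct, and it rests on the same pillars as the paper's proof: \cref{cor:iter_div_poly_m} together with \cref{rmk:invariance}, the differentiated Eikonal equation, the commutator identity $X_1X_2\delta-X_2X_1\delta=X_0\delta$, and the $\mathrm{SO}(2)$ isometry group of $\mathbb{H}$. The only genuine difference is in how the rotational symmetry is cashed out. The paper records the invariance of the constant-coefficient polynomial $P^k_m$ under the simultaneous rotation of its vector and matrix arguments (equation \eqref{eq:rotation}), and then, at each point, applies the specific rotation $M$ sending $\xi=\nabla\delta$ to $e_1$: the constraints \eqref{eq:relations} force the transformed data into the normal form \eqref{eq:normal_form}, whose nonzero entries are exactly $\mathrm{Tr}(A)=F_1$, $(J\xi)^t v=F_2$, $z=F_3$, $\alpha=F_4$, and the conclusion is immediate. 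You instead first solve the linear constraints explicitly for every second derivative in terms of the unit vector $(X_1\delta,X_2\delta)$ and the $F_i$ (your adapted-basis computation of $S$ is correct and indeed denominator-free, since the change of basis is orthogonal with polynomial entries), and then eliminate the residual unit vector by averaging over the frame rotations and invoking that $\mathrm{SO}(2)$-invariant polynomials in a single plane vector are polynomials in its squared norm, which equals $1$ on the Eikonal locus. The equivariance you flag as the main obstacle is precisely the content of \eqref{eq:rotation} in the paper (the structure constants, the vanishing divergences of the left-invariant frame, and the Eikonal equation are all rotation-invariant, so the inductive construction of \cref{lem:iter_div_poly} produces the same polynomial in any oriented left-invariant orthonormal frame); it is routine to verify and both arguments rely on it. Net effect: your route trades the paper's pointwise normal form for an explicit elimination plus a Weyl-type averaging; both are sound, and the paper's version is marginally more economical since it never needs to reconstruct the individual entries of $(X_iX_j\delta)$.
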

\begin{proof}
Fix a left-invariant orthonormal oriented frame $\{X_1,X_2\}$ for $\mathbb{H}$ and let $X_0$ be the Reeb vector field (which is also left-invariant). Let $P^k_m$ be the polynomials of \cref{cor:iter_div_poly_m}. By \cref{rmk:invariance}, these are polynomials with constant real coefficients such that
\begin{equation}
\diverg^k_{\mu/\delta^{m-1}}(\nabla\delta)= P^k_m\left(\dots,X_\alpha\delta,\dots,X_{\alpha}X_\beta\delta\dots,\tfrac{m-1}{\delta}\right),
\end{equation}
where $\alpha,\beta=0,1,2$. We employ the shorthand $X_{\alpha\beta}:=X_\alpha X_\beta$. It is useful to introduce the following vectors and matrices:
\begin{equation}
\xi = X_i\delta, \quad z = X_0\delta,\quad A = X_i X_j \delta, \quad v = X_0 X_j \delta = X_jX_0\delta, \quad \alpha = X_0 X_0\delta,
\end{equation}
for $i,j=1,2$, so that the variables appearing in $P^k_m$ can be organized as follows:
    \begin{equation}
\left(            \begin{pmatrix}
                \xi\\
                z
            \end{pmatrix};
            \begin{pmatrix}
                A & v\\
                v^t & \alpha
            \end{pmatrix}; \frac{m-1}{\delta}
\right).
    \end{equation}
It is well-known that the group of isometries (fixing the origin) of $\mathbb{H}$ consists in all group automorphisms that act by orthogonal transformations on the distribution, \cite{Ursula,LD-Isometries}. We refer to \cite[Sec.\ 6]{LR-howmany} for the explicit description we employ here. It is isomorphic to $\mathrm{SO}(2)$, and any isometry is determined by its action on the distribution at the origin. Let $\phi_M : \mathbb{H}\to\mathbb{H}$ be such an isometry for some $M\in \mathrm{SO}(2)$. The corresponding action on a left-invariant oriented orthonormal frame $X_1,X_2$ is
\begin{equation}
(\phi_M)_* X_i = \sum_{j=1}^2 M_{ij} X_j,\qquad (\phi_M)_*  X_0 =  X_0.
\end{equation}
It is straightforward to show that the polynomials $P^k_m$ of \cref{cor:iter_div_poly_m} are invariant under the action of isometries of $\mathbb{H}$ in the following sense: for all $M\in \mathrm{SO}(2)$ it holds
\begin{equation}\label{eq:rotation}
P^k_m\left(
        \begin{pmatrix}
            M\xi\\
             z
        \end{pmatrix};
        \begin{pmatrix}
            M A M^t & Mv\\
            (Mv)^t &  \alpha
        \end{pmatrix}
        ; \frac{m-1}{\delta}\right)
        = 
        P^k_m\left(\begin{pmatrix}
                \xi\\
                z
            \end{pmatrix};
            \begin{pmatrix}
                A & v\\
                v^t & \alpha
            \end{pmatrix}; \frac{m-1}{\delta}
\right).
\end{equation}
Note that by the Eikonal equation $\|\xi\|^2=1$ and, by differentiating it, we find:
    \begin{equation}
    \label{eq:relations}
        A\xi=0\qquad\text{and}\qquad v^t\xi=0.
    \end{equation}
Furthermore we have
\begin{equation}\label{eq:symmetry}
A_{ij} - A_{ji} = [X_i,X_j]\delta = z J_{ij},\qquad \text{where}\qquad  J=\begin{pmatrix}
        0& 1\\
        -1 & 0
        \end{pmatrix}.
\end{equation}    
Thus, for fixed $p\in \mathbb{H}$, let $M \in \mathrm{SO}(2)$ be the unique orthogonal matrix such that $M\xi|_p = e_1$ and $M J\xi|_p = e_2$. Using relations \eqref{eq:relations}, we obtain that $Mv=(0, (J\xi)^tv)$ and  
    \begin{equation}
    \label{eq:normal_form}
        MAM^t=\begin{pmatrix}
            0 & z\\
            0 & (J\xi)^t A J\xi
        \end{pmatrix}=\begin{pmatrix}
            0 & z\\
            0 & \mathrm{Tr}(A)
        \end{pmatrix}.
    \end{equation}
By \eqref{eq:rotation} we deduce that there exist polynomials $Q_k^m$ with constant coefficients such that
\begin{equation}
\diverg^k_{\mu/\delta^{m-1}}(\nabla\delta) =Q_m^k\left(\mathrm{Tr}(A),(J\xi)^t v,z,\alpha,\tfrac{m-1}{\delta}\right),
\end{equation}
as functions on $U\setminus S$.
\end{proof}

The strategy of the proof of \cref{prop:forsedatogliere} can work, in principle, also for Heisenberg groups of a higher dimension. However, for $\mathbb{H}_{2d+1}$, the isometry group (fixing the identity, and -- for simplicity -- preserving the orientation) is $\mathrm{O}(2d)\cap \mathrm{Sp}(2d) \simeq \mathrm{U}(d)$. In particular, while $\mathrm{U}(1) \simeq \mathrm{SO}(2)$, in higher dimension this is no longer true, and in particular, the polynomials $P^k$ are no longer invariant by the action of the full (special) orthogonal group. Thus one cannot put the (symmetric part of) the matrix $A=(X_iX_j\delta)_{i,j=1}^{2d}$ in normal form, as done in \cref{prop:forsedatogliere}. This is one of the reasons for which the generalization of \cref{prop:forsedatogliere} to higher-dimensional Heisenberg groups seems difficult.

\section{Weyl's invariance for curves in the Heisenberg groups}
\label{sec:invariance}

In this section, we prove a suitable Weyl's invariance theorem for curves in Heisenberg groups. We briefly recall their definition: consider the Lie group $(\R^{2d+1},\star)$, where 
\begin{equation}
    (x,y,z)\star (x',y',z')=\left(x+x',y+y',z+z'+\frac{1}{2}(x\cdot y'-x'\cdot y)\right),
\end{equation}
for every $(x,y,z),(x',y',z')\in \R^d\times\R^d\times\R\cong\R^{2d+1}$. Then, the $(2d+1)$-dimensional Heisenberg group $\mathbb{H}_{2d+1}$ is defined by the \sr structure on $\R^{2d+1}$ given by the global generating frame $\{X_1,\ldots,X_{2d}\}$ of left-invariant vector fields:
\begin{equation}
X_{i}=\partial_{x_i}-\frac{y_i}{2}\partial_z,\qquad X_{d+i}=\partial_{y_i}+\frac{x_i}{2}\partial_z, \qquad i=1,\ldots,d .
\end{equation}
 The \sr metric $g$ is such that $X_1,\ldots,X_{2d}$ are orthonormal. Moreover, it holds
\begin{equation}
[X_i,X_j] = J_{ij} X_0, \qquad J:=\begin{pmatrix}
0 &  \mathbbm{1} \\
-\mathbbm{1} &  0 
\end{pmatrix}, \qquad \forall\, i,j=1,\dots,2d,
\end{equation}
where $X_0 = \partial_z$ is the \emph{Reeb vector field}. Note that $[X_0,X_j]=0$ for all $j=1,\dots,2d$.

On $\mathbb{H}_{2d+1}$, we also consider the canonical Riemannian extension of the \sr metric. Namely, we define the left-invariant Riemannian metric $g_R$ extending $g$ by declaring the Reeb vector field to be of norm $1$ and orthogonal to the Heisenberg distribution. 


We first observe that the \emph{Reeb angle} (see \cref{def:reebangle}) characterizes suitable equivalence classes of curves in the Heisenberg groups. 

\begin{lem}\label{lem:diffeo}
Let $\gamma_i:[0,L_i]\to \mathbb{H}_{2d+1}$, $i=1,2$, be non-characteristic $C^2$ curves, parametrized with unit Riemannian speed. Denote with $\Gamma_i \subset \mathbb{H}_{2d+1}$ the corresponding embedded submanifold. Then the following are equivalent:
\begin{enumerate}[(i)]
\item\label{item:diffeo1} $\Gamma_1$ and $\Gamma_2$ have the same Riemannian length and the same Reeb angle, i.e.\ $L_1=L_2=:L$, and 
 $\theta_{\Gamma_1}(\gamma_{1}(t))=\theta_{\Gamma_2}(\gamma_2(t))$ for all $t\in [0,L]$;
\item\label{item:diffeo2} there exists a diffeomorphism $\phi: \Gamma_1\to \Gamma_2$ such that, for all $q\in \Gamma_1$, there is a smooth sub-Riemannian isometry\footnote{A \emph{smooth sub-Riemannian isometry} is a diffeomorphism $\Phi: M \to M$ such that $\Phi_*$ is an orthogonal transformation on the sub-Riemannian distribution or, equivalently, the Hamiltonian satisfies $H\circ \Phi^* = H$.} $\Phi: \mathbb{H}_{2d+1} \to \mathbb{H}_{2d+1}$ such that $\phi_* = \Phi_*|_{T_q\Gamma_1}$.
\end{enumerate}
\end{lem}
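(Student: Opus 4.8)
The plan is to reduce both conditions to pointwise linear algebra for the tangent vector of a unit Riemannian speed curve, read in the global left-invariant orthonormal frame $\{X_1,\dots,X_{2d},X_0\}$. Writing $\dot\gamma_i(t)=h_i(t)+b_i(t)X_0$, where $h_i(t)$ is the horizontal component, the unit-speed condition is $\|h_i(t)\|^2+b_i(t)^2=1$, and for a one-dimensional submanifold the Reeb angle simplifies to $\theta_{\Gamma_i}(\gamma_i(t))=|b_i(t)|$. The non-characteristic hypothesis is exactly $T_{\gamma_i(t)}\Gamma_i\not\subset\mathcal{D}$, that is $b_i(t)\neq0$ for all $t$; by continuity of $b_i$ on the interval $[0,L_i]$ its sign is constant. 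Hence the Reeb angle lies in $(0,1]$ and, together with the unit-speed relation, determines the horizontal speed $\|h_i(t)\|=\sqrt{1-\theta_{\Gamma_i}(\gamma_i(t))^2}$.

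For the implication \ref{item:diffeo1}$\Rightarrow$\ref{item:diffeo2} I would set $\phi:=\gamma_2\circ\gamma_1^{-1}:\Gamma_1\to\Gamma_2$, a $C^2$ diffeomorphism with $\phi(\gamma_1(t))=\gamma_2(t)$ and $\phi_*\dot\gamma_1(t)=\dot\gamma_2(t)$. At a point $q=\gamma_1(t)$ I would produce the ambient isometry as a composition $\Phi=L_{\gamma_2(t)}\circ R\circ L_{\gamma_1(t)^{-1}}$, where $L_g$ denotes left translation, a sub-Riemannian isometry whose differential fixes the coordinates of any vector in the left-invariant frame, and $R$ is an origin-fixing isometry. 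In the left-invariant frame $\Phi_*$ acts on $\dot\gamma_1(t)$ by sending $h_1(t)\mapsto R_{\mathrm{hor}}h_1(t)$ and $b_1(t)X_0\mapsto\pm b_1(t)X_0$, the sign depending on whether $R$ preserves or reverses $X_0$. Since $|b_1|=|b_2|$ and each sign is constant, either $b_1\equiv b_2$, in which case I take $R\in\mathrm{U}(d)$, or $b_1\equiv-b_2$, in which case I take $R$ in the Reeb-reversing (orthogonal anti-symplectic) component of the isometry group, which is nonempty, for instance the automorphism $(x,y,z)\mapsto(y,x,-z)$ composed with $\mathrm{U}(d)$. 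As $\|h_1(t)\|=\|h_2(t)\|$, transitivity of $\mathrm{U}(d)$, and of its anti-symplectic coset, on the unit sphere $\mathbb{S}^{2d-1}\subset\R^{2d}$ lets me choose $R_{\mathrm{hor}}$ so that $R_{\mathrm{hor}}h_1(t)=h_2(t)$. Then $\Phi(\gamma_1(t))=\gamma_2(t)$ and $\Phi_*\dot\gamma_1(t)=\dot\gamma_2(t)=\phi_*\dot\gamma_1(t)$, giving $\phi_*=\Phi_*|_{T_q\Gamma_1}$.

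For the converse \ref{item:diffeo2}$\Rightarrow$\ref{item:diffeo1} I would first record two facts: every sub-Riemannian isometry is a Riemannian isometry for $g_R$, since it preserves the horizontal metric and sends $X_0$ to $\pm X_0$, and the Reeb angle is a sub-Riemannian isometry invariant, because $|g_R(\Phi_*W,X_0)|=|g_R(W,X_0)|$ and $\|\Phi_*W\|_{g_R}=\|W\|_{g_R}$. The hypothesis $\phi_*=\Phi_*|_{T_q\Gamma_1}$ forces $T_{\phi(q)}\Gamma_2=\Phi_*(T_q\Gamma_1)$ and $\|\phi_*W\|_{g_R}=\|W\|_{g_R}$ for $W\in T_q\Gamma_1$. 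Thus $\phi$ preserves Riemannian arclength, yielding $L_1=L_2=:L$, while the isometry invariance of the Reeb angle, applied through $\Phi$, gives $\theta_{\Gamma_2}(\phi(q))=\theta_{\Gamma_1}(q)$ for all $q$. Finally, an arclength-preserving diffeomorphism between the two arcs is, in arclength coordinates, either $t\mapsto t$ or $t\mapsto L-t$; after reversing the parametrization of $\gamma_2$ if necessary, which changes neither $\Gamma_2$ nor its Reeb angle, one obtains $\theta_{\Gamma_1}(\gamma_1(t))=\theta_{\Gamma_2}(\gamma_2(t))$ for all $t\in[0,L]$.

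The step I expect to be the crux is the tangent-matching construction in \ref{item:diffeo1}$\Rightarrow$\ref{item:diffeo2}: one must simultaneously match the horizontal parts, using transitivity of $\mathrm{U}(d)$ (or of its anti-symplectic coset) on $\mathbb{S}^{2d-1}$, and the Reeb components, which is where the distinction between the orientation-preserving and the Reeb-reversing isometries enters. The delicate preliminary points are that non-characteristicity makes $b_i$ nonvanishing with a globally constant sign, so that equality of the Reeb angles genuinely pins down both the horizontal norm and the Reeb component up to one global sign, and that the Reeb-reversing component of the isometry group both exists and still acts transitively on horizontal spheres. Everything else is bookkeeping in the left-invariant frame, relying on the fact that left translations preserve that frame.
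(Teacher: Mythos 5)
Your argument is correct and follows essentially the same route as the paper: define $\phi=\gamma_2\circ\gamma_1^{-1}$, split $\dot\gamma_i$ into horizontal and Reeb components, reduce to the group identity by left translation and match the tangent vectors with an origin-fixing isometry sending $X_0\mapsto\pm X_0$; conversely, use that sub-Riemannian isometries are $g_R$-isometries preserving the Reeb angle, so that $\phi$ preserves arclength and the Reeb angle, up to the same reversal of parametrization the paper also allows. The only real difference is that you make explicit the transitivity of $\mathrm{U}(d)$ and of its anti-symplectic coset on $\mathbb{S}^{2d-1}$, which the paper delegates to a reference on the structure of Heisenberg isometries.
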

\begin{rmk}
\cref{item:diffeo2} is weaker than asking that $\Gamma_1,\Gamma_2$ are diffeomorphic by a sub-Riemannian isometry. In fact, the isometry $\Phi$ can depend on $q\in \Gamma_1$.
\end{rmk}
\begin{proof}
We prove \ref{item:diffeo1} $\Rightarrow$ \ref{item:diffeo2}.
Define the diffeomorphism $\phi : \Gamma_1 \to \Gamma_2$ by $\phi = \gamma_2 \circ \gamma^{-1}_1$. Decompose
\begin{align}
\dot\gamma_i(t) = W_i|_{\gamma_i(t)} + \alpha_i(t) X_0|_{\gamma_i(t)}, \qquad i=1,2,
\end{align}
for some never-vanishing $\alpha_i : [0,L_i] \to \R$. Since the two curves have the same Riemannian length and Reeb angle  it holds $L_1=L_2=:L$ and $\alpha_1 = \pm \alpha_2$. It follows that $\|W_1\| = \|W_2\|$.

Fix $t\in [0,L]$. Up to applying a left-translation (which is an isometry), we may assume that $\gamma_1(t) = \gamma_2(t) = e$ is the Heisenberg group identity. Let $\Phi$ be a sub-Riemannian isometry such that $\Phi_*$ sends $W_1|_e$ to $W_2|_e$ and $X_0$ to  $\pm X_0$. (Such an isometry exists by the structure of Heisenberg isometries, see e.g. \cite[Sec.\ 6.1]{LR-howmany}.) Then, by construction
\begin{align}
\phi_* (\dot\gamma_1(t))  = \dot\gamma_2(t)  & = W_2|_{e} + \alpha_2(t) X_0|_{e}\\
&  = W_2|_{e} \pm \alpha_1(t)  X_0|_{e}\\
&  = \Phi_*\left( W_1|_{e} + \alpha_1(t)  X_0|_{e}\right)\\
& = \Phi_*(\dot\gamma_1(t)).
\end{align}

We prove \ref{item:diffeo2} $\Rightarrow$ \ref{item:diffeo1}. Define the diffeomorphism $\zeta:=\gamma_2^{-1}\circ\phi\circ \gamma_1:[0,L_1]\to [0,L_2]$. Observe that sub-Riemannian isometries of $\mathbb{H}_{2d+1}$ are also Riemannian isometries of the corresponding Riemannian extension. Using this fact and \cref{item:diffeo2} one easily sees that $\dot{\zeta} = \pm 1$. Without loss of generality we can assume that $\dot{\zeta}=1$ (otherwise, re-parametrize one of the two curves going backwards). Thus, $L_1=L_2=:L$ and $\phi\circ \gamma_1(t) = \gamma_2(t)$ for all $t\in [0,L]$. Furthermore, for all fixed $t\in [0,L]$, letting $\Phi$ be the isometry of \cref{item:diffeo2} at the point $q=\gamma_1(t)$, it holds $\phi_*\dot\gamma_1(t) = \Phi_*\dot\gamma_1(t) = \dot{\gamma}_2(t)$. Using \cref{def:reebangle} and the fact that $\Phi_*X_0=\pm X_0$, we see that
\begin{align}
\theta_{\Gamma_2}(\gamma_2(t)) & = |g_R(\dot\gamma_2(t), X_0|_{\gamma_2(t)})| \\
& = |g_R(\Phi_*\dot\gamma_1(t),\Phi_* X_0|_{\gamma_1(t)})| \\
& = |g_R(\dot\gamma_1(t), X_0|_{\gamma_1(t)})| \\
& = \theta_{\Gamma_1}(\gamma_1(t)).
\end{align}
Since $t$ is arbitrary, the proof is concluded.
\end{proof}

We now prove \cref{thm:invariance-intro}, which states that the volume of small sub-Riemannian tubes around a non-characteristic curve in $\mathbb{H}_{2d+1}$ depend only on the Reeb angle and the Riemannian length of the curve. We recall the statement for convenience.

\begin{thm}\label{thm:invariance}
Let $\gamma,\gamma':[0,L]\to \mathbb{H}_{2d+1}$, be non-characteristic $C^2$ curves, parametrized with unit Riemannian speed. Denote with $\Gamma,\Gamma' \subset \mathbb{H}_{2d+1}$ the corresponding embedded submanifold. Assume that $\theta_{\Gamma}(\gamma_t)=\theta_{\Gamma'}(\gamma'_t)$ for all $t\in [0,L]$. Then, there exists $\epsilon>0$ such that
\begin{equation}
\mu(T_{\Gamma}(r)) = \mu(T_{\Gamma'}(r)), \qquad \forall \, r \in[0,\epsilon),
\end{equation}
where $\mu$ denotes the Lebesgue measure of $\mathbb{H}_{2d+1}$.
\end{thm}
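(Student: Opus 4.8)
The plan is to invoke Weyl's tube formula (\cref{thm:weyl}) to reduce the claimed equality to an equality of all Taylor coefficients, and then to show that each coefficient depends on the curve only through its tangent line, i.e.\ through the Reeb angle. The two ingredients are the pointwise sub-Riemannian isometries furnished by \cref{lem:diffeo} and a symmetry argument over the normal sphere that exploits the one-dimensionality of the curve.

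First, a curve in $\mathbb{H}_{2d+1}$ is non-characteristic of codimension $m=2d$, so \cref{thm:weyl} gives
\[
\mu(T_\Gamma(r)) = \sum_{\substack{k\geq 2d\\ k-2d \text{ even}}} \frac{1}{k(k-2d)!}\left(\int_{A^1\Gamma} w_{2d}^{(k-2d)}\, \de\sigma_{2d}\right) r^k ,
\]
and an identical expansion for $\Gamma'$; it thus suffices to match the coefficients. Adopting the coordinates of the proof of \cref{thm:weyl}, with $x$ the unit-speed parameter on the curve and $u\in\mathbb{S}^{2d-1}$ on the fibre, the coefficient of $r^k$ is a multiple of
\[
\int_0^L \left(\int_{\mathbb{S}^{2d-1}} \left.\frac{\partial^{\,k-2d}}{\partial\varrho^{\,k-2d}}\right|_{\varrho=0}\!\! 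J(x,\varrho u)\, f(E(x,\varrho u))\,\de u\right)\de x ,
\]
so it is enough to show that, for every $x=t$ and every order of $\varrho$-derivative, the inner fibre integral coincides for $\Gamma$ and $\Gamma'$ at the matched parameter.

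Second, I would use that the curve is one-dimensional. As a \emph{point}, $E(x,\varrho u)=\pi\circ\Phi(\varrho,\gamma(x),\sum_i u_i\nu_i(x))$ depends only on $\gamma(x)$ and on the frame $\nu_i(x)$ of the annihilator, hence only on the first-order jet of the curve; so $f(E(x,\varrho u))$ and all the columns $\partial_{u_j}E$ are insensitive to $\ddot\gamma(x)$. The second-order jet enters solely through $\partial_x E$, which depends linearly on $\ddot\gamma(x)$; since $x$ is a single variable this is one column of the Jacobian determinant, so $J(x,\varrho u)$ — and therefore the whole integrand $\Theta=Jf$ — is \emph{affine} in $\ddot\gamma(x)$. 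Writing $\partial_\varrho^{\,j}\Theta|_{\varrho=0}=A_j(x,u)+\langle B_j(x,u),\ddot\gamma(x)\rangle$, the term $A_j$ depends only on the first-order jet. By \cref{lem:diffeo} (its implication (i)$\Rightarrow$(ii)), at each $t$ there is a sub-Riemannian isometry carrying the first-order jet of $\Gamma$ at $\gamma_t$ to that of $\Gamma'$ at $\gamma'_t$; as such isometries preserve $\mu$ and the Hamiltonian, hence the normal exponential map and its Jacobian, the contribution $\int_{\mathbb{S}^{2d-1}} A_j\,\de u$ is automatically matched.

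The heart of the proof — and the step I expect to be the main obstacle — is to show that the linear contribution vanishes upon fibre integration, namely $\int_{\mathbb{S}^{2d-1}} B_j(x,u)\,\de u=0$ for all $j$. This is the sub-Riemannian analogue of the classical fact underlying Hotelling's theorem, that the extrinsic curvature of a curve averages to zero over the normal sphere. The goal is to exhibit an involution of the fibre under which $B_j$ is odd. The natural candidate is the antipodal map $u\mapsto -u$, which corresponds to $\lambda\mapsto-\lambda$ on covectors; but Heisenberg geodesics are helices whose twist reverses under $\lambda\mapsto-\lambda$, so one likely has to compose the antipodal map with a Reeb-reflection isometry ($X_0\mapsto -X_0$) to restore a genuine symmetry of the exponential map. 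The components of $\ddot\gamma$ inside the $\mathrm{U}(d-1)$-summand of $\dot\gamma^{\perp}$ can in fact be flipped directly by an isometry in the stabilizer of $\dot\gamma$ (for instance $-\mathrm{id}$ on a complex line), which forces the corresponding part of $\int B_j\,\de u$ to vanish; the components in the two directions frozen by the stabilizer genuinely require the parity analysis of the Heisenberg normal exponential map, and this is the novel symmetry argument. Once $\int B_j\,\de u=0$ is established, each fibre integral depends only on the tangent line, the matched first-order contributions integrate to equal functions of $t$ over $[0,L]$, and summing in $k$ yields $\mu(T_\Gamma(r))=\mu(T_{\Gamma'}(r))$ for all sufficiently small $r$.
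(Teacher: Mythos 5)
Your overall architecture matches the paper's: reduce to a fiberwise statement over the curve, use the pointwise isometries of \cref{lem:diffeo} to match everything that depends only on the first-order jet, and kill the residual term (linear in $\ddot\gamma$, i.e.\ in the turning of the annihilator plane) by a symmetry of the normal fibre. You also correctly identify that the one-dimensionality of $\Gamma$ is what makes the residual term \emph{linear} rather than polynomial in the second-order data. However, the decisive step --- showing that the curvature contribution integrates to zero over the fibre, your $\int_{\mathbb{S}^{2d-1}} B_j\,\de u=0$ --- is not proven: you name it as ``the main obstacle'' and ``the novel symmetry argument,'' list candidate symmetries (antipodal map, composition with a Reeb reflection, the $\mathrm{U}(d-1)$ stabilizer of $\dot\gamma$), and then assume the conclusion. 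Since this is precisely the new idea of the theorem, the proposal as written has a genuine gap.

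For comparison, the paper resolves this step with the \emph{plain} antipodal map $I(\lambda)=-\lambda$ on the cotangent fibre, with no Reeb reflection needed. The point is that one does not need a symmetry of the geodesics themselves (your worry about the helix twist reversing under $\lambda\mapsto-\lambda$ is about the wrong object): one only needs the parity of the density $E^*\mu$ \emph{restricted to a single fibre} $T_q^*M$, and there the explicit Jacobian of the Heisenberg exponential, $J(p_x,p_z)=\tfrac{2^{2d}}{p_z^{2d}}\|p_x\|^2\sin(\tfrac{p_z}{2})^{2d-1}\bigl(\sin(\tfrac{p_z}{2})-\tfrac{p_z}{2}\cos(\tfrac{p_z}{2})\bigr)$, is an even function of $(p_x,p_z)$. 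Hence $I^*(E^*\mu)|_{T_q^*M}=(-1)^n(E^*\mu)|_{T_q^*M}$, while $I$ restricted to the $(n-1)$-dimensional fibre is $(-1)^{n-1}$-orientation-preserving and fixes the vertical correction field ($I_*\zeta=\zeta$); combining the two signs forces the fibre integral to equal its own negative. If you carry out the analogous parity computation for your $B_j(x,u)$ under $u\mapsto -u$ (checking how the single $\ddot\gamma$-dependent column of the Jacobian and the remaining columns transform), your route should close; a minor further point is that $\partial_x\nu_i$ also contains a gauge rotation of the frame within the annihilator plane, not only $\ddot\gamma$, so the affine decomposition should be stated with respect to both. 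Your preliminary reduction via the Taylor coefficients of \cref{thm:weyl} is harmless but unnecessary: the paper works directly with the exact fibre integrals $\mu(T_\Gamma(r))=\int_\Gamma\mu_\Gamma^r$ for each fixed $r$.
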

\begin{proof}
For $q\in \Gamma$ and $V\in T_q \Gamma$, we use the symbol $\tilde{V}$ to denote a lift on $\ann \Gamma$, namely a smooth vector field on $\ann \Gamma$, such that $\pi_* \tilde{V}|_\lambda = V$ for all $\lambda \in \pi^{-1}(q)\cap AS$. Also, set $M= \mathbb{H}_{2d+1}$ so that $n=\dim M = 2d+1$. We denote with a slight abuse of notation $E:T^*M \to M$ the global exponential map, defined by the same formula \eqref{eq:normal_exponential_map}. (In the rest of the paper, this notation was employed for the restriction to $A\Gamma$. No confusion should arise.)

For $V\in T_q\Gamma$, we define a $(n-1)$-form on the $(n-1)$-dimensional submanifold $\ann_q\Gamma\subset \ann \Gamma$ (the fiber at $q$) by the position
\begin{equation}\label{eq:definitionoffiberdensity}
\xi_1,\dots,\xi_{n-1}\mapsto \iota_{\tilde{V}}(E^*\mu)(\xi
_1,\dots,\xi_{n-1}):=(E^*\mu)(\tilde{V},\xi_1,\dots,\xi_{n-1}),
\end{equation}
for all $\xi_1,\dots,\xi_{n-1}\in T_{\lambda}(\ann_q\Gamma)$, and all $\lambda \in \ann_q\Gamma$. The definition is well-posed in the sense that it does not depend on the choice of the lifts, but only on the original $V$.

We need to fix orientations. Let $\epsilon =\min\{r_0(\Gamma),r_0(\Gamma')\}$. The standard orientation on $M=\mathbb{H}_{2d+1}$ induces a unique orientation on $A\Gamma$ such that the map $E$ is an orientation-preserving diffeomorphism when restricted on $A\Gamma \cap \{\sqrt{2H}<\epsilon\}$. Also, fix an orientation on $\Gamma$ in such a way that the embedding $\gamma : [0,L] \to \Gamma$ is orientation-preserving. Let $\psi: U\times \R^{n-1} \to \pi^{-1}(U)\subset A\Gamma$ be an orientation-preserving trivialization (where on $\R^{n-1}$ we choose the standard orientation and on $U\subset \Gamma$ the induced orientation). This induces also an orientation on each fiber $A_q \Gamma$ in such a way that $\psi|_{A_q \Gamma} : \R^{n-1} \to A_q \Gamma$ is an orientation-preserving diffeomorphism. The same choices are done for $\Gamma'$. The integrations below are carried out using these choices.

The integration of the $(n-1)$-form \eqref{eq:definitionoffiberdensity} on the disk of radius $r$ in $\ann_q\Gamma$ yields a one-parameter family of $1$-forms on $\Gamma$, denoted by $\mu_\Gamma^r$, defined as follows:
\begin{equation}\label{eq:defofform}
\mu_\Gamma^r|_q(V)=\int_{\ann_q\Gamma \cap \{\sqrt{2H}<r\}} \iota_{\tilde{V}}(E^*\mu),
\end{equation}
for all $V\in T_q \Gamma$, and all $q\in \Gamma$, $r\in [0,\epsilon)$, for $\epsilon$ sufficiently small. An application of the Fubini theorem with the above choice of orientations yields
\begin{equation}\label{eq:tuboemisure}
\mu(T_\Gamma(r)) = \int_\Gamma \mu_\Gamma^r, \qquad \forall \, r\in [0,\epsilon).
\end{equation}
Let $\phi:\Gamma\to \Gamma'$ be the diffeomorphism from \cref{item:diffeo2} of \cref{lem:diffeo}. We claim that $\phi^*\mu_{\Gamma'}^r = s_\phi \mu_\Gamma^r$ (where $s_\phi = \pm 1$ depending on the fact that $\phi$ is orientation-preserving or orientation-reversing). We now prove this claim, which implies the theorem, by \eqref{eq:tuboemisure}.


Fix $q\in \Gamma$ and set $q':=\phi(q)\in \Gamma'$. Let $\Phi:M\to M$ be the isometry such that $\Phi_*|_{T_q \Gamma} = \phi_*:T_q \Gamma \to T_{q'}\Gamma'$, which exists by \cref{lem:diffeo}. Note that $\Phi^{{-1}*}:T^*M\to T^*M$ (the pull-back of the inverse) restricts to a diffeomorphism between the corresponding annihilators $\ann_q\Gamma$ onto $A_{q'}\Gamma'$, and it preserves the Hamiltonian. In particular
\begin{equation}\label{eq:mapdomain}
A_{q'}\Gamma' \cap \{\sqrt{2H}<r\} = \Phi^{-1*} (A_{q}\Gamma \cap \{\sqrt{2H}<r\}),\qquad \forall\, r\in [0,\epsilon).
\end{equation}
Note that $\Phi: M\to M$ is a $s_{\Phi}$-orientation-preserving diffeomorphism, with $s_{\Phi} = \pm 1$ and recall also that $\phi : \Gamma\to \Gamma'$ is $s_{\phi}$-orientation-preserving. It follows that $\Phi^{{-1}*}|_{ A_q \Gamma} : A_q \Gamma \to A_{q'} \Gamma'$ is $(s_{\Phi} s_{\phi})$-orientation-preserving. Thus, for $V\in T_q \Gamma$, it holds
\begin{align}
(\phi^*\mu_{\Gamma'}^r)|_q(V)  & = \mu_{\Gamma'}^r|_{q'}(\phi_* V) \\
& = \mu_{\Gamma'}^r|_{q'}(\Phi_* V) \\
& = \int_{A_{q'}\Gamma' \cap \{\sqrt{2H}<r\}} \iota_{\widetilde{\Phi_*V}}(E^*\mu) &\text{by definition of $\mu_{\Gamma'}^r$} \\
& = \int_{\Phi^{{-1}*}(A_{q}\Gamma \cap \{\sqrt{2H}<r\})} \iota_{\widetilde{\Phi_*V}}(E^*\mu) & \text{by \eqref{eq:mapdomain}}\label{eq:lastine-1} \\
& = s_{\Phi} s_{\phi} \int_{A_{q}\Gamma \cap \{\sqrt{2H}<r\}} \Phi^{{-1}**}\iota_{\widetilde{\Phi_*V}}(E^*\mu). \label{eq:lastline}
\end{align}
For clarity, we stress that $\Phi^{{-1}*}:T^*M \to T^*M$, while $\Phi^{{-1}**}: T^*(T^*M)\to T^*(T^*M)$, and $(\Phi^{{-1}*})_*:T(T^*M) \to T(T^*M)$. Observe that $\Phi^{-1*}:T^*M\to T^*M$ is such that $\pi \circ \Phi^{-1*} = \Phi \circ \pi$ as maps on $T^*M$. It follows that
\begin{equation}
\pi_* \circ (\Phi^{-1*})_* \tilde{V} = \Phi_* V.
\end{equation}
If $(\Phi^{-1*})_* \tilde{V}$ were tangent to $\ann \Gamma'$, then it would be a smooth vector field on $\ann \Gamma'$ that lifts $\Phi_*V\in T_{q'}\Gamma'$, so that we could write, according to our notation:
\begin{equation}\label{eq:liftandisofalse}
(\Phi^{-1*})_* \tilde{V} = \widetilde{\Phi_*V}.
\end{equation}
However, even though $\Phi^{-1*}$ sends $\ann_q\Gamma$ to $A_{q'}\Gamma'$, it does not restrict to a map from $\ann \Gamma$ to $\ann \Gamma'$. Therefore \eqref{eq:liftandisofalse} is false in general but rather there exists a vector field $\zeta$ on $T^*M$ that is vertical (i.e.\ tangent to the fibers of $T^*M$, but possibly not to the fibers of $\ann \Gamma$), such that
\begin{equation}\label{eq:liftandiso}
(\Phi^{-1*})_* (\tilde{V} +\zeta) = \widetilde{\Phi_*V}.
\end{equation}
Therefore, for the density integrated in \eqref{eq:lastline} we have
\begin{align}
\Phi^{{-1}**}\iota_{\widetilde{\Phi_*V}}(E^*\mu) 
& = \iota_{\tilde{V}+\zeta}(\Phi^{{-1}**} \circ E^*\mu) & \text{by \eqref{eq:liftandiso}} \\
& = \iota_{\tilde{V}+\zeta}(E \circ \Phi^{-1*})^*\mu \\
& =  \iota_{\tilde{V}+\zeta}(\Phi \circ E )^*\mu &  \text{since $\Phi$ is an isometry}\\
& = \iota_{\tilde{V}+\zeta}(E^* \circ \Phi^*\mu) \\
&  = s_{\Phi} \iota_{\tilde{V}+\zeta}(E^* \mu). & \text{since $\mu$ is isometry-invariant} \label{eq:lastline2}
\end{align}
In the third line, note that, if $\Phi$ is a sub-Riemannian isometry, then $H\circ \Phi^* = H$, and it follows that $E\circ\Phi^{-1*}= \Phi \circ E$. From \eqref{eq:lastline} and \eqref{eq:lastline2}, we have proved that
\begin{align}
(\phi^*\mu_{\Gamma'}^r)|_q(V) -
 s_\phi \mu_\Gamma^r|_q(V) & = s_\phi \int_{A_q \Gamma\cap \{\sqrt{2H}<r\}} \left(\iota_{\tilde{V}+\zeta}(E^* \mu)-\iota_{\tilde{V}}(E^* \mu)\right) \\
& = s_\phi \int_{A_q \Gamma\cap \{\sqrt{2H}<r\}} \iota_{\zeta}(E^* \mu), \label{eq:vanish}
\end{align}
where we crucially used, in the second line, that $\Gamma$ has dimension one so that $\iota_{\tilde{V} +\zeta} = \iota_{\tilde{V}} + \iota_{\zeta}$. We will prove that the r.h.s.\ of \eqref{eq:vanish} vanishes by exploiting symmetries of $M=\mathbb{H}_{2d+1}$.

Let $I:T^*M \to T^*M$ be the map $I(\lambda) := -\lambda$. Recall the definition of $\zeta$ from \eqref{eq:liftandiso}:
\begin{equation}
\zeta = \tilde{V}- (\Phi^*)_*\widetilde{\Phi_* V}.
\end{equation}
Note that if $\tilde{V}$ is a lift of $V$ on $A\Gamma$ then since $I:A\Gamma \to A\Gamma$ is a bundle map we have that $I_*\tilde{V}$ is a lift of $V$ on $A\Gamma$ and thus according to our notation $I_* \tilde{V} = \tilde{V}$. Furthermore, it also holds $I \circ \Phi^* = \Phi^* \circ I$. It follows that 
\begin{equation}\label{eq:1tocheck}
I_* \zeta = \zeta.
\end{equation}
We claim that if $\mu$ is the $n$-form inducing the Lebesgue measure, the following identity holds:
\begin{equation}\label{eq:2tocheck}
I^*  (E^* \mu)|_{T_q^* M} = (-1)^{n} (E^*\mu)|_{T_q^* M}.
\end{equation}
(Note that $I=i^*$, where $i:M\to M$ is $i(x,y,z)=(-x,-y,-z)$. If $i$ were a sub-Riemannian isometry, then we would have $i\circ E = E\circ i^{-1*} = E\circ I$, and \eqref{eq:2tocheck} would follow immediately, without the need of restriction to $T_q^*M$. However this is not the case and we have to argue differently.) By left-invariance of $E$ and $\mu$ it is sufficient to prove \eqref{eq:2tocheck} at $q=e$ (the identity of the Heisenberg group). We use coordinates $(x,z) \in \R^{2d}\times \R$ on $ M$, and dual coordinates $(p_x,p_z) \in \R^{2d}\times \R$ on each fiber $T^*_e M$, in terms of which it holds
\begin{equation}
E^*\mu|_{T_e^* M} = J(p_x,p_z) dp_{x_1}\wedge \dots \wedge dp_{x_{2d}} \wedge dp_z,
\end{equation}
where, as computed e.g.\ in \cite[Lemma 15]{R-MCP}, $J(p_x,p_z)$ is the Jacobian determinant
\begin{equation}
J(p_x,p_z) = \frac{2^{2d}}{p_z^{2d}}\|p_x\|^2 \sin\left(\frac{p_z}{2}\right)^{2d-1}\left(\sin\left(\frac{p_z}{2}\right)-\frac{p_z}{2}\cos\left(\frac{p_z}{2}\right)\right).
\end{equation}
Since $I(p_x,p_z) = (-p_x,-p_z)$ and $(p_x,p_z)\mapsto J(p_x,p_z)$ is even, then \eqref{eq:2tocheck} follows.

Therefore, for any tuple of vector fields $\xi = (\xi_1,\dots,\xi_{n-1})$ on $A_q \Gamma$ it holds
\begin{align}
I^* (\iota_\zeta (E^*\mu))(\xi) & = (E^*\mu)(\zeta,I_*\xi)\\
& =(E^*\mu)(\iota_*\zeta,I_*\xi) & \text{by \eqref{eq:1tocheck}} \\
& = I^* (E^*\mu)(\zeta,\xi) \\
& = (-1)^n E^*\mu (\zeta,\xi) & \text{by \eqref{eq:2tocheck}} \\
& = (-1)^n (\iota_\zeta (E^*\mu))(\xi)\label{eq:verylastline}.
\end{align}
Note that, in the above equality, it is crucial that we are acting on vector fields tangent to the fibers $A_q \Gamma$ in order to use \eqref{eq:2tocheck}. Thus, since $\dim(A_q \Gamma)=n-1$, then $I|_{A_q \Gamma} :A_q \Gamma \to A_q \Gamma$ is $(-1)^{n-1}$-orientation preserving. Hence, we have
\begin{align}
\int_{A_q \Gamma\cap \{\sqrt{2H}<r\}} \iota_{\zeta}(E^* \mu) 
& = \int_{I(A_q \Gamma\cap \{\sqrt{2H}<r\})} \iota_{\zeta}(E^* \mu), \\
& = (-1)^{n-1}\int_{A_q \Gamma\cap \{\sqrt{2H}<r\}} I^* (\iota_{\zeta}(E^* \mu)), \\
& = -\int_{A_q \Gamma\cap \{\sqrt{2H}<r\}} \iota_{\zeta}(E^* \mu), &  \text{by \eqref{eq:verylastline}}.
\end{align}
This completes the proof that $\phi^*\mu_\Gamma^r = s_{\phi}\mu_{\Gamma'}^r$.
\end{proof}


\bibliographystyle{abbrv}
\bibliography{biblio.bib}
\end{document}